\theoremstyle{plain}
\newtheorem{theorem}{Theorem}[section]
\newtheorem{lemma}{Lemma}[section]
\newtheorem{proposition}{Proposition}[section]
\newtheorem{corollary}{Corollary}[section]
\theoremstyle{definition}
\newtheorem{definition}{Definition}[section]
\newtheorem{example}{Example}[section]
\newtheorem{remark}{Remark}[section]
\begin{document}

\begin{frontmatter}

\title{The generalizations of fuzzy monoids and vague monoids}
\author{Wei Li \fnref{label1}}
\author{Haohao Wang\fnref{label1}}
\author{Yuanhao Liu\fnref{label1}}
\author{Bin Yang\fnref{label1}\corref{cor1}}
\ead{binyang0906@whu.edu.cn, binyang0906@nwsuaf.edu.cn}
\address[label1]{College of Science, Northwest A \& F University, Yangling 712100, PR China}
\cortext[cor1]{Corresponding author.}

\begin{abstract}
In this paper, we present the fuzzy monoids and vague monoids by using aggregation operators. The unit interval with a $t$-norm or a $t$-conorm is a special monoid, so we mainly talk about fuzzy subsets of monoids. Firstly, the classification of fuzzy sets based on some special aggregation operators is discussed. At the same time, we give two basic propositions about submonoids of $t$-norm and $t$-conorm. The fuzzification by uninorm and nullnorm are denoted and some properties can be drawn. Next, we briefly present fuzzy subsets on lattice. Finally, the vague monoids on aggregation operators are redefined and further consider the special cases of uninorms and nullnorms.
\end{abstract}

\begin{keyword}
Uninorms; Nullnorms; $t$-norms; $t$-conorms; Aggregation functions; Fuzzy monoids; Vague monoids
\end{keyword}

\end{frontmatter}


\section{Introduction}
Aggregation operation is an important tool in fuzzy mathematics, which can fuse a given set of data into a representative value. There are many types of aggregation operations, which require different choices in different situations. In fuzzy logic, we usually use the triangular norms ($t$-norms for short) and the triangular conorms ($t$-conorms for short), as two kinds of fuzzy logic operations, which play a crucial role in fuzzy sets theory \cite{Zadeh1965Fuzzysets,Zimmermann1991FuzzySet}. To further enrich the properties of the aggregation operations, Yager and Rybalov \cite{Yager1996Uninormaggregation} proposed the concept of uninorm. The identity element of uninorm can take any number in the unit interval, not just zero and one in the case of $t$-norms and $t$-conorms. In addition, Calvo et al.\cite{Calvo2001Thefunctionalequations} introduced the notion of nullnorm in 2001.
Both of uninorms and nullnorms are a generalization of $t$-norms and $t$-conorms, and on the other side, there exists close connection between them.

The fuzzification or relaxation of logical connectives can be effective in solving practical problems such as imprecision, lack of accuracy, or the presence of noise. Considering that many scholars are very interested in fuzzified algebraic structures, including groups, rings, actions\cite{Boixader2018Fuzzyactions, RosenfeldFuzzy1971subgroups}.

Boixader and Recasens \cite{D.BoixaderVagueandfuzzy2021} fuzzified the concept of submonoid and further explored the fuzzy $t$-subnorm and fuzzy $t$-subconorm, which can deal with imprecision effectively. Refer to the method of fuzzy monoid in  \cite{D.BoixaderVagueandfuzzy2021},
 we will study the fuzzification of submonoid by generalizing the $t$-norm and $t$-conorm to aggregation operators, named $A$-fuzzy submonoid of set $M$, where $A$ is any aggregation operation.
 Further, the corresponding conclusions are given when the aggregation function $A$ takes the uninorm $U$ and nullnorm $F$.
 The second part of this paper, we explore the concept of $A$-vague binary operations and $A$-vague monoids. And then, further the homomorphic relationship between $A$-vague monoids and $A$-fuzzy monoids is constructed.



The rest of this paper is arranged as follows. In section \ref{Preliminaries}, we review some basic concepts and results related to aggregation functions, $t$-norms, $t$-conorms, uninorms, nullnorms and fuzzy monoids.
In section \ref{The generation of fuzzy submonoid}, we give the definition of fuzzy submonoid which generates by aggregation operators, and use it on uninorms and nullnorms.
 And then, we apply the concept of fuzzy submonoid to monoid $([0, 1],T)$ and $([0, 1], S)$, where $T$ and $S$ are $t$-norm and $t$-conorm respectively. Further, what we call $U$-fuzzy $t$-subnorms and $U$-fuzzy $t$-subconorms (resp. $F$-fuzzy $t$-subnorms and $F$-fuzzy $t$-subconorms) can be obtained later, and the above conclusions can be generalized to lattice.
In section \ref{Vague monoids}, the vague binary operators and vague monoids are introduced and the concept of homomorphic mapping between vague monoids and fuzzy monoids are given. Next,  we obtain the corresponding conclusions when aggregation function $A$ takes the uninorm $U$ and nullnorm $F$.
Finally, conclusions on our research are given in section \ref{Concluding remarks}.


\section{Preliminaries}\label{Preliminaries}
	In this section, we briefly recall some fundamental concepts and definitions about aggregation operations, such as $t$-norms, $t$-conorms, uninorms and nullnorms, which shall be needed in the sequel.
	\begin{definition}\label{aggregation function}
		A function $H: [0,1]^n\to [0,1]$ is a $n$-ary aggregation operation, which satisfies the following conditions:
		\begin{enumerate}[(1)]
			\item
			$A(0,0,\cdots, 0)=0$ and 	$A(1,1,\cdots, 1)=1$;
			\item
			If $x_i\le y$ for all $i\in\{1, 2,,\cdots,n\}$, then
			\begin{align*}
				A(x_1,\cdots,x_{i-1}, x_i, x_{i+1},\cdots, x_n)\le A(x_1,\cdots,x_{i-1}, y, x_{i+1},\cdots, x_n).
			\end{align*}
		\end{enumerate}
	\end{definition}

For more details about aggregate functions, please refer to  \cite{BeliakovAggregationFunctions2007,GomezAdiscussion2004, KomornikovaAggregationoperators2001}.

	\begin{definition}\label{d:t-norm}\cite{E.P.KlementTriangularNorms2000}
		A $t$-norm is a binary operation $T:[0,1]^2\to [0,1]$, for all $x,y,z\in [0,1]$, which satisfies the following conditions:
		\begin{enumerate}[(1)]
			\item
			$T(x,y)=T(y,x)$;
			\item
			$T(T(x,y),z)=T(x,T(y,z))$;
			\item
			$T$ is non-decreasing in each argument;
			\item
			$T(x,1)=x.$
		\end{enumerate}	 	
	\end{definition}	

\begin{example}\label{e:t-norm}\cite{E.P.KlementTriangularNorms2000}
	The following shows several common $t$-norms:\\
	\begin{itemize}
		\item
		Minimum: $T_M(x, y) = \textnormal{min}(x, y)$;
		\item
		Product: $T_P(x, y) = xy$;
		\item
		{\L}ukasiewicz $t$-norm: $T_L(x, y) = \textnormal{max}(x+y-1,0)$;
		\item
		Drastic product:
		$T_D(x,y) =\left\{
		\begin{aligned}
		&0&if\ x,y\in [0,1), \\
		&\textnormal{min}(x,y)&otherwise.
		\end{aligned}
		\right.$
	\end{itemize}
\end{example}

\begin{definition}\label{d:t-conorm}\cite{E.P.KlementTriangularNorms2000}
	A $t$-conorm is a binary operation $S:[0,1]^2\to [0,1]$, for all $x,y,z\in [0,1]$, which satisfies the following conditions:
	\begin{enumerate}[(1)]
		\item
		$S(x,y)=S(y,x)$;
		\item
		$S(S(x,y),z)=S(x,S(y,z))$;
		\item
		$S$ is non-decreasing in each argument;
		\item
		$S(x,0)=x.$	
	\end{enumerate}	
\end{definition}

\begin{example}\label{e:t-conorm}\cite{E.P.KlementTriangularNorms2000}
	The following shows several common $t$-conorms:\\
	\begin{itemize}
		\item
		Maximum: $S_M(x, y) = \textnormal{max}(x, y)$;
		\item
		Probabilistic sum: $S_P(x, y) = x+y-xy$;
		\item
		{\L}ukasiewicz $t$-conorm: $S_L(x, y) = \textnormal{min}(x+y,1)$;
		\item
		Drastic sum:
		$S_D(x,y) =\left\{
		\begin{aligned}
		&1&if\ x,y\in (0,1], \\
		&\textnormal{max}(x,y)&otherwise.
		\end{aligned}
		\right.$
	\end{itemize}
\end{example}

\begin{definition}\cite{E.P.KlementTriangularNorms2000}
	An element $x\in [0,1]$ is called an idempotent element for a $t$-norm $T$ (resp. $t$-conorm $S$)if and only if $T(x,x)=x$ (resp. $S(x,x)=x$).
	$0$ and $1$ which are idempotent elements for $T$ and $S$, respectively, called trivial idempotent elements of $T$ and $S$, the rest are non-trivial idempotent element of $T$ and $S$.
	The set of idempotent elements of $T$ (resp. $S$) is noted as $Ide(T)$ (resp. $Ide(S)$).
\end{definition}

\begin{remark}
	A continuous $t$-norm $T$ (resp. $t$-conorm $S$) is Archimedean if and only if $Ide(T) = \{0, 1\}$ (resp. $Ide(S) = \{0, 1\}$).
\end{remark}

\begin{example}
	Notice $T_L$, $T_P$ are Archimedean $t$-norm, and $T_M$ is not.
\end{example}


\begin{proposition}\label{p:additive generator}\cite{E.P.KlementTriangularNorms2000}
	A continuous $t$-norm $T$ is Archimedean if and only if there exists a continuous and strictly decreasing function $f : [0, 1] \to [0,\infty]$ with $f(1) = 0$ such that
	\begin{align*}
		T(x,y) = f^{[-1]}(f(x)+f(y)),
	\end{align*}
	where $f^{[-1]}$ is the pseudo inverse of $f$, denoted by
	$$f^{[-1]}(x)=\left\{
	\begin{aligned}
	 &f^{-1}(x), &if\ x\in[0,f(0)], \\
	 &0, &otherwise.
	\end{aligned}
	\right.$$
	$f$ is an additive generator of $T$ and two additive generators of the same $t$-norm differ only by a positive constant multiple.
\end{proposition}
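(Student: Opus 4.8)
The plan is to establish the two implications separately and then verify the uniqueness clause. Throughout, the computational workhorse is the elementary identity
$$f\bigl(f^{[-1]}(u)\bigr)=\min\bigl(u,f(0)\bigr),\qquad u\in[0,\infty],$$
which is immediate from the definition of the pseudo-inverse together with the fact that a continuous, strictly decreasing $f$ with $f(1)=0$ is a homeomorphism of $[0,1]$ onto $[0,f(0)]$. This single identity is what exposes the ``truncated addition'' hidden behind the formula.

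For the sufficiency direction I assume $f$ is given and set $T(x,y)=f^{[-1]}(f(x)+f(y))$. Commutativity is inherited from the symmetry of $+$; monotonicity holds because $f$ is decreasing while $f^{[-1]}$ is non-increasing, so their composition is non-decreasing in each argument; and the neutral-element law reads $T(x,1)=f^{[-1]}(f(x)+f(1))=f^{[-1]}(f(x))=x$ by $f(1)=0$. For associativity I would substitute the key identity into both $T(T(x,y),z)$ and $T(x,T(y,z))$ and check that each collapses to $f^{[-1]}\bigl(\min(f(x)+f(y)+f(z),f(0))\bigr)$, distinguishing only whether the partial sums exceed $f(0)$. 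Continuity of $T$ follows since $f^{[-1]}$ is continuous on $[0,\infty]$. Finally, to confirm that $T$ is Archimedean I would compute $Ide(T)$: the equation $T(x,x)=x$ becomes $f^{[-1]}(2f(x))=x$, and applying $f$ together with the identity yields $\min(2f(x),f(0))=f(x)$, forcing $f(x)=0$ or $f(x)=f(0)$, i.e. $x=1$ or $x=0$; by the Remark this is exactly the Archimedean property.

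The necessity direction is the substantial one. Given a continuous Archimedean $T$, I would first use the absence of non-trivial idempotents together with continuity to show $T(x,x)<x$ for all $x\in(0,1)$ and, by iteration, that the $T$-powers $x^{(n)}=T(x,T(x,\dots,x))$ strictly decrease to $0$. The target is an order-reversing homeomorphism converting $T$ into truncated addition. I would construct $f$ by assigning additive values to the dyadic $T$-powers of a fixed base point $a\in(0,1)$, exploiting the semigroup law to keep the assignment consistent, and then extend $f$ to all of $[0,1]$ by monotonicity and continuity; equivalently, one may invoke the Mostert--Shields structure theorem for continuous semigroups on a compact real interval, which identifies $([0,1],T)$ up to isomorphism with either $([0,1],T_P)$ or $([0,1],T_L)$, each carrying an obvious generator ($-\ln$ and $1-\mathrm{id}$, respectively). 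I expect the main obstacle to be precisely this construction: controlling the limiting process that defines $f$ off the dense set of powers and proving that it yields a strictly decreasing continuous function with $f(1)=0$ and no jumps, rather than merely a monotone assignment on a dense set.

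For uniqueness, suppose $f$ and $g$ both generate the same $T$. Setting $h=g\circ f^{-1}$ on $[0,f(0)]$ gives a continuous, strictly increasing map with $h(0)=g(1)=0$; comparing the two representations of $T(x,y)$ on the range where the sums stay below the truncation level shows that $h$ satisfies the Cauchy equation $h(u+v)=h(u)+h(v)$. A monotone solution of Cauchy's equation on an interval is linear, so $h(u)=c\,u$ with $c>0$, whence $g=c\,f$. This is exactly the claimed uniqueness up to a positive multiplicative constant.
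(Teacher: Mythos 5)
The paper itself offers no proof of this statement: it is a known representation theorem quoted from the cited Klement--Mesiar--Pap monograph, so your attempt has to be judged against the classical proof there rather than against anything in this paper. Your sufficiency direction is correct and essentially complete: the identity $f(f^{[-1]}(u))=\min(u,f(0))$, together with the observation that $f^{[-1]}(v)=f^{[-1]}(\min(v,f(0)))$, collapses both $T(T(x,y),z)$ and $T(x,T(y,z))$ to $f^{[-1]}(\min(f(x)+f(y)+f(z),f(0)))$, and your computation that $Ide(T)=\{0,1\}$ is right. The uniqueness argument is also sound, with one point you should make explicit: comparing the two representations gives $h(u+v)=\min(h(u)+h(v),g(0))$, and you must rule out the truncation being active when $u+v<f(0)$ (strict monotonicity of $h$ gives $h(u+v)<h(f(0))=g(0)$, which forces the pure Cauchy equation); after that, linearity of monotone solutions of Cauchy's equation finishes it.

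The genuine gap is the necessity direction, and it is the heart of the theorem. ``Assign additive values to dyadic $T$-powers and extend by monotonicity and continuity'' is a statement of the problem, not a proof: you would still need (i) existence of roots, i.e.\ surjectivity of $x\mapsto T(x,x)$ onto the relevant ranges, via continuity and the intermediate value theorem; (ii) consistency of the additive assignment across different dyadic decompositions, which is exactly where associativity must be exploited; and (iii) the proof that the resulting monotone function extends to a \emph{continuous, strictly decreasing} $f$ on all of $[0,1]$ for which the truncated-addition formula holds everywhere --- in the nilpotent case ($f(0)<\infty$), where powers reach $0$ in finitely many steps, this last step needs real care. Your alternative route, invoking the Mostert--Shields structure theorem, is legitimate but is less a proof than a citation: that theorem already contains the representation result, up to composing with the obvious generators $-\ln$ and $1-\mathrm{id}$ of $T_P$ and $T_L$. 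So as it stands the existence half is unproven; since the paper merely cites the result, your treatment is no less complete than the paper's, but it is the part any self-contained write-up would have to supply.
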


\begin{proposition}\label{p:additive generator}\cite{E.P.KlementTriangularNorms2000}
	A continuous $t$-conorm $S$ is Archimedean if and only if there exists a continuous and strictly decreasing function $g : [0, 1] \to [0,\infty]$ with $g(0) = 0$ such that
	\begin{align*}
	S(x,y) = g^{[-1]}(g(x)+g(y)),
	\end{align*}
	where $g^{[-1]}$ is the pseudo inverse of $g$, denoted by
	$$g^{[-1]}(x)=\left\{
	\begin{aligned}
	&g^{-1}(x), &if\ x\in[0,g(1)], \\
	&1, &otherwise.
	\end{aligned}
	\right.$$
	$g$ is an additive generator of $S$ and two additive generators of the same $t$-conorm differ only by a positive constant multiple.
\end{proposition}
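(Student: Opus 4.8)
The plan is to deduce this result from the preceding proposition on additive generators of $t$-norms by means of the standard order-reversing duality, rather than redoing the Acz\'el-type functional-equation argument from scratch. Fix the strong negation $N(x)=1-x$ and associate to the given $t$-conorm $S$ its dual operation $T(x,y)=1-S(1-x,1-y)$. First I would check that $T$ is a $t$-norm: commutativity and monotonicity are inherited directly, associativity follows by applying $N$ twice, and $T(x,1)=1-S(1-x,0)=1-(1-x)=x$ uses the boundary condition $S(x,0)=x$. Continuity of $T$ is immediate since $N$ is continuous. Moreover $z$ is idempotent for $S$ iff $1-z$ is idempotent for $T$, so $Ide(T)=\{1-z:z\in Ide(S)\}$; hence $Ide(S)=\{0,1\}$ exactly when $Ide(T)=\{0,1\}$, i.e. $S$ is Archimedean iff $T$ is.

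Granting this, for the forward direction I would apply the $t$-norm proposition to the continuous Archimedean $t$-norm $T$, obtaining a continuous strictly decreasing $f:[0,1]\to[0,\infty]$ with $f(1)=0$ and $T(x,y)=f^{[-1]}(f(x)+f(y))$. Then I set $g=f\circ N$, that is $g(x)=f(1-x)$. The required properties transfer mechanically: $g$ is continuous, $g(0)=f(1)=0$, and $g(1)=f(0)$; note that because both $f$ and $N$ are decreasing, $g$ is in fact strictly \emph{increasing}, which is what makes the stated codomain condition $g:[0,1]\to[0,\infty]$ with $g(0)=0$ consistent (the reader should read ``strictly increasing'' here, the generator of a $t$-conorm being the order-reversed image of that of its dual $t$-norm).

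The heart of the argument, and the step I expect to require the most care, is translating the representation of $T$ into one for $S$ through the pseudo-inverses. From $g(x)=f(1-x)$ one gets $g^{-1}(u)=1-f^{-1}(u)$ on the common domain, and $g(1)=f(0)$, so comparing the two-branch definitions yields $g^{[-1]}(u)=1-f^{[-1]}(u)$ for all $u$: on $[0,g(1)]=[0,f(0)]$ this is the inverse identity just noted, while for $u>f(0)$ both sides read $1$ (since $f^{[-1]}(u)=0$ there). With this identity in hand I compute
$$S(x,y)=1-T(1-x,1-y)=1-f^{[-1]}\big(f(1-x)+f(1-y)\big)=1-f^{[-1]}\big(g(x)+g(y)\big)=g^{[-1]}\big(g(x)+g(y)\big),$$
which is exactly the claimed formula, so $g$ is an additive generator of $S$.

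For the converse, I would start from an arbitrary continuous strictly monotone $g$ with $g(0)=0$ and the displayed formula, run the same duality backwards to produce $f=g\circ N$ and recover a continuous Archimedean $t$-norm $T$, and then transport its $t$-norm properties back to $S$; directly, commutativity and monotonicity are clear, associativity follows from additivity of $g(x)+g(y)$ inside $g^{[-1]}$, the boundary law $S(x,0)=g^{[-1]}(g(x))=x$ uses $g(0)=0$, and a short case analysis according to whether $2g(x)\le g(1)$ shows $S$ has no idempotent in $(0,1)$, whence $S$ is Archimedean. Finally, uniqueness up to a positive multiplicative constant is inherited: the assignment $f\mapsto f\circ N$ is a bijection between additive generators of $T$ and of $S$ that commutes with multiplication by constants, so the corresponding uniqueness clause for $T$ in the previous proposition immediately gives the one for $S$.
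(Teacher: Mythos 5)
Your proposal is correct, but there is nothing in the paper to compare it against: this proposition sits in the Preliminaries and is imported verbatim from the cited monograph of Klement, Mesiar and Pap \cite{E.P.KlementTriangularNorms2000}; the paper gives no proof of it (nor of its $t$-norm companion). Judged on its own merits, your duality argument is sound and is the natural self-contained route given that the $t$-norm version is stated immediately before: the dual $T(x,y)=1-S(1-x,1-y)$ is indeed a continuous $t$-norm, idempotents correspond under $z\mapsto 1-z$ so Archimedeanity transfers, the pseudo-inverse identity $g^{[-1]}(u)=1-f^{[-1]}(u)$ (using $g(1)=f(0)$) is verified correctly on both branches, and the uniqueness clause transfers through the bijection $f\mapsto f\circ N$, which commutes with positive scalar multiples. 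Two remarks. First, your observation that the statement's ``strictly decreasing'' must be read as ``strictly increasing'' is not pedantry but a genuine correction of a typo in the paper: a strictly decreasing $g:[0,1]\to[0,\infty]$ with $g(0)=0$ cannot exist, so the proposition is vacuously false as printed; the generator of a $t$-conorm is increasing, exactly as your construction $g=f\circ N$ produces. Second, in the converse direction you do slightly more than needed: since $S$ is assumed to be a continuous $t$-conorm, re-deriving commutativity, associativity and the boundary law from the formula is superfluous; only your case analysis on $2g(x)\le g(1)$ versus $2g(x)>g(1)$, showing $S(x,x)\neq x$ for $x\in(0,1)$, is required to conclude Archimedeanity. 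Note also that your forward direction is conditional on the preceding $t$-norm proposition, which the paper likewise cites without proof, so your argument is a reduction rather than a from-scratch proof --- a reasonable choice here.
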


\begin{definition}\label{def3}\cite{Yager1996Uninormaggregation}
	A uninorm is a binary function $U : [0, 1]^2 \rightarrow [0, 1]$, for all $x, y, z \in [0, 1]$, which satisfies the following conditions :
	\begin{enumerate}[(U1)]
		\item
		$U(x, y) = U(y, x)$;
		\item
		$U(U(x, y), z) = U (x,U(y, z))$;
		\item
    	$U$ is non-decreasing in each argument;
		\item
		$U(x, e) = x, e\in [0,1]$.	
	\end{enumerate}	
\end{definition}

\begin{remark}
	In particular, a uninorm is a $t$-norm when $e=1$ and a $t$-conorm when $e=0$. For any $e\in (0,1)$, uninorm is equivalent to a $t$-norm in $[0,e]^2$, a $t$-conorm in $[e,1]^2$, and its values in $A(e) = [0,e)\times (e,1] \cup (e,1] \times [0,e)$ have the following form:
	$$\textnormal{min}(x,y)\leqslant U(x,y)\leqslant \textnormal{max}(x,y).$$
\end{remark}

\begin{remark}
	A uninorm $U$ is called conjunctive if $U(1, 0)=0$ and disjunctive if $U(1,0)=1$. A conjunctive (resp. disjunctive) uninorm $U$ is said to be locally internal on the boundary if it
	satisfies $U (1, x) \in \{1, x\}$ (resp. $U(0, x) \in \{0, x\}$) for all $x \in [0,1]$.
	In addition, several common classes of uninorms are shown below.
    \begin{itemize}
	\item
	$\mathscr{U}_{min}$, $\mathscr{U}_{max}$: those given by minimum and maximum in $A(e)$, respectively.
	\item
	$\mathscr{U}_{ide}$: $U(x, x)=x$ for all $x \in [0, 1]$.
	\item
	$\mathscr{U}_{rep}$: those that have an additive generator.
	\item
	$\mathscr{U}_{cos}$: Continuous in the open square $(0,1)^2$.
   \end{itemize}
\end{remark}

\begin{theorem}\cite{Fodor1997Structure}
	Let $U:[0, 1]^2\rightarrow [0, 1]$ be a uninorm with identity element $e \in (0,1)$. Then the sections $x \hookrightarrow U(x, 1)$ and $x \hookrightarrow U(x, 0)$ are continuous in each point except perhaps for $e$ if and only if $U$ is given by one of the following formulas.
	\begin{enumerate}[(1)]
		\item
		If $U(0, 1)= 0$, then
		$$ U(x,y)=\left\{
		\begin{aligned}
		&eT(\frac{x}{e}, \frac{y}{e}), &if(x,y)\in [0,e]^2.\\
		&e+(1-e)S(\frac{x-e}{1-e},\frac{y-e}{1-e}), &if(x,y)\in [e,1]^2.\\
		&\textnormal{min}(x,y), &if(x,y)\in A(e).
		\end{aligned} \right. $$
		The set of uninorms as above will be denoted by $\mathscr{U}_{min}$.
		\item
		If $U(0, 1)=1$, then $\mathscr{U}_{max}$ has the same structure by changing minimum by maximum in $A(e)$.
	\end{enumerate}
\end{theorem}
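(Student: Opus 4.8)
The plan is to prove both implications, the reverse one being a direct computation and the forward one carrying all the content. For the ``if'' direction, observe that when $U$ is given by the displayed formula the two sections never probe the interior of $T$ or $S$, only their boundary behaviour. Indeed, for $x\ge e$ one gets $U(x,1)=e+(1-e)S\bigl(\tfrac{x-e}{1-e},1\bigr)=1$, while for $x<e$ one gets $U(x,1)=\min(x,1)=x$; thus $x\mapsto U(x,1)$ is the identity on $[0,e)$ and constant $1$ on $[e,1]$, continuous except for the jump at $e$. Symmetrically, $T(\cdot,0)\equiv 0$ and the minimum on $A(e)$ give $U(\cdot,0)\equiv 0$, which is continuous; the $\mathscr{U}_{max}$ case is dual. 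Note that no continuity of $T$ or $S$ is needed for this direction.

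For the ``only if'' direction I would first record the skeleton supplied by the preceding remark: rescaling $U$ on $[0,e]^2$ and on $[e,1]^2$ yields a $t$-norm $T$ and a $t$-conorm $S$, while on $A(e)$ one has $\min(x,y)\le U(x,y)\le\max(x,y)$. It then remains only to identify $U$ on $A(e)$. Writing $g(x)=U(x,0)$ and $h(x)=U(x,1)$, monotonicity with neutrality ($U(e,0)=0$, $U(e,1)=1$) gives $g(x)=0$ for $x\le e$ and $h(x)=1$ for $x\ge e$, and associativity gives the idempotences $g\circ g=g$ and $h\circ h=h$, so every value of $g$ and of $h$ is a fixed point. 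Since the fixed points of $g$ lie in $\{0\}\cup(e,1]$ and those of $h$ in $[0,e)\cup\{1\}$, and since $a:=U(0,1)=U(1,0)=h(0)=g(1)$ is a common fixed point, one obtains $a\in\{0,1\}$ \emph{without} appeal to continuity; this is the conjunctive/disjunctive dichotomy.

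The heart of the matter is the conjunctive case $a=0$ (the case $a=1$ being dual). Here $g(1)=0$ forces $g\equiv 0$ by monotonicity, so it suffices to prove $h(x)=x$ on $[0,e)$: the squeeze $x=U(x,e)\le U(x,y)\le U(x,1)=h(x)$ then upgrades this at once to $U=\min$ on all of $A(e)$, and rescaling recovers the $\mathscr{U}_{min}$ formula. To pin down $h$ I would invoke continuity twice. First, continuity at each point of $[0,e)$ forbids $h$ from jumping up to $1$: if $x_2=\inf\{x<e:h(x)=1\}$ were strictly below $e$, then $h(x)<e$ for $x<x_2$ would force the left limit at $x_2$ to be $\le e<1=h(x_2)$, a discontinuity at a point other than $e$; hence $h(x)\in[x,e)$ throughout $[0,e)$. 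Second, since $h(x)\ge x$ we have $\sup h=e$, so the continuous nondecreasing $h$ maps $[0,e)$ onto the whole interval $[0,e)$; every point of $[0,e)$ is therefore attained, and being attained it is a fixed point, whence $h(x)=x$.

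I expect the main obstacle to be exactly this last pinning-down of $h$. Monotonicity, neutrality and idempotence on their own leave room for ``plateaus'' $h\equiv c$ on a nondegenerate subinterval, and it is precisely the continuity hypothesis---through the surjectivity onto $[0,e)$ that forces \emph{every} point to be a fixed point---that rules them out. Getting the fixed-point bookkeeping exactly right, and verifying that the jump at $e$ is the only admissible discontinuity, is where I would concentrate the care; the rescaling to $T$ and $S$ and the $A(e)$ squeeze are then routine.
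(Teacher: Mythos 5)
This statement is quoted in the paper as a background theorem from \cite{Fodor1997Structure}; the paper itself contains no proof of it, so there is no in-paper argument to compare yours against. Judged on its own merits, your proof is correct and follows what is essentially the classical structure-of-uninorms argument: the ``if'' direction by direct evaluation of the two sections (identity on $[0,e)$ and constantly $1$ on $[e,1]$ for $U(\cdot,1)$, identically $0$ for $U(\cdot,0)$ in the conjunctive case); the ``only if'' direction via the rescaled $t$-norm and $t$-conorm on $[0,e]^2$ and $[e,1]^2$, the idempotences $g\circ g=g$ and $h\circ h=h$ of the sections $g=U(\cdot,0)$, $h=U(\cdot,1)$ coming from associativity, the resulting dichotomy $U(0,1)\in\{0,1\}$ without any continuity hypothesis, and then the continuity argument showing $h$ maps $[0,e)$ onto $[0,e)$, so that every point of $[0,e)$ is a value and hence a fixed point of $h$, forcing $h(x)=x$ there; the squeeze $x=U(x,e)\le U(x,y)\le U(x,1)=h(x)$ then yields $U=\min$ on $A(e)$.

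Two small touch-ups are needed, neither of which is a gap. First, when you rule out $x_2=\inf\{x<e: h(x)=1\}<e$, you write that the left limit at $x_2$ is $\le e<1=h(x_2)$, which tacitly assumes the infimum is attained; if it is not, then $h(x_2)<e$ while monotonicity gives $h\equiv 1$ on $(x_2,e)$, so the right limit at $x_2$ equals $1$ and the discontinuity at $x_2\ne e$ arises from the right instead. Either way the contradiction stands. Second, your intermediate-value step (surjectivity of $h$ onto $[0,e)$) needs the lower endpoint $h(0)=U(0,1)=0$, which holds precisely because you are in the conjunctive case; this is available to you but should be invoked explicitly, since $h(x)\ge x$ alone only controls the supremum. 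With these clarifications, and the dual argument for the disjunctive case $U(0,1)=1$, the proof is complete.
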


\begin{theorem}\cite{D.Ruiz-AguileraSomeremarks2010}
	$U$ is an idempotent uninorm with identity element $e \in [0, 1]$ if and only if there exists a non-increasing function $g :[0, 1] \to [0, 1]$, symmetric with respect to the main diagonal, with $g(e) = e$, such that
	$$U(x,y)\!=\! \left\{
	\begin{aligned}
	&\textnormal{min}(x,y),
	&if\ y<g(x)\ or\ (y=g(x)\ and\ x<g(g(x))).\\
	&\textnormal{max}(x,y),
	&if\ y>g(x)\ or\ (y=g(x)\ and\ x>g(g(x))).\\
	&\textnormal{min}(x,y)\ or\ \textnormal{max}(x,y),
	&if\ y=g(x)\ and\ x=g(g(x)).\\
	\end{aligned} \right. $$
	\normalfont
	being commutative in the points $(x, y)$ such that $y = g(x)$ with $x = g(g(x))$.
\end{theorem}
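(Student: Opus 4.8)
The plan is to prove the two implications separately, as is standard for structural characterizations of this kind. For the \emph{necessity} direction I would first establish the key \emph{internality} property that every idempotent uninorm satisfies $U(x,y)\in\{x,y\}$ for all $x,y$. Beginning from the general uninorm sandwich $\min(x,y)\le U(x,y)\le\max(x,y)$, by commutativity it suffices to treat $x<y$ (the case $x=y$ being idempotency), and I would suppose for contradiction that $z:=U(x,y)$ satisfies $x<z<y$. Associativity and idempotency then give $U(x,z)=U(x,U(x,y))=U(U(x,x),y)=z$ and, symmetrically, $U(z,y)=z$. Splitting on the position of $z$ relative to $e$ yields a contradiction in every case: if $z<e$ the pair $(x,z)$ lies in the $t$-norm region where $U\le\min$, contradicting $U(x,z)=z>x$; if $z>e$ the pair $(z,y)$ lies in the $t$-conorm region where $U\ge\max$, contradicting $U(z,y)=z<y$; and $z=e$ forces $x=U(x,e)=e$, contradicting $x<e$. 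Hence $z\in\{x,y\}$.

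With internality in hand, I would construct $g$ by setting $g(x)=\sup\{y\in[0,1]:U(x,y)=\min(x,y)\}$, so that $g(x)$ is precisely the threshold separating the region where $U$ acts as the minimum from the region where it acts as the maximum. Monotonicity of $U$ guarantees that this threshold is well defined and that $U(x,y)=\min(x,y)$ for $y<g(x)$ while $U(x,y)=\max(x,y)$ for $y>g(x)$. It then remains to verify the three required properties of $g$: that $g(e)=e$ (immediate from $U(x,e)=x$, which places the min/max boundary exactly at $e$ along the line $x=e$), that $g$ is non-increasing (from the monotonicity of $U$, comparing its values across the threshold for two distinct first arguments), and that the graph of $g$ is symmetric about the main diagonal (a direct consequence of the commutativity of $U$ together with the symmetry of $\min$ and $\max$). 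The behaviour on the boundary set $\{(x,y):y=g(x)\}$, where $U$ may take either value, is then pinned down by the condition $x=g(g(x))$, which singles out the self-symmetric boundary points.

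For the \emph{sufficiency} direction I would take the stated formula as the definition of $U$ from a given non-increasing, diagonally symmetric $g$ with $g(e)=e$, and verify the uninorm axioms together with idempotency. Idempotency is checked directly on the diagonal $y=x$ by examining the three cases $x<g(x)$, $x>g(x)$, and $x=g(x)$, each of which returns the value $x$. The neutral element axiom $U(x,e)=x$ follows from $g(e)=e$ and the monotonicity of $g$, which force $(x,e)$ into the correct branch according to whether $x<e$ or $x>e$. Commutativity follows from the diagonal symmetry of $g$ together with the explicitly imposed commutativity at the self-symmetric boundary points, while monotonicity in each argument follows from $g$ being non-increasing, so that increasing $y$ can only move a point from the min branch to the max branch, never creating a downward jump.

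The principal obstacle in both directions is \emph{associativity}. In the sufficiency direction, verifying $U(U(x,y),z)=U(x,U(y,z))$ requires an extensive case analysis organised by the relative orderings of $x,y,z$ and their images under $g$, with particular care on the boundary set $y=g(x)$ where the value is not determined by the min/max dichotomy alone; here the diagonal symmetry and the fixed-point condition $x=g(g(x))$ are exactly what is needed to make the boundary cases mutually consistent. In the necessity direction the analogous difficulty reappears as the careful bookkeeping required to show that $g$ is genuinely non-increasing and diagonally symmetric on the boundary, and not merely on the interiors of the two regions. I expect this boundary analysis --- reconciling the indeterminate $\min$-or-$\max$ values with associativity --- to be the most delicate part of the argument.
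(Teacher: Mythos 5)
This theorem appears in the paper's preliminaries as a quoted result of Ruiz--Aguilera, Torrens, De Baets and Fodor, and the paper gives no proof of it, so your attempt cannot be compared with a ``paper proof''; it has to stand on its own. Your architecture (internality lemma, construction of $g$, axiom verification, associativity as the crux) is the right one, and your internality argument is essentially correct --- though note that $\min(x,y)\le U(x,y)\le\max(x,y)$ is \emph{not} a ``general uninorm sandwich'' (it fails on $[0,e]^2$ and $[e,1]^2$ for non-idempotent uninorms); here it follows from idempotency and monotonicity, since $U(\min,\min)\le U(x,y)\le U(\max,\max)$. The genuine gap in your necessity direction is the claim that ``monotonicity of $U$ guarantees'' that $U=\min$ below $g(x)$ and $U=\max$ above $g(x)$. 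That is false as stated: monotonicity plus internality of a single section $y\mapsto U(x,y)$ is perfectly compatible with a min--max--min--max pattern, e.g.\ $U(0.4,y)=y$ on $[0,0.1)$, $=0.4$ on $(0.1,0.9)$, $=y$ on $(0.9,1]$, which is non-decreasing, internal, idempotent at $y=0.4$, and even satisfies $U(0.4,e)=0.4$ for $e=0.5$; for this section your $g(0.4)=0.9$, yet $U(0.4,0.2)=\max\ne\min$. What excludes such sections is an interplay \emph{across} sections: commutativity gives $U(0.2,0.4)=0.4$, neutrality gives $U(0.2,0.5)=0.2$, and these contradict monotonicity of $U(0.2,\cdot)$. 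So the single-threshold structure needs commutativity and the neutral element (or associativity), not monotonicity alone; as written, that step of your proof fails, although it is repairable.

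The second gap is in sufficiency: you never verify associativity, you only announce that it requires ``an extensive case analysis.'' But associativity is precisely the non-trivial content of that direction --- idempotency, commutativity, neutrality and monotonicity are immediate from the formula --- and the delicate freedom on the boundary set $\{(x,y): y=g(x),\ x=g(g(x))\}$, where $U$ may be either $\min$ or $\max$ subject only to commutativity, is exactly the reason the original characterization had to be corrected in the cited 2010 paper. Deferring this is deferring the proof of one whole implication. A concrete way to organize it: since the formula makes $U$ internal, both $U(U(x,y),z)$ and $U(x,U(y,z))$ lie in $\{x,y,z\}$, so associativity reduces to showing that the same element of the triple is selected on both sides, argued from the positions of the pairs relative to the graph of $g$ (using that $g$ is non-increasing and symmetric); the boundary cases then have to be checked against the commutativity constraint explicitly. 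Until that analysis is actually carried out --- and the necessity step above repaired --- the proposal is a plausible plan rather than a proof.
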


\begin{theorem}\cite{Fodor1997Structure}
	A binary operation $U:[0, 1]^2 \to [0, 1]$ is a representable uninorm if and only if there exists a continuous strictly increasing function $h:[0, 1] \to [-\infty,+\infty]$ with $h(0) = -\infty$, $h(e) = 0\ (e\in (0,1))$ and $h(1) = +\infty$ such that
	\begin{center}
		$U(x,y)=h^{-1}(h(x)+h(y)),$
	\end{center}
	for all $(x, y) \in [0, 1]^2 \setminus \{(0, 1),(1, 0)\}$ and $U(0, 1)=U(1,0)\in\{0,1\}$. The function $h$ is usually called an additive generator of $U$.
\end{theorem}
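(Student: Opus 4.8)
The plan is to prove both implications of this equivalence, treating the construction of $U$ from $h$ (sufficiency) as the routine half and the extraction of the boundary data of $h$ together with the delicate behaviour at the two excluded corners (necessity) as the substantive half. Throughout I would exploit that a representable uninorm is, by the definition of the class $\mathscr{U}_{rep}$, one admitting an additive generator, so the real content is to pin down that the generator $h$ is continuous, strictly increasing, and satisfies $h(0)=-\infty$, $h(e)=0$, $h(1)=+\infty$.

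First, for sufficiency, I would assume $h:[0,1]\to[-\infty,+\infty]$ continuous and strictly increasing with the stated boundary values, and define $U(x,y)=h^{-1}(h(x)+h(y))$ off the two corners, with $U(0,1)=U(1,0)$ set to a fixed element of $\{0,1\}$. Since $h$ is then a continuous increasing bijection of $[0,1]$ onto the extended line, $h^{-1}$ is defined on every value of $h(x)+h(y)$ except the indeterminate $(-\infty)+(+\infty)$. I would verify (U1)--(U4) directly: commutativity is immediate from the symmetry of addition and the shared corner value; monotonicity (U3) follows because $h$ and $h^{-1}$ are increasing and addition is non-decreasing in each slot; the identity axiom reduces to $U(x,e)=h^{-1}(h(x)+h(e))=h^{-1}(h(x))=x$ using $h(e)=0$; and associativity away from the corners is the telescoping identity $U(U(x,y),z)=h^{-1}(h(x)+h(y)+h(z))$, which is symmetric in $x,y,z$.

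For necessity, suppose $U$ is representable with generator $h$, which I may take continuous and strictly increasing. From $U(x,e)=x$ I get $h(x)+h(e)=h(x)$ for all $x$, forcing $h(e)=0$; since $0<e<1$ and $h$ is strictly increasing, this gives $h(0)<0<h(1)$. The boundary values must then be infinite: if $h(0)=a$ were finite then $a<0$, yet $U(0,0)=h^{-1}(2a)$ would require $2a\ge h(0)=a$, i.e.\ $a\ge 0$, a contradiction, so $h(0)=-\infty$; the symmetric computation on $U(1,1)=h^{-1}(2h(1))$ with $h(1)>0$ forces $h(1)=+\infty$. Hence $h$ has exactly the prescribed profile, and $U(x,y)=h^{-1}(h(x)+h(y))$ holds wherever $h(x)+h(y)$ is a well-defined extended real.

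The crux in both directions is the pair of corners $(0,1)$ and $(1,0)$, precisely the points where the generator yields the indeterminate form $(-\infty)+(+\infty)$; this is the step I expect to be the main obstacle. There the formula assigns no value, so $U(0,1)$ and $U(1,0)$ must be posited separately, and commutativity forces them equal; write $c=U(0,1)$. To see $c\in\{0,1\}$, I would invoke associativity against an interior $z\in(0,1)$: since $h(z)$ is finite, $U(1,z)=h^{-1}(h(1)+h(z))=h^{-1}(+\infty)=1$, whence $U(U(0,1),z)=U(0,U(1,z))=U(0,1)=c$, so $U(c,z)=c$ for every interior $z$. Were $c$ itself interior, the formula would give $h(c)+h(z)=h(c)$, i.e.\ $h(z)=0$ for all such $z$, contradicting the injectivity of $h$; hence $c\in\{0,1\}$, recovering the conjunctive ($c=0$) and disjunctive ($c=1$) alternatives of the earlier remark. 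Confirming that each of these two choices is in turn consistent with (U2) at every triple meeting a corner then closes the equivalence.
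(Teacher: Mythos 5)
This theorem is background material: the paper quotes it from the reference of Fodor, Yager and Rybalov and supplies no proof of its own, so there is no internal argument to compare yours against; I can only judge the proposal on its own merits. On those terms, your outline follows the standard route and its key step is correct: deriving $h(e)=0$ from the identity axiom, forcing $h(0)=-\infty$ and $h(1)=+\infty$ by evaluating the formula at $(0,0)$ and $(1,1)$ (note your argument implicitly uses $U(0,0)=0$, which is legitimate since $U(0,0)\le U(0,e)=0$ by monotonicity), and, most importantly, the corner dichotomy: using associativity against an interior $z$, where $U(1,z)=h^{-1}(+\infty)=1$, to conclude $U(c,z)=c$ for all interior $z$ with $c=U(0,1)$, which would force $h(z)=0$ for all interior $z$ and contradict strict monotonicity unless $c\in\{0,1\}$. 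That is exactly the argument found in the literature for this characterization.

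Two caveats. First, in the sufficiency direction you explicitly defer the verification that either corner assignment $c\in\{0,1\}$ is compatible with associativity; this is where the genuine case analysis lives (triples meeting a corner, e.g. comparing $U(U(0,1),z)$ with $U(0,U(1,z))$ under the conjunctive and disjunctive choices), and a complete proof must carry it out --- it does go through for both choices, but it does not follow from the telescoping identity $U(U(x,y),z)=h^{-1}(h(x)+h(y)+h(z))$, which is only valid away from the indeterminate form $(-\infty)+(+\infty)$. Second, be aware that within this paper the statement is nearly definitional: the class $\mathscr{U}_{rep}$ was introduced earlier as ``those that have an additive generator,'' while the theorem itself is what defines the additive generator, so the equivalence is circular as framed here. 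Your reading --- take ``representable'' to mean ``admits a continuous, strictly increasing generator'' and then prove that the boundary profile $h(0)=-\infty$, $h(e)=0$, $h(1)=+\infty$ is forced --- is the sensible way to give the statement mathematical content, and your derivation of that profile is sound.
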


\begin{theorem}\cite{S.K.HuThestructure2001, D.RuizDistributivityand2006}
	Let $U$ be a uninorm continuous in $(0,1)^2$ with identity element $e\in (0,1)$. Then either one of the following cases is satisfied.
	
	(a) There exist $u \in [0, e]$, $\lambda \in [0,u]$, two continuous $t$-norms $T_1$ and $T_2$ and a representable uninorm $R$ such that $U$ can be represented as
	\begin{equation}\label{cosmin}
	U(x,y)\!=\! \left\{
	\begin{aligned}
	&\lambda T_1(\frac{x}{\lambda},\frac{y}{\lambda}),
	&if\ x,y\in [0,\lambda].\\
	&\lambda+(u-\lambda)T_2(\frac{x-\lambda}{u-\lambda},\frac{y-\lambda}{u-\lambda}),
	&if\ x,y\in [\lambda,u].\\
	&u+(1-u)R(\frac{x-u}{1-u},\frac{y-u}{1-u}),
	&if\ x,y\in [u,1].\\
	&1,
	&if\ \textnormal{min}(x,y)\in (\lambda,1]\ and\ \textnormal{max}(x,y)=1.\\
	&\lambda \ or\ 1 ,
	&if(x,y)\in\{(\lambda,1),(1,\lambda)\}.\\
	&\textnormal{min}(x,y),
	&elsewhere.
	\end{aligned} \right.
	\end{equation}
	
	(b) There exist $v \in (e, 1]$, $\omega \in [v,1]$, two continuous $t$-conorms $S_1$ and $S_2$ and a representable uninorm $R$ such that $U$ can be represented as
	\begin{equation}\label{cosmax}
	U(x,y)\!=\! \left\{
	\begin{aligned}
	&v R(\frac{x}{v},\frac{y}{v}),
	&if\ x,y\in [0,v].\\
	&v+(\omega-v)S_1(\frac{x-v}{\omega-v},\frac{y-v}{\omega-v}),
	&if\ x,y\in [v,\omega].\\
	&\omega+(1-\omega)S_2(\frac{x-\omega}{1-\omega},\frac{y-\omega}{1-\omega}),
	&if\ x,y\in [\omega,1].\\
	&0,
	&if\ \textnormal{max}(x,y)\in [0,\omega)\ and\ \textnormal{min}(x,y)=0.\\
	&\omega \ or\ 0,
	&if(x,y)\in\{(0,\omega),(\omega,0)\}.\\
	&\textnormal{max}(x,y),
	&elsewhere.
	\end{aligned} \right.
	\end{equation}
\end{theorem}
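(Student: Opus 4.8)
The plan is to reduce the statement to the classical structure theory of continuous $t$-norms and $t$-conorms (each an ordinal sum of Archimedean summands, which in turn carry additive generators by the Propositions above) together with the representation of representable uninorms quoted just before, and to control the behaviour on the boundary of the unit square through the one-variable sections $x\mapsto U(x,0)$ and $x\mapsto U(x,1)$. First I would use the neutral element $e$ to split the domain: axioms (U1)--(U4) make $U$ restrict to a commutative, associative, non-decreasing operation on $[0,e]^2$ with neutral element $e$, so the affine rescaling $x\mapsto x/e$ turns $U|_{[0,e]^2}$ into a genuine $t$-norm $T_U$, and the mirror rescaling of $[e,1]$ turns $U|_{[e,1]^2}$ into a $t$-conorm $S_U$. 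Continuity of $U$ on $(0,1)^2$, together with monotonicity and the boundary conditions, forces $T_U$ and $S_U$ to be continuous, hence ordinal sums of continuous Archimedean pieces. On the mixed region $A(e)$ the Remark already gives $\min(x,y)\le U(x,y)\le\max(x,y)$, so the only freedom left lies in $A(e)$ and on the edges of the square.

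Next I would establish the dichotomy. Writing $a:=U(0,1)=U(1,0)$, associativity gives $U(a,1)=U(U(0,1),1)=U(0,U(1,1))=U(0,1)=a$. If $a\le e$ then the $t$-norm annihilator yields $U(0,a)=0$, whence $0=U(0,a)=U(0,U(a,1))=U(U(0,a),1)=U(0,1)=a$, so $a=0$; dually $a\ge e$ forces $a=1$. Hence $U(0,1)\in\{0,1\}$, and the conjunctive case $U(0,1)=0$ will produce (a) while the disjunctive case $U(0,1)=1$ will produce (b). Since the order-reversing relabelling $x\mapsto 1-x$ carries a uninorm with neutral element $e$ to one with neutral element $1-e$, exchanging $t$-norms with $t$-conorms and $\min$ with $\max$, it sends family (b) to family (a); I would therefore treat only the conjunctive case and recover (b) by this duality.

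Assuming $U(0,1)=0$, I would read off the parameters from the top section $x\mapsto U(x,1)$, which is non-decreasing, equal to $x$ on an initial interval and equal to $1$ on a final interval, so that $\lambda:=\sup\{x:U(x,1)=x\}$ is well defined. Let $u$ be the bottom of the maximal block straddling $e$ on which $U$ is Archimedean on both sides of $e$; one checks $\lambda\le u\le e$ and that $\lambda$ and $u$ are idempotent elements of $U$. On $[u,1]^2$ the operation is a uninorm with neutral element $e$ whose additive generator diverges at both ends, so the representation theorem above identifies it, after rescaling, with a representable uninorm $R$. Below $u$ the continuous $t$-norm $T_U$ restricted to $[0,u]$ is cut by the idempotent $\lambda$ into its two ordinal-sum halves, the reparametrised $t$-norms $T_1$ on $[0,\lambda]^2$ and $T_2$ on $[\lambda,u]^2$. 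Finally I would verify the edge rules directly: associativity forces $U=1$ whenever $\min(x,y)\in(\lambda,1]$ and $\max(x,y)=1$, leaves the value at the two corner points $(\lambda,1),(1,\lambda)$ unconstrained (hence the option ``$\lambda$ or $1$''), and pins $U=\min$ on the remaining points, which is exactly \eqref{cosmin}.

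The main obstacle I anticipate is the middle claim that the block straddling $e$ is genuinely representable, rather than merely continuous and Archimedean on each side of $e$. This amounts to proving that continuity of $U$ across the seam $\{x=e\}\cap(0,1)^2$ forces the additive generator of the top of $T_U$ and that of the bottom of $S_U$ to glue into a single strictly increasing $h$ with $h(u)=-\infty$, $h(e)=0$ and $h(1)=+\infty$; the divergence at $u$, rather than a finite value, is the delicate point, since it is what separates the representable core from an ordinary ordinal-sum summand and thereby fixes the location of $u$. A secondary difficulty is the bookkeeping of the idempotent set of $U$ needed to show that $\lambda$ and $u$ are well defined and that no mixed configuration (a representable core flanked by nontrivial summands on \emph{both} sides at once) can occur, so that every uninorm continuous in $(0,1)^2$ falls into exactly one of the two mirror families.
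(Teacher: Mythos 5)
This theorem is not proved in the paper at all: it is quoted as a preliminary from Hu--Li (2001) and Ruiz--Torrens (2006), so your proposal has to be measured against the arguments in those references rather than against anything in this manuscript. Your opening moves are sound (rescaling $U$ on $[0,e]^2$ and $[e,1]^2$ to a continuous $t$-norm and $t$-conorm, the proof that $U(0,1)\in\{0,1\}$, the $x\mapsto 1-x$ duality), but the pivot on which your whole plan rests --- ``the conjunctive case $U(0,1)=0$ will produce (a) while the disjunctive case $U(0,1)=1$ will produce (b)'' --- is false, and this is a genuine gap rather than a technicality. The (a)/(b) dichotomy does not coincide with conjunctive/disjunctive. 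Concretely, let $R$ be a conjunctive representable uninorm with neutral element $\frac{1}{2}$ (for instance $R(x,y)=\frac{xy}{xy+(1-x)(1-y)}$ with $R(0,1)=R(1,0)=0$), and define $U(x,y)=\frac{3}{4}R\bigl(\frac{4x}{3},\frac{4y}{3}\bigr)$ for $x,y\in[0,\frac{3}{4}]$, $U(x,y)=\max(x,y)$ for all remaining pairs with $\min(x,y)>0$, and $U(x,y)=0$ whenever $\min(x,y)=0$. One checks that $U$ is a uninorm with neutral element $e=\frac{3}{8}$, continuous in $(0,1)^2$, and conjunctive; it is an instance of form (b) with $v=\frac{3}{4}$, $\omega=1$, $S_1=\max$ and corner value $0$. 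It is \emph{not} of form (a), since form (a) forces the restriction of $U$ to $[e,1]^2$ to be a rescaled strict $t$-conorm (the conorm part of a representable uninorm), hence free of idempotents in $(e,1)$, whereas here every point of $[\frac{3}{4},1]$ is idempotent. On this $U$ your conjunctive-case construction collapses: $\sup\{x<1 : U(x,1)=x\}=0$, and there is no interval $[u,1]$ on which $U$ is representable. The dichotomy Hu--Li actually prove is about idempotents, not corner values: if $U$ had idempotents $p\in(0,e)$ and $q\in(e,1)$, then $U$ restricted to $[p,q]^2$ would be a uninorm with interior neutral element, continuous on a \emph{closed} square (note $[p,q]^2\subset(0,1)^2$), and no such uninorm exists. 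Indeed $U(p,q)\in\{p,q\}$; if $U(p,q)=p$, the intermediate value theorem gives $x_0$ with $U(x_0,q)=e$, whence $q=U(e,q)=U(U(x_0,q),q)=U(x_0,U(q,q))=U(x_0,q)=e$, a contradiction, and dually if $U(p,q)=q$. So the set of idempotents avoids one of the two open half-intervals, and it is \emph{which} side is idempotent-free --- equivalently, whether the representable core attaches to $1$ or to $0$ --- that separates (a) from (b).

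Beyond this, the step you yourself flag as the main obstacle --- that the block containing $e$ carries an additive generator diverging at both endpoints, so that nilpotent summands adjacent to $e$, or a finite generator value at $u$, are excluded by continuity in the open square --- is precisely the technical heart of the Hu--Li proof, and your proposal leaves it entirely open. Note also that you cannot simply invoke the representation theorem for representable uninorms quoted in the paper at that point: applying it presupposes the existence of the additive generator, which is exactly what has to be established. As it stands, then, the proposal is a reasonable roadmap of the statement, but one of its two pillars (the dichotomy) is wrong as formulated and the other (representability of the core) is missing.
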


$\mathscr{U}_{cos}$ is used to denote the class of all uninorms continuous in $(0,1)^2$. A uninorm as in (\ref{cosmin}) will be defined by  $U\equiv \langle T_1,\lambda,T_2,u,(R,e)\rangle _{cos,min}$, $\mathscr{U}_{cos,min}$ for short. Similarly, a uninorm as in (\ref{cosmax}) will be defined by $$U\equiv \langle(R,e),v,S_1,\omega,S_2\rangle_{cos,max}, \mathscr{U}_{cos,max}$$ for short.

\begin{definition}\label{def4}\cite{Calvo2001Thefunctionalequations, M.Mast-Operators1999}
	A nullnorm is a binary function $F : [0, 1]^2 \rightarrow [0, 1]$, for all $x, y, z \in [0, 1]$, which satisfies the following conditions:
	\begin{enumerate}[(F1)]
		\item
		$F(x, y) = F(y, x)$;
		\item
		$F(F(x, y), z) = F(x,F(y, z))$;
		\item
		$F$ is non-decreasing in each place;
		\item
		There exists an absorbing element $k \in [0, 1],\, F(k, x) = k$ and the following statements hold.
		\begin{enumerate}[(i)]
			\item
			$F(0, x) = x$ for all $x \le k$;
			\item
		    $F(1, x) = x$ for all $x \ge k$.
		\end{enumerate}	
	\end{enumerate}	
In general, $k$ is always given by $F(0,1)$.	
\end{definition}

  Denote $X$ as the universe, then a fuzzy subset $A$ of $X$ can be represented as
  	$\mu_A: X\to [0,1],$
  	where for any $x \in X$, $\mu_A (x)$ is denoted as the membership function of $x$ in fuzzy set $A$. The family of all fuzzy sets of $X$ will be defined as $\mathscr{F}(X)$.
    Further, let $L$ be a nonempty poset, then $(L;\land, \lor)$ is called a lattice, if $x\land y$ and $x\lor y$ always exist for all $x,y\in L$, where the join and meet operations are denoted by ``$\vee$" and ``$\wedge$" in $L$, respectively. Then the concept of fuzzy subset on $L$ will be given.
	
\begin{definition}
	Let $(L,\land,\lor)$ be a lattice. A fuzzy subset $A$ of $L$ can be represented as follows:
	$$\mu_A: L\to [0,1],$$
	where for any $x \in L$, $\mu_A (x)$ is denoted as the membership function of $x$ in fuzzy set $A$. The family of all fuzzy sets of $L$ will be defined as $\mathscr{F}(L)$.
\end{definition}
\begin{definition}\cite{D.BoixaderVagueandfuzzy2021}
		A monoid ($H$, $\ast$ ) consists of a set $H$ with a binary operation $\ast$ : $H^2\to H$, which has identity element and associativity.
\end{definition}

\begin{definition}\cite{D.BoixaderVagueandfuzzy2021,RosenfeldFuzzy1971subgroups}\label{t_fuzzy_submonoid}
	Let $(H, \ast)$ be a monoid with identity element $e$, $T$ a $t$-norm and $\sigma$ a fuzzy subset of $H$. Then $\sigma$ is a $T$-fuzzy
	submonoid of $H$ is equivalent to the following conditions.
	\begin{itemize}
		\item
		$T(\sigma(a),\sigma(b)) \le \sigma(a\ast b)\quad  \forall a, b \in H.$
		\item
		$\sigma (e) = 1.$
	\end{itemize}
\end{definition}

\section{The generalization of fuzzy submonoid}\label{The generation of fuzzy submonoid}
\subsection{$A$-fuzzy submonoid}
Refer to the Definition~\ref{t_fuzzy_submonoid}, we propose the concept of fuzzy submonoid based on aggregation operator $A$.
\begin{definition}
	Let $A$ be an aggregation operator, ($M$, $\circ$) a monoid with identity element $e$ and $\sigma$ a fuzzy subset of $M$. Then $\sigma$ is a $A$-fuzzy submonoid of $M$ if and only if $\sigma$ satisfies the following two conditions:
	\begin{enumerate}[(1)]
		\item 	 $A(\sigma(x_1),\cdots,\sigma(x_n))\leq \sigma(x_1 \circ \cdots \circ x_n), \forall x_1,\cdots,x_n \in M; $
		\item	$\sigma(e) = 1$.
	\end{enumerate}
\end{definition}

\begin{proposition}
	Let $(M,\circ)$ be a monoid and $\sigma$ an $A$-fuzzy submonoid of $M$. Then the core $H$ of $\sigma$ (i.e., the set of elements $x$ in $M$ such that $\sigma(x) = 1$) is a submonoid of $M$.
\end{proposition}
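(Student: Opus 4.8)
The plan is to verify the two defining features of a submonoid for the set $H = \{x \in M : \sigma(x) = 1\}$: that it contains the identity element $e$, and that it is closed under the operation $\circ$. Associativity then comes for free, since $H \subseteq M$ and $\circ$ is already associative on $M$.

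First, membership of the identity is immediate. By the second condition in the definition of an $A$-fuzzy submonoid, $\sigma(e) = 1$, and hence $e \in H$ by the very definition of the core.

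The substance of the argument lies in establishing closure. I would suppose $x, y \in H$, so that $\sigma(x) = \sigma(y) = 1$, and aim to conclude $\sigma(x \circ y) = 1$. Since $A$ is an $n$-ary aggregation operator, I would instantiate the defining inequality at the $n$-tuple $(x, y, e, \dots, e)$, padding with $n-2$ copies of the identity. Because $e$ is the monoid unit, $x \circ y \circ e \circ \cdots \circ e = x \circ y$, so the inequality reads
$$A(\sigma(x), \sigma(y), \sigma(e), \dots, \sigma(e)) \le \sigma(x \circ y).$$
Now $\sigma(x) = \sigma(y) = \sigma(e) = 1$, so the left-hand side equals $A(1, 1, \dots, 1)$, which is $1$ by the boundary condition in the definition of an aggregation operator. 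Combined with $\sigma(x \circ y) \le 1$, which holds because $\sigma$ takes values in $[0,1]$, this forces $\sigma(x \circ y) = 1$, i.e. $x \circ y \in H$.

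There is no genuine obstacle here; the only point requiring a moment's care is the reduction from the $n$-ary inequality to a statement about the binary product, which is handled cleanly by padding the tuple with identities and invoking the normalization $A(1, \dots, 1) = 1$ together with $\sigma(e) = 1$. With $e \in H$ and closure in hand, $H$ inherits associativity from $M$ and is therefore a submonoid, completing the proof.
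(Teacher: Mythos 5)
Your proof is correct and follows essentially the same route as the paper's: pad the pair $(x,y)$ with copies of the identity, use $A(1,\dots,1)=1$ together with the defining inequality $A(\sigma(x),\sigma(y),\sigma(e),\dots,\sigma(e))\le\sigma(x\circ y)$ to force $\sigma(x\circ y)=1$, and note that associativity and the identity element are inherited. Your write-up is in fact slightly more careful than the paper's, since you make the boundary condition and the bound $\sigma(x\circ y)\le 1$ explicit.
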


\begin{proof}
	The identity element of $H$ obviously existed and the associativity is inherited.\\
	For all  $x,y \in H$, we have $1=A(\sigma(x),\sigma(y),\sigma(e),\cdots,\sigma(e))$.
	Since $\sigma$ is an $A$-fuzzy submonoid of $M$. Then we have that
	$$A(\sigma(x),\sigma(y),\sigma(e),\cdots,\sigma(e)) \leq \sigma(x \circ y \circ e \circ \cdots \circ e)=\sigma(x\circ y).$$
	Hence, $\sigma(x\circ y) = 1$, that is, $x \circ y \in H$.
\end{proof}

\begin{example}\label{e:min_max}
Let $A_{min}(x_1,\cdots,x_n)=\textnormal{min}(x_1,\cdots,x_n)$, fuzzy subset $\sigma_1(x)=x$, which is the $A_{min}$-fuzzy submonoid  of $([0,1],T_M)$. Similarly, fuzzy subset $\sigma_2(x)=1-x$, which is the $A_{min}$-fuzzy submonoid  of $([0,1],S_M)$.
\end{example}

In fuzzy logic, the unit interval with a $t$-norm or a $t$-conorm is the most important monoid. Thus, we will consider fuzzy submonoids of a given $t$-norm or $t$-conorm.

\begin{definition}
	Let $A$ and $T$ be an aggregation operator and a $t$-norm, respectively. An $A$-fuzzy submonoid of $([0,1],T)$ will be called an $A$-fuzzy $t$-subnorm of $T$.
\end{definition}
\begin{definition}
	Let $A$ and $S$ be an aggregation operator and a $t$-conorm, respectively. An $A$-fuzzy submonoid of $([0,1],S)$ will be called an $A$-fuzzy $t$-subconorm of $S$.
\end{definition}

The following propositions give the more general conclusions of Example~\ref{e:min_max}.

\begin{proposition}
	Let  $A(x_1,\cdots,x_n)=\textnormal{min}(x_1,\cdots,x_n)$. $\sigma$ is an $A$-fuzzy $t$-subnorm of $T_M$ if and only if $\sigma(1)=1$.
\end{proposition}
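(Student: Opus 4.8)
The plan is to prove the two implications separately, and the key realization is that for this particular choice of aggregation operator and monoid both the $t$-norm $T_M$ and the aggregator $A$ are the minimum, so the submonoid inequality collapses to a triviality and the only genuine constraint is the value of $\sigma$ at the identity.

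First I would fix notation: the identity element of the monoid $([0,1],T_M)$ is $e=1$, since $T_M(x,1)=\min(x,1)=x$ for all $x$. The forward implication is then immediate: if $\sigma$ is an $A$-fuzzy $t$-subnorm of $T_M$, then by condition (2) in the definition of an $A$-fuzzy submonoid we have $\sigma(e)=\sigma(1)=1$.

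For the reverse implication I would assume $\sigma(1)=1$ and verify the two defining conditions. Condition (2) holds by hypothesis. For condition (1), fix arbitrary $x_1,\dots,x_n\in[0,1]$ and let $j$ be an index attaining the minimum, so that $x_j=\min(x_1,\dots,x_n)$. Since the monoid operation is $T_M$, the product satisfies $x_1\circ\cdots\circ x_n=\min(x_1,\dots,x_n)=x_j$, hence $\sigma(x_1\circ\cdots\circ x_n)=\sigma(x_j)$. Because $\sigma(x_j)$ is itself one of the arguments of the minimum on the left, $A(\sigma(x_1),\dots,\sigma(x_n))=\min(\sigma(x_1),\dots,\sigma(x_n))\le\sigma(x_j)=\sigma(x_1\circ\cdots\circ x_n)$, which is exactly the required inequality. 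Thus condition (1) holds for every such $\sigma$, and $\sigma$ is an $A$-fuzzy $t$-subnorm.

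The step I expect to carry the real content is the verification of condition (1) in the reverse direction; the subtlety is not a computation but the observation that the index minimizing $\sigma$ over the arguments need not coincide with the index $j$ minimizing the $x_i$, yet this mismatch only helps us, since $\min_i \sigma(x_i)\le\sigma(x_j)$ holds regardless of where the left-hand minimum is attained. In other words, the inequality is automatic precisely because passing to the argument-minimum on the left can only lower the value below the single term $\sigma(x_j)$ on the right, so no monotonicity or any structural assumption on $\sigma$ is needed beyond pinning down $\sigma(1)=1$.
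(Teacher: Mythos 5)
Your proposal is correct and follows essentially the same route as the paper: the paper's proof consists precisely of observing that $\min(\sigma(x_1),\dots,\sigma(x_n))\le\sigma(\min(x_1,\dots,x_n))$ holds for every fuzzy subset $\sigma$, which is the same key inequality you justify via the index $j$ attaining the minimum. Your write-up merely makes explicit what the paper leaves implicit (the trivial forward direction via $\sigma(e)=1$ and the reason the inequality is automatic), so there is no substantive difference in approach.
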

\begin{proof}
	For all fuzzy subset $\sigma$ of $[0,1]$, one can conclude that
	$$ \textnormal{min}(\sigma(x_1),\cdots,\sigma(x_n))\leq \sigma(\textnormal{min}(x_1,\cdots,x_n)) ,$$
	where $x_1,\cdots, x_n \in [0,1]$.
\end{proof}

\begin{proposition}
	Let  $A(x_1,\cdots,x_n)=\textnormal{min}(x_1,\cdots,x_n)$. $\sigma$ is an $A$-fuzzy t-subconorm of $S_M$  if and only if $\sigma(0)=1$.
\end{proposition}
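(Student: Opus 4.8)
The plan is to unwind the definition of an $A$-fuzzy $t$-subconorm of $S_M$ in the case $A=\textnormal{min}$ and show that it collapses to a single condition on $\sigma$. Recall that $([0,1],S_M)$ is a monoid whose identity element is $0$, since $S_M(x,0)=\textnormal{max}(x,0)=x$. Hence $\sigma$ being an $A$-fuzzy submonoid of $([0,1],S_M)$ amounts to the two requirements
$$\textnormal{min}(\sigma(x_1),\cdots,\sigma(x_n))\le \sigma(\textnormal{max}(x_1,\cdots,x_n)),\qquad \sigma(0)=1,$$
for all $x_1,\cdots,x_n\in[0,1]$. The first requirement is the aggregation inequality with $A=\textnormal{min}$ and $\circ=S_M=\textnormal{max}$; the second is the normalization at the identity element.

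The core step, exactly dual to the $T_M$ case, is to observe that the first inequality holds automatically for every fuzzy subset $\sigma$. Indeed, writing $m=\textnormal{max}(x_1,\cdots,x_n)$, one has $m=x_k$ for some index $k$, so $\sigma(\textnormal{max}(x_1,\cdots,x_n))=\sigma(x_k)$. Since $\sigma(x_k)$ is one of the arguments appearing in $\textnormal{min}(\sigma(x_1),\cdots,\sigma(x_n))$, the minimum is bounded above by it, yielding $\textnormal{min}(\sigma(x_1),\cdots,\sigma(x_n))\le\sigma(x_k)=\sigma(\textnormal{max}(x_1,\cdots,x_n))$. Note that this argument uses only that the maximum is attained at one of the inputs, and imposes no monotonicity or any other assumption on $\sigma$.

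With the aggregation inequality thus disposed of, being an $A$-fuzzy $t$-subconorm of $S_M$ is equivalent to the single surviving condition $\sigma(0)=1$, which delivers both directions of the stated equivalence at once. The only place calling for care is the correct identification of the identity element: for the $t$-conorm $S_M$ the identity is $0$ (rather than $1$, as in the $t$-norm $T_M$), so the normalization reads $\sigma(0)=1$. Beyond this bookkeeping there is no genuine obstacle, since the proposition is the order-theoretic dual of the preceding $A$-fuzzy $t$-subnorm result and the key inequality is automatic for the same reason.
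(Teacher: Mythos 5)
Your proof is correct and follows essentially the same route as the paper: both reduce the claim to the observation that $\textnormal{min}(\sigma(x_1),\cdots,\sigma(x_n))\le \sigma(\textnormal{max}(x_1,\cdots,x_n))$ holds for every fuzzy subset $\sigma$, so that only the normalization $\sigma(0)=1$ at the identity element of $S_M$ remains. Your write-up is in fact more complete than the paper's, which states the key inequality without the justification (that the maximum is attained at some $x_k$) and leaves the identification of $0$ as the identity element implicit.
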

\begin{proof}
	For all fuzzy subset $\sigma$ of [0,1], one can conclude that
	$$ \textnormal{min}(\sigma(x_1),\cdots,\sigma(x_n))\leq \sigma(\textnormal{max}(x_1,\cdots,x_n)) ,$$
	where $x_1,\cdots,x_n \in [0,1]$.
\end{proof}

\subsection{$U$-fuzzy submonoid}\label{$U$-fuzzy submonoid}
In particular, when the aggregation operations take uninorms, we have the following conclusions.
\begin{definition}
	Let $U$ be a uninorm, ($M$, $\circ$) a monoid with identity element $e$ and $\sigma$ a fuzzy subset of $M$. Then $\sigma$ is a $U$-fuzzy submonoid of $M$ if and only if $\sigma$ satisfies the following conditions:
	\begin{enumerate}[(1)]
		\item 	 $U(\sigma(x),\sigma(y))\leq \sigma(x \circ y), \forall x, y \in M;$
		\item	$\sigma(e) = 1$.
	\end{enumerate}
\end{definition}
\begin{proposition}
	Let $U$ be a uninorm, $(M,\circ)$ a monoid and $\sigma$ a $U$-fuzzy submonoid of $M$. Then the core $H$ of $\sigma$ (i.e., the set of elements $x$ of $M$ such that $\sigma(x) = 1$) is a submonoid of $M$.
\end{proposition}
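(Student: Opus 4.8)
The plan is to verify the three defining properties of a submonoid for the core $H=\{x\in M:\sigma(x)=1\}$: that it contains the identity, that associativity is inherited from $M$, and that it is closed under $\circ$. The first two are immediate. Since $\sigma$ is a $U$-fuzzy submonoid we have $\sigma(e)=1$, so $e\in H$ and $H$ is nonempty with the correct identity, while associativity of $\circ$ restricted to $H$ is inherited from the ambient monoid $M$.

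The substantive step is closure under $\circ$. Given $x,y\in H$, so that $\sigma(x)=\sigma(y)=1$, the defining inequality of a $U$-fuzzy submonoid gives
$$U(\sigma(x),\sigma(y))=U(1,1)\le \sigma(x\circ y).$$
To conclude that $\sigma(x\circ y)=1$, and hence $x\circ y\in H$, it therefore suffices to show that $U(1,1)=1$.

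The one point requiring a small argument — and the only place the proof genuinely differs from the aggregation-operator version, where the corresponding identity $A(1,\dots,1)=1$ is built into the definition — is establishing $U(1,1)=1$. I would obtain this directly from the uninorm axioms: by the identity law (U4) we have $U(1,e)=1$, and since $e\le 1$, monotonicity of $U$ in its second argument (U3) yields $U(1,1)\ge U(1,e)=1$; as $U$ takes values in $[0,1]$ this forces $U(1,1)=1$. Substituting back gives $1\le\sigma(x\circ y)\le 1$, whence $\sigma(x\circ y)=1$ and $x\circ y\in H$. This completes the verification that $H$ is a submonoid. The proof is thus routine once the auxiliary fact $U(1,1)=1$ is isolated; no deeper structure of the uninorm (its class, the position of $e$, or its behaviour on $A(e)$) is needed, since only the boundary value at $(1,1)$ is relevant.
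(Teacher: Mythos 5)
Your proof is correct and follows essentially the same route as the paper's: identity and associativity are inherited, and closure follows from $U(\sigma(x),\sigma(y))\le\sigma(x\circ y)$ applied at $\sigma(x)=\sigma(y)=1$. The only difference is that the paper simply writes $1=U(\sigma(x),\sigma(y))$, leaving the fact $U(1,1)=1$ implicit, whereas you justify it explicitly from (U3) and (U4) --- a harmless (and slightly more careful) elaboration of the same argument.
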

\begin{proof}
	The identity element of $H$ obviously existed and the associativity is inherited.
	For all $x,y \in H$, we have that
	$$ 1=U(\sigma(x),\sigma(y))\leq\sigma(x\circ y).$$
	Therefore, $x \circ y \in H$.
\end{proof}

\begin{definition}
	A discrete uninorm is a submonoid of a uninorm containing $0$ and $1$.
\end{definition}
\begin{example}
	Let $U_L= \langle T_L,e,S_L \rangle _{\textnormal{min}}, L_{n,m}=\{0,\frac{e}{n},\cdots,e,e+\frac{1-e}{m},\cdots,1 \} $. Then $L_{n,m}$ is a discrete uninorm of $U_L$.
	
\end{example}
\begin{definition}\label{d:U-fuzzy_t-subnorm_T}
	Let $U$ and $T$ be a uninorm and a $t$-norm, respectively. A $U$-fuzzy submonoid of $([0,1],T)$ will be called a $U$-fuzzy $t$-subnorm of $T$.
\end{definition}
\begin{definition}\label{d:U-fuzzy_t-subconorm_S}
	Let $U$ and $S$ be a uninorm and a $t$-conorm, respectively. A $U$-fuzzy submonoid of $([0,1],S)$ will be called a $U$-fuzzy $t$-subconorm of $S$.
\end{definition}
The following theorem gives the necessary and sufficient conditions for $\sigma$ to be a $U$-fuzzy submonoid of $(M,\circ)$.
\begin{theorem}
For a monoid $M$ with identity element $e$, if a uninorm U is disjunctive, then $\sigma$ is $U$-fuzzy submonoid of $(M,\circ)$ if and only if $\sigma \equiv 1$.
\end{theorem}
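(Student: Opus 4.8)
The plan is to treat the two implications separately, with essentially all of the content residing in the forward direction.

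First I would dispose of the easy direction. If $\sigma\equiv 1$, then condition (2), $\sigma(e)=1$, holds immediately, and condition (1) reduces to $U(1,1)\le 1$, which is automatic since the codomain of $U$ is $[0,1]$. (In fact $U(1,1)=1$, since $U(e,1)=1$ and $1\ge e$ give $U(1,1)\ge U(e,1)=1$ by monotonicity.) Hence the constant map $\sigma\equiv 1$ is always a $U$-fuzzy submonoid, and no hypothesis on $U$ is needed for this half.

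For the converse, suppose $\sigma$ is a $U$-fuzzy submonoid and fix an arbitrary $x\in M$. The idea is to feed the pair $(x,e)$ into the defining inequality (1). Since $e$ is the identity of $M$ we have $x\circ e=x$, and since $\sigma(e)=1$ by condition (2), inequality (1) becomes
$$U(\sigma(x),1)\le \sigma(x\circ e)=\sigma(x).$$
So everything reduces to evaluating $U(\sigma(x),1)$. The key step — and the only place disjunctivity enters — is the observation that a disjunctive uninorm satisfies $U(a,1)=1$ for \emph{every} $a\in[0,1]$. This follows at once from the definition $U(1,0)=1$ together with commutativity and monotonicity: for any $a$ we have $U(a,1)=U(1,a)\ge U(1,0)=1$, forcing $U(a,1)=1$. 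Applying this with $a=\sigma(x)$ yields $1=U(\sigma(x),1)\le\sigma(x)$, whence $\sigma(x)=1$. As $x$ was arbitrary, $\sigma\equiv 1$.

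I do not anticipate a genuine obstacle here: the argument is short once one recognizes that disjunctivity propagates from the single value $U(1,0)=1$ to the entire top section $U(\,\cdot\,,1)\equiv 1$ by monotonicity. The one point worth stating carefully is that inequality (1) should be instantiated precisely at the identity $e$ rather than at a generic pair, since it is exactly the interaction of $\sigma(e)=1$ with the absorbing behaviour of $1$ under a disjunctive $U$ that collapses $\sigma$ to the constant $1$.
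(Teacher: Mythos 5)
Your proposal is correct and follows essentially the same route as the paper's own proof: both use disjunctivity plus monotonicity to conclude $U(a,1)=1$ for all $a\in[0,1]$, and then instantiate the defining inequality at $y=e$ to get $1=U(\sigma(x),1)\le\sigma(x)$. Your write-up is in fact slightly more careful, since the paper asserts ``$U(0,1)=1$ implies $U(x,1)=1$ for all $x$'' without spelling out the monotonicity step, which you do.
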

\begin{proof}
	Since $U$ is disjunctive, i.e., $U(0,1)=1$. Then for all $x\in [0,1]$, we have that $U(x,1)=1$.
	
	If $\sigma$ is $U$-fuzzy submonoid of $([0,1],M)$, then $\sigma(e)=1$ and
	$U(\sigma(x),\sigma(y))\leq \sigma(x\circ y).$
	For all $x \in [0,1]$, taking $y=e$, we can conclude that $1=U(\sigma(x),1)\leq \sigma(x)$.
	
	The other hand, if $\sigma \equiv 1$, then $\sigma$ obviously is $U$-fuzzy submonoid of $([0,1],M)$.
\end{proof}
Further, if we take monoid as $t$-norm and $t$-conorm, the following corollaries hold.
\begin{corollary}
	For a t-norm $T$, if uninorm $U$ is disjunctive, then $\sigma$ is $U$-fuzzy $t$-subnorm of $([0,1],T)$ if and only if $\sigma \equiv 1$.
\end{corollary}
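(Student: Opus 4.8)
The plan is to recognize this corollary as an immediate specialization of the preceding theorem to the particular monoid $([0,1],T)$. The first thing I would do is confirm that $([0,1],T)$ genuinely is a monoid with an identity element, so that the theorem's hypotheses are satisfied. This follows directly from the $t$-norm axioms in Definition~\ref{d:t-norm}: associativity is axiom~(2), commutativity is axiom~(1), and axiom~(4) gives $T(x,1)=x$, so $1$ serves as the (two-sided) identity element of the binary operation $T$ on $[0,1]$.

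Next I would invoke Definition~\ref{d:U-fuzzy_t-subnorm_T}, which defines a $U$-fuzzy $t$-subnorm of $T$ to be precisely a $U$-fuzzy submonoid of $([0,1],T)$. Thus the statement to be proved is literally the statement of the theorem with $M$ taken to be $([0,1],T)$ and the identity element $e$ taken to be $1$. Since $U$ is assumed disjunctive, the theorem applies verbatim and yields that $\sigma$ is a $U$-fuzzy submonoid of $([0,1],T)$ if and only if $\sigma\equiv 1$, which is exactly the claim.

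There is essentially no obstacle here; the only point requiring attention is the bookkeeping observation that the identity element of the monoid $([0,1],T)$ is $1$ rather than an abstract $e$, so that the conclusion $\sigma\equiv 1$ delivered by the theorem is the intended one. For completeness I might also unwind the one-line argument directly: taking $y=1$ in the submonoid inequality gives $1=U(\sigma(x),\sigma(1))=U(\sigma(x),1)\le\sigma(x)$ for every $x\in[0,1]$, using $\sigma(1)=1$ together with the disjunctivity-forced identity $U(t,1)=1$; this forces $\sigma\equiv 1$, while the converse direction is immediate.
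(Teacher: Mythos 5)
Your proposal is correct and matches the paper's approach: the paper states this corollary without a separate proof, presenting it as the immediate specialization of the preceding theorem to the monoid $([0,1],T)$ with identity element $1$, which is exactly your argument. Your optional direct unwinding (taking $y=1$ and using $U(\sigma(x),1)=1$ by disjunctivity and monotonicity) is also the same computation the paper's theorem proof performs, just instantiated at $e=1$.
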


\begin{corollary}
	For a t-conorm $S$, if uninorm $U$ is disjunctive, then $\sigma$ is $U$-fuzzy  $t$-subconorm of $([0,1],S)$ if and only if $\sigma$ $\equiv$ 1.
\end{corollary}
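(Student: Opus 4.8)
The plan is to recognize this corollary as an immediate specialization of the preceding theorem, applied to the particular monoid $([0,1],S)$. First I would verify that $([0,1],S)$ is genuinely a monoid: by Definition~\ref{d:t-conorm}, a $t$-conorm $S$ is commutative and associative, and it satisfies $S(x,0)=x$ for all $x\in[0,1]$, so $0$ serves as the identity element. Hence $([0,1],S)$ is a monoid with identity $e=0$, exactly the kind of object to which the theorem applies.

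Next I would recall Definition~\ref{d:U-fuzzy_t-subconorm_S}, which states that a $U$-fuzzy $t$-subconorm of $S$ is by definition nothing other than a $U$-fuzzy submonoid of the monoid $([0,1],S)$. Thus the assertion ``$\sigma$ is a $U$-fuzzy $t$-subconorm of $([0,1],S)$'' and the assertion ``$\sigma$ is a $U$-fuzzy submonoid of the monoid $([0,1],S)$'' coincide, so the equivalence I need is literally the theorem's equivalence read under this identification.

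Finally, since $U$ is assumed disjunctive, I would invoke the theorem directly with $M=([0,1],S)$: it yields that $\sigma$ is a $U$-fuzzy submonoid of $([0,1],S)$ if and only if $\sigma\equiv 1$. Combining this with the definitional identification above gives precisely the claim, completing the argument.

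There is essentially no genuine obstacle here---the entire content is carried by the theorem, and the corollary is a pure instantiation. The only point requiring any care is the bookkeeping that the identity element of the $t$-conorm monoid is $e=0$ (rather than $e=1$, as in the $t$-norm case). Since the theorem imposes no condition on the value of the identity element beyond its existence, this causes no difficulty, and in fact one sees the mechanism explicitly by taking $y=e=0$ in the inequality $U(\sigma(x),\sigma(y))\le\sigma(S(x,y))$ together with the disjunctivity relation $U(\sigma(x),1)=1$.
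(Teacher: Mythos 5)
Your proposal is correct and matches the paper's approach exactly: the paper gives no separate proof for this corollary, deriving it (as you do) as a direct instantiation of the preceding theorem with $M=([0,1],S)$, using the definitional identification of a $U$-fuzzy $t$-subconorm with a $U$-fuzzy submonoid of $([0,1],S)$. Your extra care in noting that the identity element is $e=0$ and that the theorem is indifferent to this value is a sound, if brief, addition.
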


\begin{corollary}
For arbitrary uninorm $U$ in $\mathscr{U}_{max}$, if fuzzy subset $\sigma$ is $U$-fuzzy submonoid of $M$, then $\sigma \equiv 1$.
\end{corollary}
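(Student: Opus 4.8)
The plan is to reduce this corollary to the theorem already established for disjunctive uninorms, the essential observation being that every uninorm in $\mathscr{U}_{max}$ is disjunctive, so the corollary is merely a specialization.

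First I would recall the structural characterization of $\mathscr{U}_{max}$ from the cited theorem on uninorms with identity $e\in(0,1)$: a uninorm $U$ belongs to $\mathscr{U}_{max}$ precisely when $U(0,1)=1$ and $U$ agrees with $\max$ on the region $A(e)$. In particular one has $U(0,1)=1$, which by commutativity (axiom (U1)) is the same as $U(1,0)=1$, and this is exactly the defining condition for $U$ to be disjunctive according to the earlier remark. For the degenerate boundary cases ($e=0$ or $e=1$) the class $\mathscr{U}_{max}$ collapses to a $t$-conorm $S$, and $S(0,1)=S(1,0)=1$ still holds, so disjunctivity persists throughout the class.

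Having established $U(1,0)=1$, I would then invoke the theorem asserting that for a monoid $M$ with identity $e$, if $U$ is disjunctive then $\sigma$ is a $U$-fuzzy submonoid of $(M,\circ)$ if and only if $\sigma\equiv 1$. Since $\sigma$ is assumed to be a $U$-fuzzy submonoid, the forward implication of that theorem delivers $\sigma\equiv 1$ immediately. I do not anticipate a real obstacle: the only point requiring care is confirming that membership in $\mathscr{U}_{max}$ genuinely entails $U(1,0)=1$ (and not merely some weaker behaviour on $A(e)$), but this is precisely the $U(0,1)=1$ clause in the defining theorem, so the reduction to the disjunctive case is clean and the argument is a one-line application.
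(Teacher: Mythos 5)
Your proposal is correct and matches the paper's intent exactly: the paper states this corollary without proof precisely because it is an immediate specialization of the preceding theorem, since every $U\in\mathscr{U}_{max}$ satisfies $U(0,1)=1$ and is therefore disjunctive. Your only addition is the careful check of the boundary cases and of commutativity giving $U(1,0)=U(0,1)=1$, which is harmless and consistent with the paper's definitions.
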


\begin{proposition}\label{decreasing}
	Let $U$ be a uninorm and $B$ be a set, which denoted as $B=\{x\in[0,1]
	\,|\,\sigma(x)\in [e,1]\}$.	
	\begin{equation*}
	U(x,y)=\left\{
	\begin{aligned}
	&eT(\frac{x}{e},\frac{y}{e}),&(x,y)\in[0,e]^2,\\
	&e+(1-e)S_M(\frac{x-e}{1-e},\frac{y-e}{1-e}),&(x,y)\in (e,1]^2,\\
	&\textnormal{min}(x,y),&otherwise.
	\end{aligned}
	\right.
	\end{equation*}
	where $\sigma$ is fuzzy subset. Then $\sigma$ is $U$-fuzzy t-subnorm of $([0,1],T_M)$ if and only if $\sigma$ is decreasing on $B$ and $\sigma(1)=1$.
\end{proposition}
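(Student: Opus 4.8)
The plan is to unwind the definition of a $U$-fuzzy submonoid for the particular monoid $([0,1],T_M)$ and then read off both required properties from a case analysis of the inequality $U(\sigma(x),\sigma(y))\le\sigma(T_M(x,y))$. First I would record that the identity element of $([0,1],T_M)$ is $1$, since $T_M(x,1)=\min(x,1)=x$; consequently condition (2) in the definition of a $U$-fuzzy submonoid is literally $\sigma(1)=1$, which already supplies that half of the characterization. It then remains to prove that, under $\sigma(1)=1$, condition (1) is equivalent to $\sigma$ being decreasing on $B$.

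The key simplification is to insert $S_M=\max$ into the middle branch of $U$: for $(a,b)\in(e,1]^2$ one gets $U(a,b)=e+(1-e)\max\!\big(\tfrac{a-e}{1-e},\tfrac{b-e}{1-e}\big)=\max(a,b)$, while on $[0,e]^2$ we have $U(a,b)=eT(a/e,b/e)\le e\min(a/e,b/e)=\min(a,b)$ because $T\le T_M$, and on the mixed region $U(a,b)=\min(a,b)$. Using commutativity I would check condition (1) only for $x\le y$, where it reads $U(\sigma(x),\sigma(y))\le\sigma(x)$, and split according to the position of the pair $(\sigma(x),\sigma(y))$. If both coordinates are $\le e$, or if exactly one exceeds $e$, the computation above gives $U(\sigma(x),\sigma(y))\le\min(\sigma(x),\sigma(y))\le\sigma(x)$, so the inequality is automatic. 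The only binding case is $\sigma(x),\sigma(y)>e$, where $U(\sigma(x),\sigma(y))=\max(\sigma(x),\sigma(y))$ and the required inequality $\max(\sigma(x),\sigma(y))\le\sigma(x)$ holds exactly when $\sigma(y)\le\sigma(x)$. Thus condition (1) is equivalent to: $x\le y$ with $\sigma(x),\sigma(y)>e$ forces $\sigma(y)\le\sigma(x)$, i.e.\ $\sigma$ is non-increasing on $B$.

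Assembling the two directions is then routine: for the converse, $\sigma(1)=1$ gives condition (2) and decreasingness on $B$ feeds exactly the one nontrivial case above, so $\sigma$ is a $U$-fuzzy $t$-subnorm; for the forward direction, condition (2) is $\sigma(1)=1$ and, restricting condition (1) to pairs drawn from $B$, the ``both $>e$'' computation yields the monotonicity. The step I expect to be the main obstacle is the boundary behavior at the value $e$ together with its interaction with the constraint $\sigma(1)=1$: since $\sigma(1)=1>e$, testing the pair $(x,1)$ through condition (1) forces $\max(\sigma(x),1)\le\sigma(x)$ whenever $\sigma(x)>e$, hence $\sigma(x)=1$, so $\sigma$ can never take a value strictly between $e$ and $1$. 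Matching this dichotomy with the closed interval $[e,1]$ used to define $B$ is the delicate point, and I would treat the endpoints $\sigma(x)=e$ and $\sigma(x)=1$ explicitly to make the equivalence with ``decreasing on $B$'' precise.
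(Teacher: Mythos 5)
Your core argument is the same as the paper's: split according to where the pair $(\sigma(x),\sigma(y))$ falls relative to $e$, note that the $[0,e]^2$ and mixed cases are automatic because there $U(\sigma(x),\sigma(y))\le\min(\sigma(x),\sigma(y))\le\sigma(\min(x,y))$, and that the only binding case is the one where $U$ acts as $\max$, which is exactly where the monotonicity of $\sigma$ enters. If anything, the paper's proof is \emph{less} careful than your proposal: it never records that $1$ is the identity of $([0,1],T_M)$ (so that condition (2) is literally $\sigma(1)=1$), it treats the max case on the closed square $[e,1]^2$ even though the given $U$ equals $\max$ only on $(e,1]^2$, and the whole ``only if'' direction is left implicit in an ``if and only if'' buried inside one case.

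The ``delicate point'' you flag at the end is not a defect of your write-up but a genuine flaw in the statement, which the paper's proof silently glosses over. Your own observation --- condition (1) together with $\sigma(1)=1$ forces $\sigma(x)=1$ whenever $\sigma(x)>e$ --- is exactly the right tool: it shows condition (1) is equivalent to $\sigma$ being non-increasing on the set $\{x\in[0,1]\,:\,\sigma(x)\in(e,1]\}$, with \emph{open} lower endpoint, and this is not the same as being decreasing on the closed set $B=\{x\in[0,1]\,:\,\sigma(x)\in[e,1]\}$ used in the proposition. Concretely, take $\sigma(x)=e$ for all $x<1$ and $\sigma(1)=1$: every pair $(\sigma(x),\sigma(y))$ then lies in $[0,e]^2$, in the mixed (min) region, or equals $(1,1)$, so all the required inequalities hold and $\sigma$ is a $U$-fuzzy $t$-subnorm; yet $B=[0,1]$ and $\sigma$ jumps upward at $1$, so it is not decreasing on $B$. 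Hence the ``only if'' direction fails as literally stated, and your argument becomes a complete and correct proof precisely when $B$ is redefined via $(e,1]$ (equivalently, when decreasingness is demanded only where $\sigma$ strictly exceeds $e$). You were right to identify the endpoint behavior as the crux; the paper never addresses it.
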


\begin{proof}
	
	\begin{enumerate}[(1)]
		\item
		If $(\sigma(x),\sigma(y))\in [0,e]^2$, then
		\begin{align*}
		U((\sigma(x),\sigma(y))) &=eT(\frac{\sigma(x)}{e},\frac{\sigma(y)}{e})\\
		&\leq \textnormal{min}(\sigma(x),\sigma(y))\\
		& \leq \sigma(\textnormal{min}(x,y)).
		\end{align*}
		\item
		If $(\sigma(x),\sigma(y))\in [e,1]^2$, then
		\begin{align*}
		U(\sigma(x), \sigma(y))=\textnormal{max}(\sigma(x),\sigma(y)) \leq \sigma(\textnormal{min}(x,y)),	
		\end{align*}	
		if and only if
		$x\leq y$, then $\sigma(y)\leq \sigma(x)$.
		\item
		If $(\sigma(x),\sigma(y))\in [0,e]\times [e,1]$, one concludes that
		\begin{align*}
		U(\sigma(x),\sigma(y))=\textnormal{min}(\sigma(x),\sigma(y)) \leq \sigma(\textnormal{min}(x,y)).
		\end{align*}	
	\end{enumerate}
\end{proof}

Then the dual conclusions about $U$-fuzzy $t$-subnorm of $([0,1],S_M)$ can be obtained.
\begin{proposition}\label{increasing}
	Let $U$ be a uninorm and $B$ be a set, which denoted as $B=\{x\in[0,1]\,|\, \sigma(x)\in [e,1]\}$.
	\begin{equation*}
	U(x,y)=\left\{
	\begin{aligned}
	&eT(\frac{x}{e},\frac{y}{e}),&(x,y)\in[0,e]^2,\\
	&e+(1-e)S_M(\frac{x-e}{1-e},\frac{y-e}{1-e}),&(x,y)\in (e,1]^2,\\
	&\textnormal{min}(x,y),&otherwise.
	\end{aligned}
	\right.
	\end{equation*}
	where $\sigma$ is a fuzzy subset. Then $\sigma$ is a $U$-fuzzy $t$-subconorm of $([0,1],S_M)$ if and only if $\sigma$ is increasing on $B$ and $\sigma(0)=1$.
\end{proposition}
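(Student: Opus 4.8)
The plan is to treat this as the order-theoretic dual of Proposition~\ref{decreasing}: everywhere the $T_M$-case used $\min$ and the normalization $\sigma(1)=1$, the $S_M$-case will use $\max$ and $\sigma(0)=1$. First I would unwind the definitions. Since $S_M(x,y)=\max(x,y)$ and the identity element of $([0,1],S_M)$ is $0$, Definition~\ref{d:U-fuzzy_t-subconorm_S} says that $\sigma$ is a $U$-fuzzy $t$-subconorm of $([0,1],S_M)$ precisely when $U(\sigma(x),\sigma(y)) \le \sigma(\max(x,y))$ for all $x,y \in [0,1]$ together with $\sigma(0)=1$. The strategy is then to evaluate $U(\sigma(x),\sigma(y))$ according to which of the three branches of the given uninorm the pair $(\sigma(x),\sigma(y))$ lands in, and in each branch compare the value with $\sigma(\max(x,y))$.

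Next I would dispose of the two branches that impose no condition. If $(\sigma(x),\sigma(y)) \in [0,e]^2$, the $t$-norm branch gives $U(\sigma(x),\sigma(y)) = eT(\sigma(x)/e,\sigma(y)/e) \le \min(\sigma(x),\sigma(y))$; and if $(\sigma(x),\sigma(y))$ lies in the mixed region (one coordinate in $[0,e]$, the other in $(e,1]$), the third branch gives $U(\sigma(x),\sigma(y)) = \min(\sigma(x),\sigma(y))$. In both situations it suffices to observe that $\min(\sigma(x),\sigma(y)) \le \sigma(\max(x,y))$ holds for every fuzzy subset, since $\sigma(\max(x,y))$ is whichever of $\sigma(x),\sigma(y)$ sits at the larger of the arguments $x,y$, and the minimum of the two values never exceeds either of them. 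Hence these branches are satisfied automatically and contribute nothing to the characterization.

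The decisive branch is $(\sigma(x),\sigma(y)) \in (e,1]^2$, i.e. (essentially) $x,y \in B$, where the $t$-conorm branch collapses to $U(\sigma(x),\sigma(y)) = e + (1-e)S_M(\tfrac{\sigma(x)-e}{1-e},\tfrac{\sigma(y)-e}{1-e}) = \max(\sigma(x),\sigma(y))$. Taking $x \le y$ without loss of generality so that $\max(x,y)=y$, the required inequality $\max(\sigma(x),\sigma(y)) \le \sigma(y)$ is equivalent to $\sigma(x) \le \sigma(y)$, that is, to $\sigma$ being increasing on $B$. For necessity I would run this implication for an arbitrary pair $x \le y$ in $B$; for sufficiency I would note that increase on $B$ secures exactly this branch while the previous paragraph handles all others, and the normalization $\sigma(0)=1$ is just condition (2) of the definition. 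This yields the stated equivalence.

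The step requiring the most care --- and the main obstacle --- is the seam $\{x : \sigma(x)=e\}$ along which the $t$-norm square meets the $t$-conorm square. Because $B$ is defined using the closed interval $[e,1]$ while the uninorm switches its formula between $[0,e]^2$ and $(e,1]^2$, one must verify that a pair having a coordinate exactly equal to $e$ falls into the $t$-norm or mixed (minimum) branch, so that the genuine monotonicity constraint is extracted only from the open upper square $(e,1]^2$ and the boundary values remain consistent with $\sigma$ being increasing on $B$. Once this bookkeeping is checked, the rest is a direct dualization of the proof of Proposition~\ref{decreasing} and involves no new ideas.
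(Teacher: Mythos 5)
Your three-branch case split is exactly the paper's route (the paper proves Proposition~\ref{increasing} by dualizing the case analysis of Proposition~\ref{decreasing}), and your second and third paragraphs are fine: the $[0,e]^2$ and mixed branches are vacuous because there $U(\sigma(x),\sigma(y))\le\min(\sigma(x),\sigma(y))\le\sigma(\max(x,y))$, and the $(e,1]^2$ branch is equivalent to monotonicity of $\sigma$ on the set $\{x:\sigma(x)>e\}$. The gap is precisely the point you defer to ``bookkeeping'' in your final paragraph: that check cannot be completed, because with $B$ defined by the \emph{closed} interval $[e,1]$ the ``only if'' direction of the proposition is false. Concretely, take $\sigma(x)=1$ for $x\in[0,e]$ and $\sigma(x)=e$ for $x\in(e,1]$. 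Then $\sigma(0)=1$, and condition (1) of Definition~\ref{d:U-fuzzy_t-subconorm_S} holds for every pair: if $x,y\in[0,e]$ then $(\sigma(x),\sigma(y))=(1,1)$ and $U(1,1)=1\le\sigma(\max(x,y))=1$; if exactly one of $x,y$ lies in $(e,1]$ then $(\sigma(x),\sigma(y))=(1,e)$ falls in the minimum branch and $U=e\le\sigma(\max(x,y))=e$; and if $x,y\in(e,1]$ then $(\sigma(x),\sigma(y))=(e,e)$ falls in the $t$-norm branch and $U=eT(1,1)=e\le e$. So $\sigma$ is a $U$-fuzzy $t$-subconorm of $([0,1],S_M)$, yet it is not increasing on $B=\{x:\sigma(x)\ge e\}=[0,1]$, since $\sigma(0)=1>e=\sigma(1)$.

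In other words, the monotonicity constraint you extract lives on $\{\sigma>e\}$ and genuinely cannot be propagated to the seam $\{\sigma=e\}$; your assertion that ``the boundary values remain consistent with $\sigma$ being increasing on $B$'' is the step that fails. Your argument (and the statement) become correct if $B$ is read as $\{x:\sigma(x)\in(e,1]\}$; one can also observe that pairing any $x$ with $0$ forces $\sigma\equiv 1$ on $\{\sigma>e\}$, which makes the true structure of such $\sigma$ transparent. To be fair, the paper's own template proof of Proposition~\ref{decreasing} is equally cavalier --- it labels the maximum branch $[e,1]^2$ although the uninorm uses $(e,1]^2$, and never addresses the seam --- and the dualized version of the counterexample above defeats that proposition as stated, too. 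So you were right to single out the seam as the main obstacle; the error was claiming it could be checked away rather than recognizing it as a failure of the closed-interval formulation.
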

\begin{proof}
	It can be proven in a similar way as Proposition \ref{decreasing}.
\end{proof}

\begin{example}
	In particular, the fuzzy subset $\sigma$ is denoted as:
	\begin{equation*}
	\sigma(x)=\left \{
	\begin{aligned}
	&x, &0\leq x< e,\\
	&1, &e\leq x \leq 1.
	\end{aligned}
	\right.	
	\end{equation*}	
	According to Proposition~\ref{decreasing}, $B=\{x\in[e,1]\, |\, \sigma(x)=1 \}$ and $\sigma$ is decreasing on $B$, then $\sigma$ is $U$-fuzzy $t$-subnorm of $([0,1], T_M)$.
\end{example}

Not all fuzzy subset $\sigma$ of a monoid $M$ can find corresponding uninorm $U$ such that  $\sigma$ is a $U$-fuzzy submonoid of $M$. Then the following propositions hold.

\begin{proposition}
	Let $\sigma(x)=x$ be a fuzzy subset, $T$ be a $t$-norm. There is no uninorm $U$ such that $\sigma$ is a $U$-fuzzy $t$-subnorm of $([0,1],T)$.
\end{proposition}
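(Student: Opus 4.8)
The plan is to reduce the two defining conditions of a $U$-fuzzy $t$-subnorm to a single pointwise inequality and then extract a contradiction from the identity axiom $(U4)$ of the uninorm. Since the identity element of the monoid $([0,1],T)$ is $1$ and $\sigma(x)=x$, the condition $\sigma(e)=1$ is automatic because $\sigma(1)=1$. Hence the only substantive requirement is $U(\sigma(x),\sigma(y))\le\sigma(T(x,y))$ for all $x,y\in[0,1]$, which under $\sigma(x)=x$ becomes
\begin{equation*}
U(x,y)\le T(x,y)\qquad\text{for all }x,y\in[0,1].
\end{equation*}
So it suffices to show that no uninorm can lie pointwise below $T$.

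The key step is to test this inequality at the identity $e$ of $U$. Taking $y=e$ and using $(U4)$ gives $U(x,e)=x$, while the monotonicity of $T$ together with $e\le 1$ gives $T(x,e)\le T(x,1)=x$. Feeding both into the displayed inequality yields $x=U(x,e)\le T(x,e)\le x$, so that $T(x,e)=x$ for every $x\in[0,1]$. In other words, $e$ would have to act as a two-sided identity for $T$ as well.

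Next I would use the uniqueness of the identity of a $t$-norm to pin down $e$. Setting $x=1$ in $T(x,e)=x$ gives $T(1,e)=1$; but $T(1,e)=T(e,1)=e$ by conditions (1) and (4) of Definition~\ref{d:t-norm}, forcing $e=1$. The endpoint $e=0$ is excluded as well, since then $T(x,0)\le T(1,0)=0$ would give $x=0$ for all $x$. Thus the assumption that such a $U$ exists forces its identity to the boundary, contradicting the standing hypothesis that $U$ is a uninorm with $e\in(0,1)$.

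I expect the boundary case to be the main obstacle. The sandwich argument only establishes $e=1$, and at $e=1$ a uninorm degenerates into an ordinary $t$-norm; indeed the drastic product $T_D$, being the least $t$-norm, does satisfy $T_D\le T$ for every $T$. Consequently the content of the proposition is really that $U$ cannot be a \emph{proper} uninorm, and the argument must explicitly invoke the convention $e\in(0,1)$ to close the gap. Making this hypothesis precise, and thereby correctly handling the degenerate endpoints, is where the care is required.
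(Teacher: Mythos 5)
Your proof is correct and rests on the same core mechanism as the paper's: reduce both defining conditions to the pointwise inequality $U(x,y)\le T(x,y)\le \min(x,y)$ and then let the identity axiom $U(\cdot,e)=\cdot$ collide with it. The paper does this in one stroke: it evaluates at $x=e$, $y>e$, where $U(e,y)=y>e=\min(e,y)$, and stops. Your route is slightly longer --- you evaluate at $(x,e)$ to force $T(x,e)=x$ for all $x$, then pin $e=1$ via $T(1,e)=e$ and rule out $e=0$ separately --- but it buys something the paper's argument lacks: an explicit accounting of the boundary. The paper's contradiction presupposes that some $y>e$ exists, i.e.\ $e<1$, yet under its own Definition of uninorm $e$ may equal $1$, in which case the statement is actually \emph{false}: any $t$-norm below $T$ (e.g.\ $T_D$, or $T$ itself) is a uninorm with $e=1$ satisfying $U(\sigma(x),\sigma(y))\le\sigma(T(x,y))$ and $\sigma(1)=1$. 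Your closing observation that the proposition only holds for proper uninorms ($e\in(0,1)$), and that this hypothesis must be stated rather than left implicit, is a genuine correction to the paper's formulation, not merely a stylistic difference.
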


	\begin{proof}
	Assume that there exists uninorm $U$ with identity element $e$ that $\sigma$ is a $U$-fuzzy $t$-subnorm of $([0,1],T)$, then
	$$U(\sigma(x),\sigma(y))\leq \sigma(T(x,y)).$$
	Furthermore, it holds that
	$$U(x,y)\leq T(x,y)\leq \textnormal{min}(x,y),$$
	which contradicts with the case of $x=e, y>e$.
\end{proof}

\begin{proposition}
	Let $\sigma(x)=1-x$ be a fuzzy subset, $S$ be a $t$-conorm. There is no uninorm $U$ that $\sigma$ is a $U$-fuzzy $t$-subconorm of $([0,1],S)$.
\end{proposition}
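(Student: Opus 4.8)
The plan is to mirror, in dual form, the argument just used for the preceding proposition on $\sigma(x)=x$, interchanging the roles of $\max$ and $\min$. First I would argue by contradiction: suppose there is a uninorm $U$ with identity element $e$ for which $\sigma(x)=1-x$ is a $U$-fuzzy $t$-subconorm of $([0,1],S)$. Since $0$ is the identity of $([0,1],S)$ and $\sigma(0)=1-0=1$, the normalization condition holds automatically, so any contradiction must come from the inequality
$$U(\sigma(x),\sigma(y))\le \sigma(S(x,y))\qquad \text{for all }x,y\in[0,1].$$
Substituting $\sigma(t)=1-t$ turns this into $U(1-x,1-y)\le 1-S(x,y)$.

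The key step is to collapse the right-hand side using the universal lower bound $S(x,y)\ge \textnormal{max}(x,y)$ valid for every $t$-conorm. This gives $1-S(x,y)\le 1-\textnormal{max}(x,y)=\textnormal{min}(1-x,1-y)$, so after the change of variables $a=1-x$, $b=1-y$ (which together range over all of $[0,1]^2$) the hypothesis forces
$$U(a,b)\le \textnormal{min}(a,b)\qquad \text{for all }a,b\in[0,1].$$
Thus any such $U$ would have to lie weakly below the minimum everywhere, which clashes with the neutral element.

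Finally I would extract the contradiction from the identity $e$. Taking $b=e$ and any $a$ with $e<a\le 1$ yields $U(a,e)=a$ on the left but $\textnormal{min}(a,e)=e<a$ on the right, an impossibility. Translated back, this is precisely the case $y=1-e$ and $x<1-e$: there $\sigma(y)=e$, so $U(\sigma(x),\sigma(y))=\sigma(x)=1-x>e$, while $S(x,1-e)\ge 1-e$ gives $\sigma(S(x,y))\le e$, contradicting the required inequality. Hence no uninorm $U$ makes $\sigma(x)=1-x$ a $U$-fuzzy $t$-subconorm of $([0,1],S)$.

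I expect the only point needing care to be the reduction step: one must invoke the generic bound $S\ge\textnormal{max}$ rather than any feature of the particular $S$, which is exactly what makes the conclusion hold uniformly in $S$. Beyond that, the neutral-element contradiction is available as soon as there is room to pick $a>e$, i.e. for a genuine uninorm with $e\in(0,1)$ (equivalently $e<1$); the argument is otherwise the exact dual of the preceding proposition and involves no heavy computation.
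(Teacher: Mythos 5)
Your proof is correct and takes essentially the same route as the paper's: the paper likewise substitutes $\sigma$ into the submonoid inequality to obtain $U(1-x,1-y)\le 1-S(x,y)\le 1-\max(x,y)$ and then derives the contradiction at the identity element using the case $x=1-e$, $y<1-e$, which is exactly your choice $y=1-e$, $x<1-e$ with the roles of $x$ and $y$ swapped. Your closing caveat that the argument needs room to pick a value above $e$ (i.e.\ $e<1$) applies equally to the paper's own proof, which silently assumes the same.
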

\begin{proof}
	Assume that there exists uninorm $U$ with identity element $e$ that $\sigma$ is a $U$-fuzzy $t$-subconorm of $([0,1],S)$, then
	$$U(\sigma(x),\sigma(y))\leq \sigma(S(x,y)).$$
	Furthermore, it holds that
	$$U(1-x,1-y)\leq 1-S(x,y)\leq 1-\textnormal{max}(x,y),$$
	which contradicts with the case of $x=1-e, y< 1-e$.
\end{proof}

Next, a sufficient and necessary condition for fuzzy subset $\sigma$ to be $U$-fuzzy $t$-subnorm of $([0, 1], T)$ (resp. $U$-fuzzy $t$-subconorm of $([0, 1], S)$) is explored based on representable uninorms and continuous Archimedean $t$-norms (resp. $t$-conorm).
\begin{proposition}
	Let $U$ be representable uninorms and $T$ be continuous Archimedean $t$-norms. $h$ and $t$  are additive generators of $U$ and $T$, respectively. A fuzzy subset $\sigma$ on $[0, 1]$ is a $U$-fuzzy $t$-subnorm of $([0, 1], T)$ if and only if the mapping $f:[-\infty, \infty] \to [-\infty, \infty]: $
	$$
	f=(-h) \circ \sigma \circ t^{[-1]}
	$$
	is subadditive.
\end{proposition}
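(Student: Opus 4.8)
The plan is to translate the defining inequality of a $U$-fuzzy $t$-subnorm into a statement about the transformed function $f$ by using the additive generators $h$ and $t$ to linearize both $U$ and $T$. First I would record that $\sigma$ is a $U$-fuzzy $t$-subnorm of $([0,1],T)$ exactly when $\sigma(1)=1$ (as $1$ is the identity of $([0,1],T)$) together with
$$U(\sigma(x),\sigma(y)) \le \sigma(T(x,y)) \quad \text{for all } x,y \in [0,1].$$
Using the representations $U(a,b)=h^{-1}(h(a)+h(b))$ and $T(x,y)=t^{[-1]}(t(x)+t(y))$, and the fact that $h$ is a strictly increasing bijection of $[0,1]$ onto $[-\infty,+\infty]$ (so applying $h$ preserves the inequality), this is equivalent to
$$h(\sigma(x)) + h(\sigma(y)) \le h\bigl(\sigma(t^{[-1]}(t(x)+t(y)))\bigr).$$

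Next I would substitute $s_1 = t(x)$ and $s_2 = t(y)$. Since $t$ is a continuous strictly decreasing bijection with $t(1)=0$, we have $t^{[-1]}(t(x)) = x$ for every $x \in [0,1]$, so $x = t^{[-1]}(s_1)$, $y = t^{[-1]}(s_2)$, and $T(x,y) = t^{[-1]}(s_1 + s_2)$ for all $s_1,s_2$, including the nilpotent case where $s_1+s_2$ exceeds $t(0)$ (there both sides equal $0$ by the pseudo-inverse convention). Multiplying the inequality by $-1$ and writing $f = (-h) \circ \sigma \circ t^{[-1]}$, it becomes precisely
$$f(s_1 + s_2) \le f(s_1) + f(s_2),$$
which is the subadditivity of $f$. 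Since every step is reversible—$h$ being a bijection onto $[-\infty,+\infty]$ and $t$ a bijection onto $[0,t(0)]$—this yields both implications simultaneously; the boundary requirement $\sigma(1)=1$ corresponds to the value $f(0) = -h(\sigma(1)) = -h(1) = -\infty$ at the left endpoint of the domain of $f$.

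The main obstacle is the behaviour at the exceptional points of the representable uninorm, namely $(\sigma(x),\sigma(y)) \in \{(0,1),(1,0)\}$, where the identity $U(a,b)=h^{-1}(h(a)+h(b))$ breaks down and the prescribed value $U(0,1)=U(1,0)\in\{0,1\}$ must be used directly; correspondingly $f$ may take the values $\pm\infty$, so one has to fix a convention for the indeterminate sum $(+\infty)+(-\infty)$ and check that the subadditivity inequality is still the faithful transcription of $U(\sigma(x),\sigma(y)) \le \sigma(T(x,y))$ at these points. I would dispatch these cases by hand: when $\{\sigma(x),\sigma(y)\}=\{0,1\}$ one tests the inequality against the given value of $U(0,1)$ and reads off that it matches subadditivity under the adopted convention. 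A secondary bookkeeping point is the nilpotent regime $t(0)<\infty$, where $t^{[-1]}$ is constant on $[t(0),\infty]$; there one verifies that subadditivity of $f$ on that tail is automatic and compatible with $T(x,y)=0$.
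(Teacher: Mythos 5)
Your proposal is correct and follows essentially the same route as the paper's own proof: rewrite $U(\sigma(x),\sigma(y))\le\sigma(T(x,y))$ via the additive generators, pass from $h$ to $-h$ to reverse the inequality, and substitute $a=t(x)$, $b=t(y)$ to read off subadditivity of $f=(-h)\circ\sigma\circ t^{[-1]}$. The only difference is that you explicitly flag and dispatch the edge cases (the exceptional points $(0,1)$, $(1,0)$ of the representable uninorm, the nilpotent tail where $t^{[-1]}$ is constant, and the boundary condition $\sigma(1)=1$), which the paper's proof silently glosses over.
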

\begin{proof}
	According to Definition~\ref{d:U-fuzzy_t-subnorm_T}, $\sigma$ is a $U$-fuzzy $t$-subnorm of $([0, 1], T)$ if and only if for all $x, y \in [0, 1]$
	$$
	U(\sigma(x), \sigma(y)) \leq \sigma\left(T(x, y)\right).
	$$
	Further, it can be denoted as
	$$
	h^{[-1]}(h(\sigma(x))+h(\sigma(y))) \leq \sigma\left(t^{[-1]}(t(x)+t(y))\right).
	$$
	Since
	$$
	(-h)^{[-1]}((-h)(\sigma(x))+(-h)(\sigma(y)))=h^{[-1]}\left(h(\sigma(x))+h(\sigma(y))\right)
	,$$ then
	$$
	(-h)^{[-1]}((-h)(\sigma(x))+(-h)(\sigma(y))) \leq \sigma\left(t^{[-1]}(t(x)+t(y))\right),
	$$
	which is equivalent to
	$$
	(-h)(\sigma(x))+(-h)(\sigma(y)) \geq (-h)\left(\sigma\left(t^{[-1]}(t(x)+t(y))\right)\right).
	$$
	Let $t(x)=a$ and $t(y)=b$,
	\begin{align*}
	(-h)\left(\sigma\left(t^{[-1]}(a)\right)\right)+(-h)\left(\sigma\left(t^{[-1]}(b)\right)\right) \geq h\left(\sigma\left(t^{[-1]}(a+b)\right)\right).
	\end{align*}	
\end{proof}

\begin{corollary}
	Let $f : [0, \infty] \rightarrow[-\infty, \infty]$ be a subadditive mapping and representable uninorms $U$ and continuous Archimedean $t$-norms $T$ have additive generators $h$ and $t$, respectively. Then
	$$
	(-h)^{[-1]} \circ f \circ t
	$$
	is a $U$-fuzzy $t$-subnorm of $([0, 1], T)$.
\end{corollary}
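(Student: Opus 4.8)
The plan is to treat the corollary as the exact converse of the preceding proposition. That proposition characterises a $U$-fuzzy $t$-subnorm $\sigma$ by subadditivity of $(-h)\circ\sigma\circ t^{[-1]}$; here I am instead handed a subadditive $f$ and must manufacture a suitable $\sigma$. So I would take $\sigma:=(-h)^{[-1]}\circ f\circ t$ as in the statement and, ideally, show that running this $\sigma$ through the proposition's construction returns $f$, i.e.\ $(-h)\circ\sigma\circ t^{[-1]}=f$; subadditivity of $f$ would then be precisely the hypothesis the proposition needs, and $\sigma$ would be a $U$-fuzzy $t$-subnorm of $([0,1],T)$.

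The first half of that reconstruction is clean. Since $h$ is a continuous strictly increasing bijection of $[0,1]$ onto $[-\infty,+\infty]$ with $h(e)=0$, the map $-h$ is a continuous strictly \emph{decreasing} bijection onto $[-\infty,+\infty]$, so its pseudo-inverse is a genuine inverse, $(-h)^{[-1]}=(-h)^{-1}$ with $(-h)^{-1}(w)=h^{-1}(-w)$, and $(-h)\circ\sigma=f\circ t$ holds outright. The second half, $t\circ t^{[-1]}$, is where care is needed, so rather than compose abstractly I would verify the defining inequality directly. Writing $U$ through its additive generator gives $U(\sigma(x),\sigma(y))=(-h)^{-1}\big(f(t(x))+f(t(y))\big)$ and $\sigma(T(x,y))=(-h)^{-1}\big(f(t(T(x,y)))\big)$; since $(-h)^{-1}$ is decreasing, the desired $U(\sigma(x),\sigma(y))\le\sigma(T(x,y))$ is equivalent to $f(t(x))+f(t(y))\ge f\big(t(T(x,y))\big)$.

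The hard part is the behaviour of $t\circ t^{[-1]}$ inside $t(T(x,y))$. If $T$ is strict then $t(0)=+\infty$, so $t(T(x,y))=t(x)+t(y)$, and the inequality is literally subadditivity of $f$ at $a=t(x),b=t(y)$, closing the argument. If $T$ is nilpotent then $t(0)<\infty$ and $t(T(x,y))=\min(t(x)+t(y),t(0))$, so I must show $f(a)+f(b)\ge f(\min(a+b,t(0)))$ for $a,b\in[0,t(0)]$: when $a+b\le t(0)$ this is subadditivity again, but when $a+b>t(0)$ I would need $f(t(0))\le f(a+b)\le f(a)+f(b)$, i.e.\ monotonicity of $f$ layered on top of subadditivity. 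This is the step I expect to be delicate, since bare subadditivity does not force monotonicity; an honest version of the corollary therefore wants either $T$ strict, or $f$ non-decreasing. I would finish by recording the normalization: as $t(1)=0$ we get $\sigma(1)=(-h)^{-1}(f(0))$, which equals $1$ exactly when $f(0)=-\infty$, consistent with the convention under which the preceding proposition suppresses the condition $\sigma(1)=1$.
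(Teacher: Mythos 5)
Your proposal is correct and, in fact, more careful than the paper: the paper states this corollary with no proof at all, presenting it as an immediate consequence of the preceding proposition. Your route is the intended one, but carried out honestly. Setting $\sigma=(-h)^{[-1]}\circ f\circ t$, the proposition would apply to $(-h)\circ\sigma\circ t^{[-1]}=f\circ\left(t\circ t^{[-1]}\right)$, and since $t\circ t^{[-1]}(z)=\min\left(z,t(0)\right)$ this composition equals $f$ only when $T$ is strict; your direct verification of the generator inequality $f(t(x))+f(t(y))\ge f\left(t(T(x,y))\right)$ hits exactly the same dichotomy. Moreover, the delicate step you flag in the nilpotent case is a genuine error in the paper, not a defect of your argument: take $T=T_L$ with $t(x)=1-x$ and the additive (hence subadditive) map $f(z)=-z$; for $x+y<1$ the required inequality reads $f(1)\le f(1-x)+f(1-y)$, i.e.\ $-1\le -(2-x-y)$, which is false, so $(-h)^{[-1]}\circ f\circ t$ fails to be a $U$-fuzzy $t$-subnorm for \emph{every} representable $U$ (the reduction through $h^{-1}$ is independent of which generator $h$ is chosen, away from the corner points $(0,1)$ and $(1,0)$, which this $\sigma$ never produces). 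So the corollary as printed needs exactly the repair you propose: either $T$ strict, or $f$ non-decreasing, monotonicity supplying the chain $f(t(0))\le f(a+b)\le f(a)+f(b)$. It is telling that the paper's own Example~\ref{3.4} pairs the nilpotent $T_L$ with the non-decreasing $f(z)=\sqrt{z}$, i.e.\ it satisfies your strengthened hypotheses rather than the stated ones. Your closing remark on normalization is also right and consistent with the paper's conventions: $\sigma(1)=1$ forces $f(0)=-\infty$, a condition the paper silently drops here just as it drops $\sigma(1)=1$ in the proposition's proof.
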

\begin{example}\label{3.4}
	Let $T_L$ be the {\L}ukasiewicz $t$-norm with additive generator $t$, where $t(x)=1-x$. $U_p$ is a uninorm with additive generator $h$,
	$$	 h(x)=\left\{
	\begin{aligned}
	&\textnormal{ln}(2x), &0\leq x <\frac{1}{2}, \\
	&-\textnormal{ln}(-2x+2), & \frac{1}{2} \leq x \leq 1,
	\end{aligned}
	\right.
	$$ and $ f(x)=\sqrt{x}$. At this moment, $u(x)=h^{-1}\circ f \circ t(x)$ is the $U_p$-fuzzy $t$-subnorm of $T_L$ and $U_p$ as shown in Figure. \ref{fig1}.
	\begin{figure}[h]
		\centering
		\includegraphics[width=8cm,height=6cm]{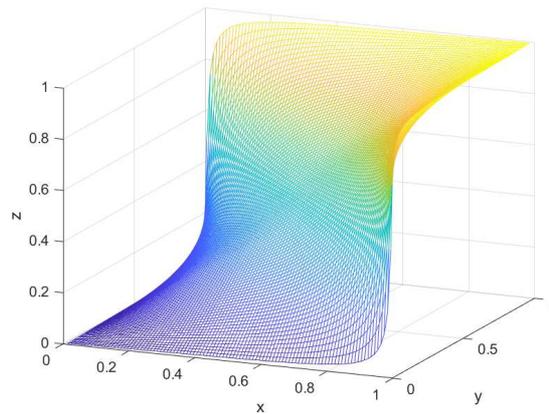}
		\caption{Three-dimensional image of $U_p(x,y)=h^{-1}(h(x)+h(y))$}
		\label{fig1}
	\end{figure}
\end{example}

\begin{example}
	Similarly, if we substitute the addition generator of  $U_p$ in Example \ref{3.4} with the following form,
	$$	 h(x)=\left\{
	\begin{aligned}
	&1-\frac{1}{2x}, &0\leq x \leq\frac{1}{2}, \\
	&-\frac{1}{2(x-1)}-1, & \frac{1}{2}< x \leq 1.
	\end{aligned}
	\right.
	$$
	Then $u(x)=h^{-1}\circ f \circ t(x)$ is the $U_p$-fuzzy $t$-subnorm of $T_L$ and $U_p$ as shown in Figure. \ref{fig2}.
\end{example}

\begin{figure}[h]
	\centering
	\includegraphics[width=8cm,height=6cm]{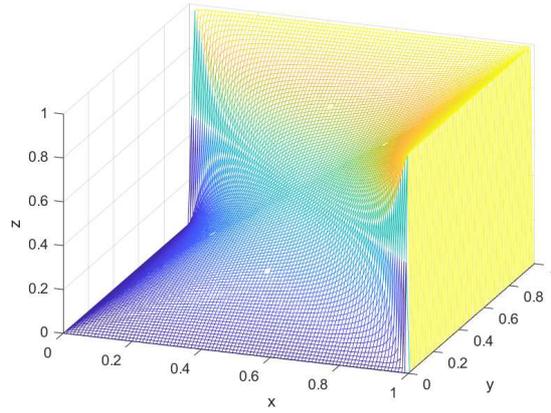}
	\caption{Three-dimensional image of $U_p(x,y)=h^{-1}(h(x)+h(y))$}
	\label{fig2}
\end{figure}

\begin{proposition}
	Let $U$ be representable uninorms and $S$ be continuous Archimedean $t$-conorms. $h$ and $s$ are additive generators of $U$ and $S$, respectively. A fuzzy subset $\sigma$ of $[0,1]$ is a $U$-fuzzy $t$-subconorm of $([0, 1], S)$ if and only if the mapping $	f:[0, \infty] \rightarrow[-\infty, \infty]:$
	\begin{align*}
	f=h \circ \sigma \circ s^{[-1]}
	\end{align*}
	is subadditive.
\end{proposition}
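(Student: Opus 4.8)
The plan is to mirror, step for step, the proof of the preceding proposition for the $t$-subnorm case, since the uninorm side of the argument is identical and only the conorm generator changes. By Definition~\ref{d:U-fuzzy_t-subconorm_S}, $\sigma$ is a $U$-fuzzy $t$-subconorm of $([0,1],S)$ precisely when $\sigma(0)=1$ (the identity of the monoid $([0,1],S)$ is $0$) and
$$U(\sigma(x),\sigma(y))\le \sigma(S(x,y))\qquad\text{for all }x,y\in[0,1].$$
First I would substitute the representable form $U(u,v)=h^{-1}(h(u)+h(v))$ on the left and the Archimedean form $S(x,y)=s^{[-1]}(s(x)+s(y))$ on the right, reducing the defining inequality to
$$h^{-1}\bigl(h(\sigma(x))+h(\sigma(y))\bigr)\le \sigma\bigl(s^{[-1]}(s(x)+s(y))\bigr).$$

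Next I would strip off $h^{-1}$ and perform the change of variables $a=s(x)$, $b=s(y)$ (so $x=s^{[-1]}(a)$, $y=s^{[-1]}(b)$), exactly as in the $t$-subnorm proof. Since $h$ is continuous and strictly increasing, applying $h$ to both sides preserves the inequality and gives $h(\sigma(x))+h(\sigma(y))\le h(\sigma(S(x,y)))$; rewriting in terms of $f=h\circ\sigma\circ s^{[-1]}$ then yields
$$f(a)+f(b)\le f(a+b)\qquad\text{for all }a,b\in[0,\infty].$$
The converse direction runs the same chain backwards: unfolding $U$ and $S$, applying the strictly monotone $h$ and $h^{-1}$, and the bijective substitution $a=s(x)$ are each reversible, so the inequality on $f$ is equivalent to the defining inequality on $\sigma$. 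The boundary requirement $\sigma(0)=1$ translates, via $s^{[-1]}(0)=0$, into $f(0)=h(\sigma(0))=h(1)=+\infty$, which I would record separately as the counterpart of the $\sigma(0)=1$ clause (noting that this endpoint value cannot be naively combined with the inequality at $a=0$, so the additivity relation is really asserted on the interior).

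The main obstacle — and here I must be candid — is the sign of the resulting inequality. The relation obtained above, $f(a+b)\ge f(a)+f(b)$, is \emph{superadditivity} of $f$, not subadditivity; an honest tracking of the monotonicity of $h$ does not produce $f(a+b)\le f(a)+f(b)$. This is exactly where the $t$-conorm statement diverges from its $t$-subnorm analogue: in that earlier proposition the generator enters as $-h$, and because $-h$ is strictly \emph{decreasing} its application flips the inequality and genuinely delivers subadditivity. Since the uninorm side is identical in both results, consistency forces the same twist here: to obtain the stated subadditivity one must take $f=(-h)\circ\sigma\circ s^{[-1]}$, whereas with $f=h\circ\sigma\circ s^{[-1]}$ as written the correct conclusion is that $f$ is superadditive. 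I would therefore either replace $h$ by $-h$ in the definition of $f$ (restoring the claimed subadditivity) or restate the conclusion as superadditivity; I would not dress up the superadditive inequality as the subadditive one it is not.
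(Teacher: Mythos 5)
Your derivation is essentially the paper's own argument: the paper proves the $t$-subnorm analogue by exactly the chain you reproduce (substitute the representable form of $U$ and the Archimedean form of the norm, strip the outer generator, change variables $a=s(x)$, $b=s(y)$) and states the present $t$-subconorm proposition without any separate proof, leaving it to that same computation. Your sign diagnosis is correct and is the valuable part of your review: since $h$ is strictly increasing, applying it to $h^{-1}\bigl(h(\sigma(x))+h(\sigma(y))\bigr)\le\sigma\bigl(s^{[-1]}(s(x)+s(y))\bigr)$ yields $f(a)+f(b)\le f(a+b)$, i.e.\ \emph{super}additivity of $f=h\circ\sigma\circ s^{[-1]}$, and subadditivity as claimed requires $f=(-h)\circ\sigma\circ s^{[-1]}$, exactly as in the paper's $t$-subnorm proposition, where the identity $(-h)^{[-1]}\bigl((-h)(u)+(-h)(v)\bigr)=h^{[-1]}\bigl(h(u)+h(v)\bigr)$ together with the decreasingness of $-h$ supplies the inequality flip. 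The corollary immediately following the statement, which constructs the subconorm as $(-h)^{[-1]}\circ f\circ s$ from a subadditive $f$, corroborates that $(-h)$ is the intended generator and that the statement as printed contains a sign slip rather than your proof containing a gap. Your endpoint caveat is also apt: $\sigma(0)=1$ gives $f(0)=h(1)=+\infty$ (resp.\ $(-h)(1)=-\infty$ in the corrected version), and the substitution $a=s(x)$ only covers $a\in[0,s(1)]$, so the additivity inequality must be read with extended-real conventions; the paper glosses over both points in the $t$-subnorm proof as well.
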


\begin{corollary}
	Let $f:[0, \infty] \rightarrow[-\infty, \infty]$ be a subadditive mapping, representable uninorms $U$ and continuous Archimedean $t$-conorms $S$ have additive generators $h$ and $s$, respectively. Then
	$$
	(-h)^{[-1]} \circ f \circ s
	$$
	is a $U$-fuzzy $t$-subconorm of $([0, 1], S)$.	
\end{corollary}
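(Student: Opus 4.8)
The plan is to read the corollary off from the immediately preceding proposition, its characterization companion; essentially no new generator computation is needed beyond a single cancellation. I would set
$$\sigma:=(-h)^{[-1]}\circ f\circ s,$$
first checking that this is a genuine fuzzy subset of $[0,1]$: the additive generator $s$ sends $[0,1]$ into $[0,s(1)]\subseteq[0,\infty]$, $f$ maps into $[-\infty,\infty]$, and the pseudo-inverse $(-h)^{[-1]}$ returns a value in $[0,1]$, so $\sigma:[0,1]\to[0,1]$ is well defined. By the proposition, $\sigma$ is a $U$-fuzzy $t$-subconorm of $([0,1],S)$ exactly when the generator composite of $\sigma$ built in that proposition is subadditive, so the entire task reduces to verifying that this composite, evaluated at the particular $\sigma$ above, is subadditive.

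For that step I would substitute the definition of $\sigma$ into the proposition's generator composite and simplify. Taking the composite in the order-reversing form $(-h)\circ\sigma\circ s^{[-1]}$ (the same orientation as in the dual $t$-subnorm proposition, where the generator $-h$ rather than $h$ is what makes the inequality come out subadditive), the outer $-h$ cancels the pseudo-inverse via $(-h)\circ(-h)^{[-1]}=\mathrm{id}$, while $s\circ s^{[-1]}$ is the identity on $[0,s(1)]$; the composite then collapses to $f$ itself, whose subadditivity is exactly the hypothesis. In the nilpotent case $s(1)<\infty$, $s\circ s^{[-1]}$ is instead the truncation $t\mapsto\min(t,s(1))$, and precomposition with it preserves subadditivity because $\min(\cdot,s(1))$ is non-decreasing and subadditive and $f$ is non-decreasing. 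I would then record the normalization $\sigma(0)=(-h)^{[-1]}\bigl(f(s(0))\bigr)=(-h)^{[-1]}(f(0))$ and note that it equals $1$ exactly when $f$ has the boundary value at $0$ that sends $(-h)^{[-1]}$ to $1$.

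The main obstacle is the sign and pseudo-inverse bookkeeping, which is the one place where the $t$-subconorm statement must be kept carefully aligned with the $t$-subnorm one. Because $h$ is strictly increasing with $h(0)=-\infty$, $h(1)=+\infty$, the map $-h$ is order-reversing, and it is precisely this reversal — visible in the $(-h)^{[-1]}$ of the construction — that turns the inequality obtained after clearing the uninorm generator into subadditivity rather than superadditivity; concretely I would verify the identities $(-h)\circ(-h)^{[-1]}=\mathrm{id}$ and $h\circ(-h)^{[-1]}=-\mathrm{id}$ and be sure to use the first. A secondary difficulty is the normalization: unlike the structural inequality, $\sigma(0)=1$ is not automatic for an arbitrary subadditive $f$ (a finite $f(0)$ would give $\sigma(0)=e$), so I would either impose the appropriate boundary behaviour of $f$ at $0$ or read normalization as the side condition the proposition's characterization silently carries. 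Methodologically I would present the strict case $s(1)=\infty$ first, as the clean one-line cancellation, and append the nilpotent truncation as a short remark.
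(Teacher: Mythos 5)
Your proposal is, in approach, exactly what the paper intends: the paper states this corollary with no proof at all, leaving it as an immediate consequence of the preceding characterization proposition, and your substitution-and-cancellation argument (set $\sigma=(-h)^{[-1]}\circ f\circ s$, feed it into the generator composite, cancel $(-h)\circ(-h)^{[-1]}=\mathrm{id}$ and $s\circ s^{[-1]}$) is that derivation carried out with more care than the paper supplies anywhere. Your sign bookkeeping is also the correct reading: the paper's subconorm proposition literally asserts that $h\circ\sigma\circ s^{[-1]}$ is subadditive, which is inconsistent both with its own subnorm analogue and with the $(-h)^{[-1]}$ appearing in this corollary (substituting the corollary's $\sigma$ into $h\circ\sigma\circ s^{[-1]}$ yields essentially $-f$, hence would demand superadditivity); orienting the composite as $(-h)\circ\sigma\circ s^{[-1]}$, as you do, is the only way the proposition and the corollary fit together.

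Two caveats, both of which are defects of the paper's statement that your carefulness exposes rather than reasoning errors of yours. First, in the nilpotent case $s(1)<\infty$ you invoke ``$f$ is non-decreasing,'' which is not among the hypotheses of the corollary; and it cannot be dropped, because the corollary as literally stated is then false. Take $S=S_L$ (so $s=\mathrm{id}$ and $s(1)=1$) and the subadditive map $f(x)=-x$; then $\sigma(x)=h^{-1}(x)$, and the required inequality $U(\sigma(x),\sigma(y))\le\sigma(S_L(x,y))$ becomes $h^{-1}(x+y)\le h^{-1}\bigl(\min(x+y,1)\bigr)$, which fails whenever $x+y>1$. So your truncation argument proves a corrected statement (add monotonicity of $f$, or restrict to strict $S$), which is the best that can be done -- but you should state the extra hypothesis as an added assumption rather than use it tacitly. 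Second, the normalization $\sigma(0)=1$ that you flag is a genuine hole: it requires $f(0)=-\infty$, and then subadditivity with $b=0$, namely $f(a)\le f(a)+f(0)$, rules out any finite value of $f$, so condition (2) of the definition of a $U$-fuzzy submonoid can hold only degenerately. The paper evades this by silently checking only the inequality condition (its proof of the subnorm proposition already ignores $\sigma(1)=1$); making the issue explicit, as you do, is the right call.
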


\subsection{$F$-fuzzy submonoid}
Similar to section \ref{$U$-fuzzy submonoid}, when the aggregation operations take nullnorms, we have the following conclusions.
\begin{definition}
	Let $F$ be a nullnorm, $(M,\circ)$ a monoid with identity element $e$ and $\sigma$ a fuzzy subset of $M$. $\sigma$ is an $F$-fuzzy submonoid of $M$ if and only if $\sigma$ satisfies the following conditions:
	\begin{enumerate}[(1)]
		\item $F(\sigma(x), \sigma(y))\leq \sigma(x\circ y)$, for all $x,y\in[0,1]$;
		\item $\sigma(e)=1$.
	\end{enumerate}
\end{definition}

\begin{definition}
	Let $(M,\circ)$ be a monoid and $\sigma$ be an $F$-fuzzy submonoid of $M$. Then the core $H$ of $\sigma$(i.e., the set of elements $x$ of $M$, such that $\sigma(x)=1$) is a submonoid of $M$.
\end{definition}

\begin{proof}
	The identity element of $H$ obviously existed and the associativity is inherited.
	Let $x,y \in H$.
	$$ 1=F(\sigma(x),\sigma(y))\leq\sigma(x\circ y).$$
	Therefore, $x \circ y \in H$.
\end{proof}

\begin{definition}
	A discrete nullnorm is a submonoid of a nullnorm contanining $0$ and $1$.
\end{definition}
Let $S$ and $T$ be a $t$-conorm and a $t$-norm, respectively. The nullnorm $F=\langle S,k,T \rangle$ with absorbing element $k$ as follows
\begin{equation*}
	F(x,y)=\left\{
	\begin{aligned}
	&kS(\frac{x}{k},\frac{y}{k}), &(x,y)\in[0,k]^2\\
	&k + (1-k)T(\frac{x-k}{1-k},\frac{y-k}{1-k}), &(x,y)\in (k,1]^2\\
	&k,	&otherwise.
	\end{aligned}
	\right.
\end{equation*}

\begin{example}
	Let $F_L $ be a nullnorm where $F_L=\langle S_L,k,T_L \rangle, L_{n,m}=\{0,\frac{k}{n},\cdots,k,k+\frac{1-k}{m},\cdots,1 \}$. Then $L_{n,m}$ is a discrete uninorm of $F_L$.	
\end{example}

\begin{definition}
	Let $F$ and $T$ be a nullnorm and a $t$-norm, respectively. An $F$-fuzzy submonoid of $([0,1],T)$ will be called an $F$-fuzzy $t$-subnorm of $T$.
\end{definition}
\begin{definition}
	Let $F$ and $S$ be a uninorm and a $t$-conorm, respectively. An $F$-fuzzy submonoid of $([0,1],S)$ will be called an $F$-fuzzy $t$-subconorm of $S$.
\end{definition}

\begin{proposition}
	If $\sigma$ is an $F$-fuzzy submonoid of $M$ where $F$ is a nullnorm with absorbing element $k$ and $M$ is a monoid with identity element $e$, then $\sigma(x)\geq k$ for any $x\in[0,1]$.
\end{proposition}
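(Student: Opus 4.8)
The plan is to extract the bound $\sigma(x) \ge k$ directly from the single defining inequality of an $F$-fuzzy submonoid, by feeding it the most informative pair of arguments, namely $(x, e)$. The conclusion should be read with $x$ ranging over the monoid $M$ on which $\sigma$ is defined, so I would fix an arbitrary $x \in M$ and aim to show $\sigma(x) \ge k$.

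First I would instantiate condition (1) of the definition of an $F$-fuzzy submonoid with $y = e$, giving $F(\sigma(x), \sigma(e)) \le \sigma(x \circ e)$. Because $e$ is the identity element of $M$ we have $x \circ e = x$, and condition (2) gives $\sigma(e) = 1$, so the inequality collapses to $F(\sigma(x), 1) \le \sigma(x)$. This reduces the entire problem to showing that the left-hand side is at least $k$.

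Next I would bound $F(\sigma(x), 1)$ from below using the two structural facts about a nullnorm recorded in Definition~\ref{def4}: monotonicity in each argument (F3), and the identity $F(0,1) = k$, since the absorbing element is exactly $F(0,1)$. As $\sigma(x) \ge 0$, monotonicity in the first argument yields $F(\sigma(x), 1) \ge F(0,1) = k$. Chaining this with the inequality from the previous step gives $k \le F(\sigma(x), 1) \le \sigma(x)$, which is precisely the claim.

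There is no serious obstacle here; the statement is a two-line consequence of the definition. The only points that require care are choosing the right substitution $y = e$ (so that the right-hand side becomes $\sigma(x)$ rather than something larger), and invoking $F(0,1) = k$ together with monotonicity in the correct direction to produce the lower bound. If one preferred not to cite $F(0,1) = k$ directly, the same bound follows from the explicit form of $F$: for $\sigma(x) \le k$ the pair $(\sigma(x), 1)$ lies in the mixed region where $F \equiv k$, while for $\sigma(x) > k$ the value $F(\sigma(x),1) \ge k$ comes from the $t$-norm block on $(k,1]^2$.
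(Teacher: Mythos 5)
Your proof is correct and takes essentially the same route as the paper's: substitute $y=e$ into the defining inequality, use $\sigma(e)=1$ and $x\circ e = x$, and then lower-bound the resulting value of $F$ by $k$ via monotonicity. The only cosmetic difference is that the paper obtains the lower bound as $k = F(\sigma(x),k) \le F(\sigma(x),\sigma(e))$ (absorbing property plus monotonicity in the second argument), whereas you use $k = F(0,1) \le F(\sigma(x),1)$ (monotonicity in the first argument); these are interchangeable.
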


\begin{proof}
	If $\sigma$ is an $F$-fuzzy submonoid of $M$, then we have $$F(\sigma(x),\sigma(y))\leq \sigma(x\circ y)\ and\ \sigma(e)=1.$$
	Let $y=e$. We get that
	$$k=F(\sigma(x),k)\leq F(\sigma(x),\sigma(e))\leq \sigma(x).$$
\end{proof}

It is easy to get the following corollary when $M$ takes the $t$-norm $T$ and $t$-conorm $S$.

\begin{corollary}
	If $\sigma$ is an $F$-fuzzy $t$-subnorm of $T$ where $F$ is a nullnorm with absorbing element $k$ and $T$ is a $t$-norm, then $\sigma(x)\geq k$ for any $x\in[0,1]$.
\end{corollary}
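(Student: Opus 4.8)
The plan is to recognize this corollary as an immediate specialization of the preceding proposition, so the only substantive task is to verify that its hypotheses apply in the concrete setting $M=([0,1],T)$. The key observation is that $([0,1],T)$ is itself a monoid: by Definition~\ref{d:t-norm}, the $t$-norm $T$ is associative (condition (2)) and admits $1$ as an identity element (condition (4), since $T(x,1)=x$). Hence $([0,1],T)$ is a monoid with identity element $e=1$.

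With this identification in hand, I would next invoke the defining convention introduced just above, namely that an $F$-fuzzy $t$-subnorm of $T$ is by definition nothing other than an $F$-fuzzy submonoid of the monoid $([0,1],T)$. Therefore the assumptions of the preceding proposition are met verbatim when one takes $M=([0,1],T)$ with identity $e=1$, and applying that proposition directly delivers the desired conclusion $\sigma(x)\geq k$ for every $x\in[0,1]$.

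For completeness one may unwind the argument explicitly by setting $y=e=1$ in the submonoid inequality $F(\sigma(x),\sigma(y))\leq\sigma(x\circ y)$: since $\sigma(1)=1\geq k$ and $k$ is the absorbing element of $F$, monotonicity of $F$ gives $k=F(\sigma(x),k)\leq F(\sigma(x),\sigma(1))\leq\sigma(T(x,1))=\sigma(x)$. There is no genuine obstacle here; the whole content of the proof is the observation that a $t$-norm turns $[0,1]$ into a monoid with identity $1$, after which the result follows from the proposition with no further work.
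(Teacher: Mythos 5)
Your proposal is correct and matches the paper exactly: the paper offers no separate proof for this corollary, stating only that it follows from the preceding proposition ``when $M$ takes the $t$-norm $T$,'' which is precisely your specialization $M=([0,1],T)$ with identity $e=1$. Your explicit unwinding, $k=F(\sigma(x),k)\leq F(\sigma(x),\sigma(1))\leq\sigma(T(x,1))=\sigma(x)$, is also the same chain of inequalities the paper uses to prove the proposition itself.
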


\begin{corollary}
	If $\sigma$ is an $F$-fuzzy $t$-subconorm of $S$ where $F$ is a nullnorm with absorbing element $k$ and $S$ is a $t$-conorm, then $\sigma(x)\geq k$ for any $x\in[0,1]$.
\end{corollary}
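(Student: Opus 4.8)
The plan is to read this corollary as the specialization of the preceding proposition to the particular monoid $M=([0,1],S)$. First I would recall that any $t$-conorm $S$ equips the unit interval with a commutative monoid structure: associativity and commutativity are exactly conditions (2) and (1) of Definition~\ref{d:t-conorm}, while the boundary law $S(x,0)=x$ in condition (4) identifies $0$ as a two-sided identity (two-sided because $S$ is commutative). Hence $([0,1],S)$ is a monoid whose identity element is $e=0$, and the constant $k\in[0,1]$ supplied by the nullnorm $F$ is available as before.

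Second, I would invoke the definition stated immediately above the corollary, which declares an $F$-fuzzy $t$-subconorm of $S$ to be precisely an $F$-fuzzy submonoid of $([0,1],S)$. Consequently the hypothesis of the corollary is literally the hypothesis of the preceding proposition, with this concrete choice of $M$ and $e=0$. Applying that proposition then delivers $\sigma(x)\ge k$ for every $x\in[0,1]$, which is the assertion.

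Should a self-contained argument be preferred, I would simply replay the proposition's proof in this setting. Since $e=0$ and $\sigma$ satisfies $\sigma(0)=1\ge k$, monotonicity of $F$ combined with the absorbing property $F(\sigma(x),k)=k$ gives $k=F(\sigma(x),k)\le F(\sigma(x),\sigma(0))$; the submonoid inequality applied with $y=0$ then yields $F(\sigma(x),\sigma(0))\le \sigma(S(x,0))=\sigma(x)$. Chaining the two inequalities produces $k\le \sigma(x)$ for all $x$.

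There is essentially no genuine obstacle here, since the result is a direct corollary. The single point deserving care is confirming that the identity element of the $t$-conorm monoid is $0$ rather than $1$, so that the substitution $y=e=0$ is the correct one and the absorbing/monotonicity step applies verbatim; the companion $t$-norm corollary instead uses $e=1$, so I would be careful to cite the correct identity for each case.
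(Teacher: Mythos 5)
Your proposal is correct and matches the paper exactly: the paper obtains this corollary by specializing the preceding proposition to the monoid $M=([0,1],S)$, noting only that "it is easy to get" when $M$ takes the $t$-conorm $S$. Your identification of the identity element $e=0$ and your optional replay of the proposition's proof (chaining $k=F(\sigma(x),k)\le F(\sigma(x),\sigma(0))\le \sigma(S(x,0))=\sigma(x)$) are precisely the details the paper leaves implicit.
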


Let $S$ and $T$ be a $t$-conorm and a $t$-norm, $F_M$ denoted the nullnorms $F$ with absorbing element $k$ as follows
\begin{equation*}
	F_M(x,y)=\left\{
	\begin{aligned}
	&kS(\frac{x}{k},\frac{y}{k}), &(x,y)\in[0,k]^2\\
	&k+(1-k)T_M(\frac{x-k}{1-k},\frac{y-k}{1-k}),&(x,y)\in (k,1]^2\\
	&k,	&otherwise.
	\end{aligned}
	\right.
\end{equation*}

\begin{proposition}\label{F}
	A fuzzy subset $\sigma$ is an $F_M$-fuzzy $t$-subnorm of $([0,1],T_M)$ where $F_M$ is a nullnorm with absorbing element $k$ if and only if $\sigma(1)=1$, $\sigma(x)\geq k $ for any $x\in[0,1]$.
\end{proposition}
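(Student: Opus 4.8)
The plan is to verify the two defining conditions of an $F_M$-fuzzy $t$-subnorm directly, after noting that the identity element of the monoid $([0,1],T_M)$ is $1$, so condition (2), $\sigma(e)=1$, reads $\sigma(1)=1$. I would treat the two implications separately.

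For the forward implication, $\sigma(1)=1$ is immediate from condition (2). The bound $\sigma(x)\ge k$ for every $x$ is the content of the Proposition (and its Corollary) established above for $F$-fuzzy submonoids: setting $y=e=1$ gives
\begin{align*}
k=F_M(\sigma(x),k)\le F_M(\sigma(x),\sigma(1))\le \sigma(\min(x,1))=\sigma(x),
\end{align*}
using that $k$ is absorbing, that $F_M$ is non-decreasing with $\sigma(1)=1\ge k$, and the submonoid inequality. I would simply invoke that result rather than reprove it.

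For the converse, assume $\sigma(1)=1$ and $\sigma(x)\ge k$ for all $x$. Condition (2) holds by hypothesis, so only the submonoid inequality $F_M(\sigma(x),\sigma(y))\le \sigma(\min(x,y))$ remains, and I would split it according to the position of $\sigma(x),\sigma(y)$ relative to $k$. If either value equals $k$, then $F_M(\sigma(x),\sigma(y))=k$ because $k$ is absorbing, while $\sigma(\min(x,y))\ge k$ by hypothesis, so the inequality holds. If instead both values lie in $(k,1]$, then using $T_M=\min$ I would compute
\begin{align*}
F_M(\sigma(x),\sigma(y)) &= k+(1-k)\min\!\Big(\tfrac{\sigma(x)-k}{1-k},\tfrac{\sigma(y)-k}{1-k}\Big)\\
&= \min(\sigma(x),\sigma(y)).
\end{align*}

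It then remains to check $\min(\sigma(x),\sigma(y))\le \sigma(\min(x,y))$, and this is where the special feature of $T_M$ does all the work: since $\min(x,y)\in\{x,y\}$, the value $\sigma(\min(x,y))$ equals either $\sigma(x)$ or $\sigma(y)$, each of which dominates $\min(\sigma(x),\sigma(y))$. Hence the inequality holds with no monotonicity hypothesis on $\sigma$, exactly as in the earlier $A=\min$ proposition. I expect no genuine obstacle; the only points requiring care are placing each pair $(\sigma(x),\sigma(y))$ in the correct branch of the piecewise definition of $F_M$ (in particular the boundary case $\sigma(x)=k$) and checking that the degenerate endpoints $k=0$ and $k=1$ collapse to the already-known $\min$- and $t$-conorm cases.
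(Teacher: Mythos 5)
Your proof is correct and takes essentially the same route as the paper's: the forward direction follows from the identity condition and the earlier proposition that $F$-fuzzy submonoids satisfy $\sigma(x)\ge k$, and the converse is a case analysis over the branches of $F_M$ combined with the observation that $\sigma(T_M(x,y))\in\{\sigma(x),\sigma(y)\}$, so it dominates $\min(\sigma(x),\sigma(y))$. If anything, your version is slightly more careful than the paper's, which states its cases in terms of the position of $(x,y)$ rather than of the actual arguments $(\sigma(x),\sigma(y))$ of $F_M$; your split by whether either value equals $k$ or both lie in $(k,1]$ is the correct (and, under the hypothesis $\sigma\ge k$, exhaustive) way to organize the argument.
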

\begin{proof}
	Firstly, if  $\sigma$ is an $F_M$-fuzzy $t$-subnorm of $([0,1],T_{\textnormal{min}})$, then we obviously have  $\sigma(1)=1$ and $\sigma(x)\geq k$ for any $x\in[0,1]$.
	
	Conversely, let $\sigma(1)=1$, $\sigma(x)\geq k $ for any $x\in[0,1]$. Then the discussion will be divided into three cases:
	\begin{enumerate}[(1)]
		\item
		If $(x,y)\in [0,k]^2$,
		\begin{align*}
		F_M(\sigma(x),\sigma(y))& =kS(\frac{\sigma(x)}{k},\frac{\sigma(y)}{k})\\
		& \leq k \\
		& \leq \sigma(T_M(x,y)).
		\end{align*}
		\item
		If $(x,y)\in [k,1]^2$,
		\begin{align*}
		F_M(\sigma(x),\sigma(y))& =	k+(1-k)T_M(\frac{\sigma(x)-k}{1-k},\frac{\sigma(y)-k}{1-k})\\ &=\textnormal{min}(\sigma(x),\sigma(y))\\
		& \leq \sigma(T_M(x,y)).
		\end{align*}	
		\item In the other cases, $ F_M(\sigma(x),\sigma(y))\leq \sigma(\sigma(x),\sigma(y))$ clearly holds.
	\end{enumerate}
\end{proof}

Then the dual conclusions about $F_M$-fuzzy $t$-subnorm of $([0,1],S_M)$ can be obtained.
\begin{proposition}
	A fuzzy subset $\sigma$ is an $F_M$-fuzzy $t$-subnorm of $([0,1],S_M)$ where $F_M$ is a nullnorm with absorbing element $k$ if and only if $\sigma(0)=1$, $\sigma(x)\geq k $ for any $x\in[0,1]$.
\end{proposition}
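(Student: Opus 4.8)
The plan is to establish the two implications separately, following the pattern of Proposition~\ref{F} but with the roles of the identity element and of $\min$/$\max$ interchanged: the ambient monoid is now $([0,1],S_M)$ with $S_M(x,y)=\max(x,y)$, whose identity element is $0$, so condition $(2)$ in the definition of an $F$-fuzzy submonoid reads $\sigma(0)=1$.

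For necessity, suppose $\sigma$ is an $F_M$-fuzzy $t$-subconorm of $([0,1],S_M)$. Then $\sigma(0)=1$ is immediate from condition $(2)$, and the bound $\sigma(x)\ge k$ for all $x\in[0,1]$ is exactly the corollary recorded above for $F$-fuzzy $t$-subconorms; alternatively one recovers it by putting $y=0$ and combining the absorbing property with monotonicity, namely $k=F_M(\sigma(x),k)\le F_M(\sigma(x),\sigma(0))\le\sigma(S_M(x,0))=\sigma(x)$.

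For sufficiency, assume $\sigma(0)=1$ and $\sigma(x)\ge k$ for every $x$. The first hypothesis supplies condition $(2)$, so it remains to verify $F_M(\sigma(x),\sigma(y))\le\sigma(\max(x,y))$. The key reduction is that, because $\sigma(x),\sigma(y)\in[k,1]$, the pair $(\sigma(x),\sigma(y))$ can only land in the branch $(k,1]^2$ or in the ``otherwise'' branch, and in both cases $F_M$ collapses to the minimum:
\begin{align*}
F_M(\sigma(x),\sigma(y))=\min(\sigma(x),\sigma(y)).
\end{align*}
Indeed, on $(k,1]^2$ one has $k+(1-k)\min\!\big(\tfrac{\sigma(x)-k}{1-k},\tfrac{\sigma(y)-k}{1-k}\big)=\min(\sigma(x),\sigma(y))$, while if one coordinate equals $k$ then $F_M=k=\min(\sigma(x),\sigma(y))$ (using $S(1,1)=1$ at the point $(k,k)$). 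Once this is in place the inequality is immediate, since $\max(x,y)$ is one of $x,y$, whence $\sigma(\max(x,y))\in\{\sigma(x),\sigma(y)\}$ and thus dominates $\min(\sigma(x),\sigma(y))$.

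I expect no genuine obstacle here; the construction is strictly dual to Proposition~\ref{F}. The only point requiring care is the boundary bookkeeping in the reduction of $F_M$ to $\min$, where the ``otherwise'' branch and the diagonal point $(k,k)$ must be checked by hand rather than read off the second branch. Everything else is monotonicity and a one-line case comparison.
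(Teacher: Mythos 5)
Your proposal is correct and takes essentially the same approach as the paper, which proves this statement by dualizing the branch-by-branch case analysis of Proposition~\ref{F}: necessity from the identity element $0$ of $S_M$ and the absorbing-element/monotonicity bound, sufficiency from the branch structure of $F_M$ together with the observation that $\sigma(\max(x,y))\in\{\sigma(x),\sigma(y)\}$. The only cosmetic difference is that you use $\sigma\geq k$ to collapse the $[0,k]^2$ and ``otherwise'' branches into the single identity $F_M(\sigma(x),\sigma(y))=\min(\sigma(x),\sigma(y))$ on $[k,1]^2$ (correctly checking the point $(k,k)$ via $S(1,1)=1$), whereas the paper instead bounds those branches directly by $k\leq\sigma(\cdot)$.
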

\begin{proof}
	It can be proven in a similar way as Proposition~\ref{F}.
\end{proof}

\subsection{Lattice value fuzzy submonoid}\label{Lattice}
Let $(L,\lor,\land,\leq)$ be a bounded lattice with maximum element $\mathbf{1}$ and minimum element $\mathbf{0}$. A map $A:M \rightarrow L $ will be called $L$-fuzzy set of $M$ and the family of all $L$-fuzzy set of $M$ is denoted $\mathscr{F}_L(M)$.

We can generalize the definition of fuzzy $t$-subnorm and $t$-subconorm onto lattice valued fuzzy sets.
\begin{definition}
	Let $(L,\lor,\land,\leq)$ be a bounded lattice with maximum element $\mathbf{1}$ and minimum element $\mathbf{0}$, $(M,\circ)$ a monoid with identity element $e$ and  $\sigma$ a $L$-fuzzy set of $M$. Then $\sigma$ is a $\land$-fuzzy submonoid of $M$ if and only if $\sigma$ satisfies the following conditions:
	\begin{enumerate}[(1)]
		\item $(\sigma(x)\land\sigma(y)) \leq \sigma(x\circ y)$, for all $x,y\in M$;
		\item $\sigma(e)=\mathbf{1}$.
	\end{enumerate}
\end{definition}

\begin{proposition}
	Let $\sigma$ be a $\land$-fuzzy submonoid. The core of $\sigma$ (i.e., the set of elements $ x$ of $M$ such that $\sigma(x)=\mathbf{1}$) is a submonoid of $M$.
\end{proposition}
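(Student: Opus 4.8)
The plan is to mirror the three earlier core arguments (for $A$-, $U$-, and $F$-fuzzy submonoids), adapting them to the lattice-valued setting. Write $H = \{x \in M : \sigma(x) = \mathbf{1}\}$ for the core. To show $H$ is a submonoid of $(M,\circ)$ I need three things: that $H$ contains the identity $e$, that $H$ is closed under $\circ$, and that associativity holds on $H$. The last is immediate, since $\circ$ is associative on all of $M$ and $H \subseteq M$, so the restriction of $\circ$ inherits associativity.

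First I would verify $e \in H$. This is nothing more than the second defining condition of a $\land$-fuzzy submonoid, namely $\sigma(e) = \mathbf{1}$, so $e$ lies in the core and in particular serves as the identity element of $H$.

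Next I would establish closure. Take any $x, y \in H$, so that $\sigma(x) = \sigma(y) = \mathbf{1}$. Applying the first defining condition of a $\land$-fuzzy submonoid gives
\begin{align*}
\mathbf{1} = \mathbf{1} \land \mathbf{1} = \sigma(x) \land \sigma(y) \le \sigma(x \circ y).
\end{align*}
Here the crucial structural fact is that $\mathbf{1}$ is the \emph{maximum} element of the bounded lattice $L$, so $\sigma(x \circ y) \le \mathbf{1}$ holds automatically; combined with the displayed inequality this forces $\sigma(x \circ y) = \mathbf{1}$, i.e. $x \circ y \in H$. This is the only step where the bounded-lattice hypothesis is genuinely used, and it is the exact analogue of the step ``$1 = A(\sigma(x),\ldots) \le \sigma(x\circ y)$'' in the earlier proofs.

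I do not anticipate a real obstacle: the argument is essentially a verbatim translation of the $U$-fuzzy and $F$-fuzzy core proofs, with the $t$-norm or nullnorm replaced by the meet $\land$ and the scalar top $1$ replaced by the lattice top $\mathbf{1}$. The single point worth stating explicitly, rather than treating as ``obvious,'' is the antisymmetry step that upgrades $\mathbf{1} \le \sigma(x\circ y)$ to an equality, since in the lattice setting one must invoke that $\mathbf{1}$ is the greatest element rather than rely on the total order of $[0,1]$.
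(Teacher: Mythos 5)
Your proof is correct and follows essentially the same route as the paper's: identity from $\sigma(e)=\mathbf{1}$, closure from $\mathbf{1} = \sigma(x)\land\sigma(y) \le \sigma(x\circ y)$, and associativity inherited from $M$. The only difference is that you spell out the antisymmetry step (using that $\mathbf{1}$ is the top of $L$ to upgrade the inequality to equality), which the paper leaves implicit.
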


\begin{proof}
	The identity element of $H$ obviously existed and the associativity is inherited.
	Then for all $x,y\in H$,
	$$ \mathbf{1} \leq (\sigma(x)\land\sigma(y)) \leq \sigma(x\circ y), $$
	so $x\circ y \in H$.
	
\end{proof}

\begin{definition}
	Let $(L,\lor,\land,\leq)$ and $T$ be a bounded lattice and $t$-norm, respectively. A $\land$-fuzzy submonoid of $([0,1],T)$ is also called $\land$-fuzzy $t$-subnorm of $([0,1],T)$.
\end{definition}

\begin{definition}
	Let $(L,\lor,\land,\leq)$ and $S$ be a bounded lattice and $t$-conorm, respectively. A $\land$-fuzzy submonoid of $([0,1],S)$ is also called $\land$-fuzzy $t$-subconorm of $([0,1],S)$.
\end{definition}

By using the same generalization method, the operation $\lor$ also has above definitions and propositions.

\section{Vague monoids}\label{Vague monoids}
To explore the homomorphisms between  $A$-vague monoid and $A$-fuzzy monoid,  the concepts of $A$-vague binary operations and $A$-vague monoids are put forward below.

\subsection{Vague monoids by aggregation operators}
The fuzzification of equivalence relations are fuzzy relations called $T$-fuzzy equivalences or $T$ -indistinguishability
operators. They are essential in fuzzy logic and have been widely studied \cite{J.RecasensIndistinguishability2011Operators}. In the following, we extend the definition of $T$-fuzzy equivalences to binary aggregation operators.
\begin{definition}
	Let $A$ be a binary aggregation operator and $X$ a set. A fuzzy relation $E : X\times X \rightarrow [0, 1] $ is an $A$-indistinguishability operator, which satisfies the following conditions for all $x, y, z \in X $:
	\begin{enumerate}[(1)]
		\item  $E(x, x) = 1$;
		\item  $E(x, y) = E(y, x);$
		\item  $A(E(x, y), E(y, z)) \le E(x, z)$.
	\end{enumerate}
	If $E(x, y) = 1$ implies $x = y,$ then it is said that $E$ separates points.
\end{definition}

\begin{definition}
	A fuzzy binary operation on a set $M$ is a mapping $\tilde{\circ}  : M \times M \times M \rightarrow [0, 1]$, where $\tilde{\circ}(x, y, z)$ is interpreted as the degree in which $z$ is $x\circ y$.
\end{definition}

Similar to the method of \cite{D.BoixaderVagueandfuzzy2021}, the vague binary operation based on aggregation functions is given below.
\begin{definition}\label{fuzzy_vague}
	Let $E$ be an $A$-indistinguishability operator on $M$. An $A$-vague binary operation is a fuzzy binary operation on $M$ when it satisfies the following conditions for all $x, y, z, x' , y' , z' \in M$:
		\begin{enumerate}[(1)]
		\item  $A(\tilde{\circ}(x, y, z),E(x, x'),E(y, y'),E(z, z')) \leq \tilde{\circ}(x' , y' , z ');$
		\item  $A(\tilde{\circ}(x, y, z), \tilde{\circ}(x, y, z')) \leq E(z, z');$
		\item   For all $ x, y \in M$, there exists $z\in M$ such that $\tilde{\circ} (x, y, z) = 1$.
	\end{enumerate}	
\end{definition}

\begin{proposition}
	Let $E$ be an $A$-indistinguishability operator on $M$ separating points and $\tilde{\circ}$ a vague binary operation on
	$M$. Then $z$ in Definition~\ref{fuzzy_vague} \textnormal{(3)} is unique.
\end{proposition}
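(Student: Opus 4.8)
The plan is to argue uniqueness by contradiction-free substitution: fix the pair $x,y\in M$ and assume that two elements $z,z'\in M$ both realize the value $1$, i.e. $\tilde{\circ}(x,y,z)=1$ and $\tilde{\circ}(x,y,z')=1$ (the existence of at least one such element being supplied by condition (3) of Definition~\ref{fuzzy_vague}). I would then derive $z=z'$ directly from axiom (2) of Definition~\ref{fuzzy_vague} together with the separating-points hypothesis on $E$, without needing the reflexivity or symmetry of $E$ or axiom (1) of the vague operation.

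First I would invoke condition (2), namely
$$A(\tilde{\circ}(x,y,z),\tilde{\circ}(x,y,z'))\le E(z,z'),$$
and substitute the two assumed equalities to obtain $A(1,1)\le E(z,z')$. The one computation that matters is that $A(1,1)=1$; this is exactly the upper boundary normalization from Definition~\ref{aggregation function}(1), read off in the binary case. Hence $1\le E(z,z')$, and since $E$ takes values in $[0,1]$, the inequality collapses to the equality $E(z,z')=1$.

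Finally I would apply the separating-points property of $E$: by that hypothesis, $E(z,z')=1$ forces $z=z'$, which is precisely the desired uniqueness.

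Since the whole argument is a short chain of one substitution, one normalization, and one application of separation, there is no genuine obstacle here. The only point requiring a moment's care is confirming that the normalization $A(1,1)=1$ is indeed available as a binary instance of the aggregation axiom, so that $A(1,1)\le E(z,z')$ can be upgraded to $E(z,z')=1$ and the separating property can be brought to bear. If one instead worked with a general $A$ for which $A(1,1)$ were not guaranteed to equal $1$, the argument would break down at exactly this step, so this is the load-bearing ingredient to flag.
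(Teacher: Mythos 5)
Your proof is correct and follows essentially the same route as the paper's: apply axiom (2) of Definition~\ref{fuzzy_vague} to the two assumed witnesses, use $A(1,1)=1$ to conclude $E(z,z')=1$, then invoke the separating-points hypothesis. The only difference is that you make explicit the normalization $A(1,1)=1$ coming from the boundary condition on aggregation operators, which the paper uses silently.
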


\begin{proof}
	Let $z, z'\in M$ satisfy $\tilde{\circ}(x, y, z) = 1$ and  $\tilde{\circ}(x, y, z') = 1$. From Definition~\ref{fuzzy_vague}, one can conclude that
	$$ 1 = A( \tilde{\circ}(x, y, z),  \tilde{\circ}(x, y, z' )) \leq E(z, z').$$ Hence, $E(z, z') = 1$. Since $E$ separates points, we have $z = z'$.
\end{proof}
\begin{definition}
	Let $\tilde{\circ}$ be an $A$-vague binary operation on $M$ with respect to an $A$-indistinguishability operator $E$ on $M$.
	Then $(M, \tilde{\circ})$ is an $A$-vague monoid if and only if it satisfies the following properties:
	\begin{enumerate}[(1)]
		\item  Associativity:  $$A(\tilde{\circ}(y,z,d),\tilde{\circ}(x,d,m),\tilde{\circ}(x,y,q),\tilde{\circ}(q,z,w)) \leq E(m,w),$$
		where $ x, y, z, d, m, q \in M$.
		\item Identity: There exists identity element $e \in M$ such that
		\begin{align*}
		\tilde{\circ}(e,x,x)=1\ \textnormal{and}\  \tilde{\circ}(x,e,x)= 1\ \textnormal{for all}\ x \in M.
		\end{align*}
	\end{enumerate}
\end{definition}
\begin{definition}
	An $A$-vague monoid is commutative if and only if
	for all $x,y,m,w \in M$, $$A(\tilde{\circ}(x,y,m), \tilde{\circ}(y,x,w)) \leq E(m,w).$$
	
\end{definition}
Next, we give a sufficient condition for the uniqueness of identity element of $A$-vague monoid $(m,\tilde{\circ})$.
\begin{proposition}
	Let $A$ be an aggregation operator, $E$ an $A$-indistinguishability operator separating points on set $M$ and $(M,\tilde{\circ})$ an $A$-vague monoid. Then the identity element is unique.
\end{proposition}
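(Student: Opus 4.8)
The plan is to mimic the classical argument that identities are unique, namely $e = e\circ e' = e'$, but to carry it out at the level of the vague operation $\tilde{\circ}$ and then transfer it to an honest equality using the separation property of $E$. First I would suppose that $e$ and $e'$ are both identity elements of the $A$-vague monoid $(M,\tilde{\circ})$, so that $\tilde{\circ}(e,x,x)=\tilde{\circ}(x,e,x)=1$ and $\tilde{\circ}(e',x,x)=\tilde{\circ}(x,e',x)=1$ hold for every $x\in M$.

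The key step is to evaluate the product $e\circ e'$ in two different ways, producing two candidate results each of degree $1$. Using that $e$ acts as a left identity, with the substitution $x=e'$ in $\tilde{\circ}(e,x,x)=1$, gives $\tilde{\circ}(e,e',e')=1$. Using that $e'$ acts as a right identity, with the substitution $x=e$ in $\tilde{\circ}(x,e',x)=1$, gives $\tilde{\circ}(e,e',e)=1$. Thus the single pair $(e,e')$ admits both $e'$ and $e$ as products of full degree.

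Next I would feed these two facts into condition~(2) of Definition~\ref{fuzzy_vague}, the functionality axiom of a vague binary operation, applied with $x=e$, $y=e'$, $z=e'$, $z'=e$:
$$A(\tilde{\circ}(e,e',e'),\,\tilde{\circ}(e,e',e))\le E(e',e).$$
Since both arguments on the left equal $1$ and $A(1,1)=1$ by Definition~\ref{aggregation function}~(1), this forces $E(e',e)=1$. Because $E$ separates points, we conclude $e'=e$, which establishes uniqueness of the identity.

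I do not anticipate a serious obstacle here; the proof is short once the right substitutions are chosen. The only point requiring care is to arrange that the \emph{same} pair $(e,e')$ yields two distinct candidate products of degree $1$, since it is precisely this coincidence that activates the functionality axiom. After that, the boundary condition $A(1,1)=1$ and the separation of $E$ do all the remaining work, and notably no associativity or commutativity of $\tilde{\circ}$ is needed.
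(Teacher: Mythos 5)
Your proposal is correct and follows exactly the paper's own argument: the same two substitutions producing $\tilde{\circ}(e,e',e')=1$ and $\tilde{\circ}(e,e',e)=1$, the same application of condition~(2) of the vague-operation definition together with $A(1,1)=1$, and the same final appeal to $E$ separating points. The only difference is cosmetic—you spell out the boundary condition $A(1,1)=1$, which the paper uses implicitly.
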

\begin{proof}
	Let $e$ and $e'$
 be two identity elements of $M$. Then, since $e$ is an identity element,
	we have that $$\tilde{\circ}(e,e',e') = 1.$$
	Since $e'$
 is an identity element, we have that
	$$\tilde{\circ}(e,e',e) = 1.$$
	From Definition~\ref{fuzzy_vague},
	$$1 = A(\tilde{\circ}(e,e',e'), \tilde{\circ}(e,e',e)) \leq E(e,e').$$
	So $E(e, e') = 1$ and $e = e'$
 because $E$ separates points.
\end{proof}

\begin{definition}
	Let $\circ$ be a binary operation on $M$ and $E$ an $A$-indistinguishability operator on $M$. Then $E$ is regular with respect to $\circ$ if and only if for all $x, y, z \in M$,
	$$E(x,y) \leq E(x\circ z,y \circ z)$$
	and
	$$E(x,y) \leq E(z\circ x,z \circ y).$$
\end{definition}

Under the condition of regular $A$-indistinguishability operators, the definition of the fuzzy mapping $\tilde{\circ}$ can be given and further the $A$-vague monoid is obtained.

\begin{proposition}
	Let $E$ be a regular $A$-indistinguishability operator on $M$ with respect to a binary operation $\circ$ on $M$.
	\begin{enumerate}
		\item[(1)] The fuzzy mapping $\tilde{\circ} : M \times M \times M \rightarrow [0, 1]$ defined by
		$$\tilde{\circ}(x,y,z) = E(x \circ y,z), \forall x,y,z \in M,$$
		is an $A$-vague binary operation on $M$.
		\item[(2)] If $(M, \circ)$ is a monoid, then $(M,\tilde{\circ})$ is an $A$-vague monoid.
	\end{enumerate}
\end{proposition}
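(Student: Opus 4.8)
The plan is to verify each defining condition by direct substitution of $\tilde{\circ}(x,y,z) = E(x\circ y, z)$ and then propagate inequalities using only reflexivity, symmetry and transitivity of $E$, the regularity of $E$ with respect to $\circ$, the monotonicity of $A$, and (for part (2)) the associativity of the monoid. I would start with the two cheap conditions of part (1). Condition (3) is immediate: for any $x,y$ take $z = x\circ y$, so that $\tilde{\circ}(x,y,x\circ y) = E(x\circ y, x\circ y) = 1$ by reflexivity. For condition (2), substituting gives $A(\tilde{\circ}(x,y,z), \tilde{\circ}(x,y,z')) = A(E(z, x\circ y), E(x\circ y, z'))$ after applying symmetry to the first factor, and this is bounded above by $E(z,z')$ by the transitivity axiom of $E$.

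The heart of part (1) is condition (1). After substitution the target reads $A\big(E(x\circ y, z), E(x,x'), E(y,y'), E(z,z')\big) \le E(x'\circ y', z')$. I would first use regularity twice to transport the two indistinguishability factors onto the products: right regularity gives $E(x,x') \le E(x\circ y, x'\circ y)$ and left regularity gives $E(y,y') \le E(x'\circ y, x'\circ y')$, and transitivity then yields $A(E(x,x'), E(y,y')) \le E(x\circ y, x'\circ y')$. This reduces the problem to the single chain $x'\circ y' \rightsquigarrow x\circ y \rightsquigarrow z \rightsquigarrow z'$: applying the transitivity axiom along the links $E(x'\circ y', x\circ y)$, $E(x\circ y, z)$ and $E(z,z')$ bounds the whole expression by $E(x'\circ y', z') = \tilde{\circ}(x',y',z')$.

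For part (2), the identity condition follows from the monoid identity $e$, since $\tilde{\circ}(e,x,x) = E(e\circ x, x) = E(x,x) = 1$ and likewise $\tilde{\circ}(x,e,x) = E(x\circ e, x) = 1$. Associativity is the analogue of the condition (1) argument. Substituting turns the goal into $A\big(E(y\circ z, d), E(x\circ d, m), E(x\circ y, q), E(q\circ z, w)\big) \le E(m,w)$. Here I would apply left regularity to $E(y\circ z, d)$ to obtain $E(x\circ(y\circ z), x\circ d)$, right regularity to $E(x\circ y, q)$ to obtain $E((x\circ y)\circ z, q\circ z)$, and then invoke the associativity of $(M,\circ)$ to identify $x\circ(y\circ z) = (x\circ y)\circ z$. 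This produces the single chain $m \rightsquigarrow x\circ d \rightsquigarrow x\circ(y\circ z) = (x\circ y)\circ z \rightsquigarrow q\circ z \rightsquigarrow w$, whose successive transitivity applications deliver $E(m,w)$.

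The main obstacle is bookkeeping at the level of the aggregation operator rather than any deep idea: each chain must be collapsed by repeated use of the binary transitivity axiom together with the monotonicity of $A$, yet the statements to be proved are phrased with a single four-argument $A$. Making the argument rigorous therefore requires the arity-consistency of $A$, namely that the four-place aggregation agrees with the iterated two-place one up to reordering, i.e. the associativity and symmetry enjoyed by the $t$-norms, $t$-conorms, uninorms and nullnorms under consideration, so that $A(a_1,a_2,a_3,a_4)$ may be regrouped as $A(A(a_2,a_1), A(a_3,a_4))$ and each inner $A$ estimated separately. Once this regrouping is justified, monotonicity lets me replace each factor by the looser bound coming from regularity, and the two or three transitivity steps close the chain.
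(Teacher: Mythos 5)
Your proof is correct and follows essentially the same route as the paper's: substitute $\tilde{\circ}(x,y,z)=E(x\circ y,z)$, transport $E(x,x')$ and $E(y,y')$ onto products via the two regularity inequalities, and collapse the resulting chains with transitivity of $E$ and monotonicity of $A$, exactly as the paper does for the vague-operation axioms, the identity, and associativity. Your closing remark about needing arity-consistency of $A$ (regrouping the four-argument aggregation into iterated binary applications) is well taken --- the paper's own proof silently assumes this when it passes from $A$ with four arguments to three and then two, which is legitimate for $t$-norms, uninorms and nullnorms but not for an arbitrary aggregation operator as defined in the paper.
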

\begin{proof}
	\begin{enumerate}[(1)]
		\item
		Firstly, we prove that $\tilde{\circ}$ satisfies the properties of Definition~\ref{fuzzy_vague}. It follows that for all $x, y, z, x', y', z'\in M$.	
		
			At the beginning, let's verify that  $A(\tilde{\circ}(x, y, z),E(x, x'),E(y, y'),E(z, z')) \leq \tilde{\circ}(x' , y' , z ')$.
			\begin{align*}
			&A(\tilde{\circ}(x,y,z),E(x,x'),E(y,y'),E(z,z'))\\
			&= A(E(x\circ y,z),E(x,x'),E(y,y'),E(z,z'))\\
			&\leq A(E(x\circ y,z'),E(x,x'),E(y,y))\\
			&\leq A(E(x \circ y,z'),E(x \circ y,x'\circ y),E(y,y'))\\
			&\leq A(E(x'\circ y,z'),E(y,y'))\\
			&\leq A(E(x'\circ y,z'),E(x'\circ y,x'\circ y'))\\
			&\leq  E(x'\circ y',z') \\
			&= \tilde{\circ}(x',y',z').		
			\end{align*}
	
	Then, $A(\tilde{\circ}(x,y,z), \tilde{\circ}(x,y,z'))
		=A(E(x \circ y,z),E(x\circ y,z'))
		\leq E(z,z')$
		
		Furthermore, for all $ x, y\in M$ we can consider $z = x \circ y$. Then
		$$\tilde{\circ}(x,y,z) = E(x\circ y,x\circ y) = 1.$$
		
		\item
		In the following,  let's verify that $(M,\tilde{\circ})$ has associativity and identity element.
		
		By the associativity of $\circ$ and the regularity of $E$, for all $x, y, z, d, m, q, w \in M$, we have that
		\begin{align*}
		&A (\tilde{\circ}(y,z,d),\tilde{\circ} (x,d,m), \tilde{\circ}(x,y,q), \tilde{\circ}(q,z,w))\\
		&= A (E(y\circ z,d),E(x\circ d,m),E(x\circ y,q),E(q \circ z,w))\\
		&\leq A (E(x \circ (y \circ z),x \circ d),E(x \circ d,m),E((x \circ y) \circ z,q \circ z),E(q \circ z,w))\\
		&\leq A (E(x \circ (y \circ z),m),E((x \circ y) \circ z,w))\\
		&\leq E(m,w).
		\end{align*}
		Suppose that $e$ be the identity element of $M$, it holds that
		$$\tilde{\circ}(x,e,x) = E(x \circ e,x) = E(x,x) = 1,$$
		$$\tilde{\circ}(e,x,x) = E(e \circ x,x) = E(x,x) = 1,$$
		then $e$ is the identity element of $\tilde{\circ}$.
		Hence, $(M,\tilde{\circ})$ is an $A$-vague monoid.
		\end{enumerate}
	\end{proof}
    Conversely, a monoid $(M,\circ)$ can be obtained from a vague monoid $(M, \tilde{\circ})$.
\begin{proposition}
Let $(M, \tilde{\circ})$ be a vague monoid with respect to an $A$-indistinguishability operator $E$ separating points. Then $(M, \circ)$ is a monoid where $x\circ y$ is the unique $z\in M$ such that $ \tilde{\circ}(x, y, z) = 1$.
\end{proposition}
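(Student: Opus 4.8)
The plan is to show that the prescription $x \circ y := z$, where $z$ is the unique element with $\tilde{\circ}(x,y,z)=1$, yields a well-defined binary operation, and then to transfer the monoid axioms from $\tilde{\circ}$ to $\circ$ by exploiting that $A(1,\dots,1)=1$ together with the point-separation of $E$. First I would address well-definedness. For every pair $x,y \in M$, condition (3) of Definition~\ref{fuzzy_vague} guarantees the existence of some $z$ with $\tilde{\circ}(x,y,z)=1$, and the uniqueness of this $z$ is exactly the content of the preceding proposition (which combines condition (2) of Definition~\ref{fuzzy_vague} with the assumption that $E$ separates points). Hence $\circ : M\times M \to M$ is a genuine, single-valued map, and the statement's phrase ``the unique $z$'' is justified.

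Next I would verify that the identity element $e$ of the vague monoid serves as the identity for $\circ$. By the Identity axiom, $\tilde{\circ}(e,x,x)=1$ and $\tilde{\circ}(x,e,x)=1$ for all $x\in M$; since $x$ is by construction the unique $z$ with $\tilde{\circ}(e,x,z)=1$ (respectively the unique $z$ with $\tilde{\circ}(x,e,z)=1$), these equalities read exactly as $e\circ x = x$ and $x\circ e = x$. So $e$ is a two-sided identity for $\circ$.

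The main step is associativity, and it is where the vague Associativity axiom does the work. Given $x,y,z\in M$, I would set $d := y\circ z$, $q := x\circ y$, $m := x\circ d = x\circ(y\circ z)$ and $w := q\circ z = (x\circ y)\circ z$. By the definition of $\circ$, all four values $\tilde{\circ}(y,z,d)$, $\tilde{\circ}(x,d,m)$, $\tilde{\circ}(x,y,q)$ and $\tilde{\circ}(q,z,w)$ equal $1$. Substituting into the Associativity inequality and using $A(1,1,1,1)=1$ yields $1 \leq E(m,w)$, hence $E(m,w)=1$; since $E$ separates points, $m=w$, i.e. $x\circ(y\circ z)=(x\circ y)\circ z$. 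The only genuine obstacle is bookkeeping: one must instantiate the free variables $d,m,q,w$ of the Associativity axiom with precisely these crisp products, so that every $\tilde{\circ}$-argument is forced to $1$. Once that alignment is chosen the conclusion is immediate, and no estimate beyond $A(1,\dots,1)=1$ and point-separation is required, completing the verification that $(M,\circ)$ is a monoid.
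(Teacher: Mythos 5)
Your proposal is correct and follows essentially the same route as the paper: associativity is obtained by instantiating the vague associativity axiom at $d=y\circ z$, $q=x\circ y$, $m=x\circ(y\circ z)$, $w=(x\circ y)\circ z$, so that all four $\tilde{\circ}$-values equal $1$, and then using $A(1,\dots,1)=1$ together with point-separation of $E$. The only cosmetic difference is the identity step, where you invoke the already-established uniqueness of $z$ directly, while the paper re-derives it inline from condition (2) of Definition~\ref{fuzzy_vague} and point-separation --- the same argument in substance.
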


\begin{proof}
	First to verify the associativity,
		\begin{align*}
		1& =A (\tilde{\circ}(y,z,y \circ z), \tilde{\circ}(x,y \circ z,x \circ (y \circ z)), \tilde{\circ}(x,y,x \circ y), \tilde{\circ}(x \circ y,z,(x \circ y) \circ z))\\
	   &\leq E(x \circ (y \circ z),(x \circ y) \circ z)
	\end{align*}
	Therefore, $ x \circ (y \circ z) = (x \circ y) \circ z$.

	Further, we can obtain that
	$$	1 = A(\tilde{\circ}(x,e,x),\tilde{\circ}(x,e,x\circ e))\le E(x \circ e,x),$$
	$$		1 = A(\tilde{\circ}(e,x,x),\tilde{\circ}(e,x,e\circ x))\le E(e\circ x,x),$$

	Hence, $E(x \circ e,x)=1$ and $E(e \circ x,x)=1$, that is $ x \circ e = x$ and $e \circ x = x$, then $e$ is the identity element of $(M, \circ)$.
\end{proof}

\begin{definition}
	Let $(M, \tilde{\circ})$ be a vague monoid with respect to an $A$-indistinguishability operator $E$ separating points. Then $(M,\circ)$ is the monoid associated to the vague monoid $(M,\tilde{\circ})$.
\end{definition}

 Further, if $A$ is an aggregation operator and $(M,\circ)$ is a monoid, then there exist bijective maps between their $A$-vague monoids and regular $A$-indistinguishability operators and they can represent each other:

\begin{itemize}
	\item $\tilde{\circ}(x,y,z)=E(x\circ y,z);$
	\item $E(x,y)=\tilde{\circ}(x,e,y).$
\end{itemize}

Regarding commutativity, the connection between $(M,\tilde{\circ})$ and $(M, \circ)$ is given.

\begin{proposition}
	Let $E$ be an $A$-indistinguishability operator separating points, $(M, \tilde{\circ})$ an $A$-vague monoid and $(M, \circ)$ its associated monoids $(x \circ y = z$ if and only if $\tilde{\circ}(x, y, z) = 1)$. Then $(M, \tilde{\circ})$ is commutative if and only if
	$(M, \circ)$ is commutative.
\end{proposition}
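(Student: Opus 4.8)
The plan is to exploit the mutual representation $\tilde{\circ}(x,y,z)=E(x\circ y,z)$ displayed just above the statement, together with the associated-monoid relation ($x\circ y=z$ exactly when $\tilde{\circ}(x,y,z)=1$), and to prove the two implications separately. Both directions are short once this dictionary between $\tilde{\circ}$ and $E$ is in hand; the content is entirely in translating the commutativity condition for $\tilde{\circ}$ into a statement about $E$ and $\circ$.

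For the forward direction, I would assume $(M,\tilde{\circ})$ is commutative and fix arbitrary $x,y\in M$. Setting $m=x\circ y$ and $w=y\circ x$, the associated-monoid relation gives $\tilde{\circ}(x,y,m)=1$ and $\tilde{\circ}(y,x,w)=1$. Substituting into the commutativity inequality and using that an aggregation operator preserves the top value, i.e. $A(1,\dots,1)=1$ from Definition~\ref{aggregation function}, yields
\begin{align*}
1=A(\tilde{\circ}(x,y,m),\tilde{\circ}(y,x,w))\leq E(m,w)=E(x\circ y,\,y\circ x).
\end{align*}
Since $E$ separates points, $E(x\circ y,y\circ x)=1$ forces $x\circ y=y\circ x$, so $(M,\circ)$ is commutative.

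For the backward direction, I would assume $(M,\circ)$ is commutative and verify the commutativity inequality for $\tilde{\circ}$ directly. Rewriting both vague-product values via the representation gives
\begin{align*}
A(\tilde{\circ}(x,y,m),\tilde{\circ}(y,x,w))=A(E(x\circ y,m),E(y\circ x,w)).
\end{align*}
Commutativity of $\circ$ replaces $y\circ x$ by $x\circ y$, after which symmetry and transitivity of the $A$-indistinguishability operator $E$ close the argument:
\begin{align*}
A(E(x\circ y,m),E(x\circ y,w))=A(E(m,x\circ y),E(x\circ y,w))\leq E(m,w).
\end{align*}
This is precisely the commutativity condition for $(M,\tilde{\circ})$.

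The computations are routine; the only steps requiring care are making sure the representation $\tilde{\circ}(x,y,z)=E(x\circ y,z)$ is legitimately available, which rests on the bijection between $A$-vague monoids and regular $A$-indistinguishability operators recorded above, and applying symmetry of $E$ before transitivity in the backward direction, so that the third axiom of the $A$-indistinguishability operator can be invoked with its two arguments in the correct orientation.
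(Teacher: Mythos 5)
Your proposal is correct and follows essentially the same route as the paper: the forward direction uses $\tilde{\circ}(x,y,x\circ y)=\tilde{\circ}(y,x,y\circ x)=1$, the boundary condition $A(1,\ldots,1)=1$, and the separating-points property, while the backward direction rewrites both arguments via $\tilde{\circ}(x,y,z)=E(x\circ y,z)$, uses commutativity of $\circ$, and closes with symmetry and $A$-transitivity of $E$. Your explicit flagging of the symmetry step and of the reliance on the representation $\tilde{\circ}(x,y,z)=E(x\circ y,z)$ (which the paper asserts but does not prove for general aggregation operators) is, if anything, more careful than the paper's own write-up.
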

\begin{proof}
	Firstly, if $(M, \tilde{\circ})$ is commutative, we have that
	$$  1= A(\tilde{\circ}(x,y,x \circ y),\tilde{\circ}(y,x,y\circ x)) \leq E(x \circ y,y \circ x).$$
	From $E$ separating points and $E(x \circ y, y \circ x) = 1$, we have $x\circ y=y\circ x$.
	Inversely,  if	$(M, \circ)$ is commutative, we have
	\begin{align*}
	A (\tilde{\circ}(x,y,z), \tilde{\circ}(y,x,z'))
	&= A (E(x \circ y,z),E(y \circ x,z'))\\
	&= A(E(x \circ y,z),E(x \circ y,z'))\\
	&\leq E(z,z').
	\end{align*}
So $(M, \tilde{\circ})$ is commutative.
\end{proof}

 The relation between $T$-vague monoid and $T$-fuzzy monoid have been studied in \cite{D.BoixaderVagueandfuzzy2021}. In the following, the homomorphisms between them is extended to $A$-vague monoid and $A$-fuzzy monoid.

\begin{definition}
	Let $(M, \tilde{\circ})$ and $(N,\tilde{\bullet})$ be two $A$-vague monoids with respect to the $A$-indistinguishability operators $E$ and $F$, respectively. A map $f:M \rightarrow N$ is a homomorphism from $M$ onto $N$ if and only if
	$\tilde{\circ}(x,y,z) \leq \tilde{\bullet}(f(x),f(y),f(z))\ \textnormal{for all}  \ x,y,z \in M.$
\end{definition}
\begin{lemma}
	Let $(M, \tilde{\circ})$ and $(N,\tilde{\bullet})$ be two $A$-vague monoids with respect to $A$-indistinguishability operators $E$ and $F$ respectively and  $f:M \rightarrow N$ a homomorphism from $M$ onto $N$. If $e$ is the identity element of $M$, then $f(e)$ is the identity element of $N$.
\end{lemma}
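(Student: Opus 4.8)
The plan is to show directly that $f(e)$ satisfies the two identity conditions defining the identity element of the $A$-vague monoid $(N, \tilde{\bullet})$, namely that $\tilde{\bullet}(f(e), w, w) = 1$ and $\tilde{\bullet}(w, f(e), w) = 1$ for every $w \in N$. The essential tool is the surjectivity built into the word \emph{onto}: every $w \in N$ can be written as $w = f(x)$ for some $x \in M$, which lets me transport the identity conditions of $M$ across $f$.

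First I would fix an arbitrary $w \in N$ and, using that $f$ maps $M$ onto $N$, choose $x \in M$ with $f(x) = w$. Since $e$ is the identity element of the $A$-vague monoid $(M, \tilde{\circ})$, by definition we have
\begin{align*}
\tilde{\circ}(e, x, x) = 1 \quad \textnormal{and} \quad \tilde{\circ}(x, e, x) = 1.
\end{align*}
Now I apply the homomorphism inequality $\tilde{\circ}(a,b,c) \le \tilde{\bullet}(f(a), f(b), f(c))$ to these two triples, giving
\begin{align*}
1 = \tilde{\circ}(e, x, x) \le \tilde{\bullet}(f(e), f(x), f(x)) \quad \textnormal{and} \quad 1 = \tilde{\circ}(x, e, x) \le \tilde{\bullet}(f(x), f(e), f(x)).
\end{align*}

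The one genuinely load-bearing observation is that $\tilde{\bullet}$ takes values in $[0,1]$, so the inequalities $1 \le \tilde{\bullet}(\dots)$ force equality; hence $\tilde{\bullet}(f(e), f(x), f(x)) = 1$ and $\tilde{\bullet}(f(x), f(e), f(x)) = 1$. Substituting back $w = f(x)$ yields $\tilde{\bullet}(f(e), w, w) = 1$ and $\tilde{\bullet}(w, f(e), w) = 1$. Since $w$ was arbitrary and every element of $N$ arises this way, $f(e)$ meets both identity conditions, so it is the identity element of $(N, \tilde{\bullet})$.

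There is no real technical obstacle here; the proof is a two-line transport argument. The only point requiring care is that surjectivity of $f$ is truly needed: without it one would only obtain the identity relations against elements in the image $f(M)$, which would not suffice to conclude $f(e)$ is a global identity of $N$. I would therefore make the use of \emph{onto} explicit at the step where $w$ is written as $f(x)$.
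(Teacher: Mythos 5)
Your proof is correct and follows essentially the same route as the paper's: apply the homomorphism inequality to the identity conditions $\tilde{\circ}(e,x,x)=1$ and $\tilde{\circ}(x,e,x)=1$ and use that values lie in $[0,1]$ to force equality. The only difference is that you make the use of surjectivity explicit (writing an arbitrary $w \in N$ as $f(x)$), a step the paper leaves implicit; this is a welcome clarification, not a different argument.
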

\begin{proof}
Since
	\begin{align*}
	&1= \tilde{\circ}(x,e,x) \leq \tilde{\bullet}(f (x),f (e),f (x)),\\
	&1 = \tilde{\circ}(e,x,x) \leq \tilde{\bullet}(f (e),f (x),f (x)).
	\end{align*}
	Hence, $\tilde{\bullet}(f (x), f (e), f (x)) =\tilde{\bullet}(f (e), f (x), f (x)) = 1$ and $f(e)$ is the identity element of $N$.
\end{proof}
\begin{proposition}
		Let $(M, \tilde{\circ})$ and $(N,\tilde{\bullet})$ be two $A$-vague monoids with respect to $A$-indistinguishability operators $E$ and $F$ respectively such that $E\leq F$. Then the identity map $\mathrm{id}: M \rightarrow M$ is a homomorphism from $(M,\tilde{\circ})$ onto $(M,\tilde{\bullet})$.
\end{proposition}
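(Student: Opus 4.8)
The plan is to reduce the assertion that $\mathrm{id}$ is a homomorphism to the hypothesis $E \leq F$, by expressing both vague binary operations through the representation established just before this proposition. Recall that for a fixed monoid $(M,\circ)$ the bijective correspondence between $A$-vague monoids on $M$ and regular $A$-indistinguishability operators writes each vague operation as $\tilde{\circ}(x,y,z) = E(x\circ y,z)$ and $\tilde{\bullet}(x,y,z) = F(x\circ y,z)$, where $\circ$ is the common associated crisp monoid operation. I would begin by making this representation explicit for both $\tilde{\circ}$ and $\tilde{\bullet}$, so that the two vague operations differ only through their indistinguishability operators $E$ and $F$.

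Next, I would unwind the definition of homomorphism for the particular map $f = \mathrm{id}$. Since $f(x)=x$, $f(y)=y$ and $f(z)=z$, the defining inequality $\tilde{\circ}(x,y,z) \leq \tilde{\bullet}(f(x),f(y),f(z))$ becomes simply $\tilde{\circ}(x,y,z) \leq \tilde{\bullet}(x,y,z)$ for all $x,y,z \in M$. Thus the whole statement collapses to verifying this single pointwise inequality between the two vague operations.

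Finally, I would close the argument by a one-line monotonicity computation: for all $x,y,z\in M$,
\begin{align*}
\tilde{\circ}(x,y,z) = E(x\circ y,z) \leq F(x\circ y,z) = \tilde{\bullet}(x,y,z),
\end{align*}
where the middle inequality is exactly the hypothesis $E \leq F$ evaluated at the pair $(x\circ y,\,z)$. This yields the homomorphism condition, and (via the preceding lemma) $\mathrm{id}$ automatically carries the identity element of $(M,\tilde{\circ})$ to that of $(M,\tilde{\bullet})$.

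The step I expect to demand the most care is not the calculation but a bookkeeping point: confirming that the two vague monoids share the same associated crisp operation $\circ$, so that the representations of $\tilde{\circ}$ and $\tilde{\bullet}$ can be compared argument-by-argument. This is guaranteed because $E$ and $F$ are indistinguishability operators on the same set $M$ and the representation is taken over the fixed underlying monoid $(M,\circ)$; once this is noted, the inequality $E\leq F$ transfers verbatim to $\tilde{\circ}\leq\tilde{\bullet}$, and there is no genuine obstacle beyond invoking the representation correctly.
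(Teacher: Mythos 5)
Your proposal is correct and matches the paper's own proof essentially verbatim: the paper likewise writes $\tilde{\circ}(x,y,z)=E(x\circ y,z)\leq F(x\circ y,z)\leq \tilde{\bullet}(x,y,z)$, using the representation of vague operations through their indistinguishability operators over the common associated monoid and applying $E\leq F$ pointwise. Your additional remark about both vague monoids sharing the same underlying crisp operation $\circ$ is a reasonable piece of bookkeeping that the paper leaves implicit.
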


\begin{proof}
	$$\tilde{\circ}(x,y,z)=E(x\circ y,z)\leq F(x\circ y,z)\leq \tilde{\bullet}(x,y,z).$$
\end{proof}
A crisp monoid $(M,\circ)$ is an $A$-vague monoid when $x\circ y=z$ then $\circ(x,y,z)=1$ and $0$ otherwise  for all $x, y, z\in M$.
\begin{corollary}
	Let  $(M, \tilde{\circ})$ be an $A$-vague monoid with respect to an $A$-indistinguishability operator $E$ separating points. Then the identity map $\mathrm{id}: M \rightarrow M$ is a homomorphism from $(M,\circ)$ onto $(M,\tilde{\circ})$.
\end{corollary}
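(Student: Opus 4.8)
The plan is to read this corollary off from the preceding proposition, once the two $A$-vague monoid structures on $M$ are matched with the correct $A$-indistinguishability operators. Recall that $(M,\circ)$ is the crisp monoid associated to $(M,\tilde{\circ})$, so $x\circ y=z$ holds precisely when $\tilde{\circ}(x,y,z)=1$. When we regard the crisp monoid $(M,\circ)$ as an $A$-vague monoid via the rule $\circ(x,y,z)=1$ if $x\circ y=z$ and $0$ otherwise, its associated indistinguishability operator is obtained from $E_{0}(x,y)=\circ(x,e,y)$; since $x\circ e=x$, this is just the crisp equality $E_{0}(x,y)=1$ if $x=y$ and $0$ otherwise.

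First I would check the pointwise inequality $E_{0}\leq E$. For $x=y$ both sides equal $1$ by reflexivity of $E$, and for $x\neq y$ the left side is $0$, so the inequality is automatic because $E$ is $[0,1]$-valued. Having established $E_{0}\leq E$, the previous proposition applies directly with $(M,\circ)$ playing the role of the $E$-side and $(M,\tilde{\circ})$ the role of the $F$-side, which yields that $\mathrm{id}:M\to M$ is a homomorphism from $(M,\circ)$ onto $(M,\tilde{\circ})$.

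For completeness I would also record the one-line verification by hand, which bypasses the operator comparison. By definition $\mathrm{id}$ is a homomorphism iff $\circ(x,y,z)\leq\tilde{\circ}(x,y,z)$ for all $x,y,z\in M$. If $x\circ y=z$ then the left side is $1$, and by the defining property of the associated monoid $\tilde{\circ}(x,y,z)=1$ as well, so the two sides coincide; if $x\circ y\neq z$ then the left side is $0$ and the inequality holds trivially. I do not anticipate any genuine obstacle here — the only point requiring care is to confirm that the crisp monoid is being viewed with the crisp equality $E_{0}$ as its indistinguishability operator, so that the comparison $E_{0}\leq E$ is exactly the instance of the preceding proposition needed for this corollary.
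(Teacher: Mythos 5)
Your proposal is correct and is essentially the paper's intended argument: the corollary is stated right after the remark that a crisp monoid is an $A$-vague monoid (whose associated indistinguishability operator is crisp equality $E_{0}$) and after the proposition for $E\leq F$, so checking $E_{0}\leq E$ and invoking that proposition is exactly the implicit proof. Your supplementary one-line verification $\circ(x,y,z)\leq\tilde{\circ}(x,y,z)$ is also sound, and is in fact the more self-contained of the two routes, since it avoids relying on the representation $\tilde{\circ}(x,y,z)=E(x\circ y,z)$ that the proposition's proof presupposes.
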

\begin{definition}
	Let $f:M \rightarrow N$ be a homomorphism from $(M,\tilde{\circ})$ onto $(N,\tilde{\bullet})$. The kernel of $f$ is the fuzzy subset $\sigma$
	of $M$ defined by $\sigma(x)= E(f(x),e')$ for all $x \in M  $ where $e'$ is the identity element of $(N,\tilde{\bullet})$.
\end{definition}
\begin{proposition}
Let  $(M, \tilde{\circ})$ be an $A$-vague monoid with respect to an $A$-indistinguishability operator $E$ separating points. The kernel of the identity map $\mathrm{id}: M \rightarrow M$ is an $A$-fuzzy submonoid of $(M, \circ)$.
\end{proposition}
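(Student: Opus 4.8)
The plan is to unravel the definition of the kernel and reduce the claim to the two defining conditions of an $A$-fuzzy submonoid. First I would identify the kernel $\sigma$ of $\mathrm{id}$ explicitly: since the identity map is the homomorphism from the crisp monoid $(M,\circ)$ onto the $A$-vague monoid $(M,\tilde{\circ})$ supplied by the preceding corollary, and the associated monoid of $(M,\tilde{\circ})$ has identity element $e$, the kernel formula specializes to the fuzzy subset $\sigma(x)=E(\mathrm{id}(x),e)=E(x,e)$ for all $x\in M$, with the target indistinguishability operator being $E$ itself. Thus the whole proposition amounts to checking that $x\mapsto E(x,e)$ satisfies conditions (1) and (2) of an $A$-fuzzy submonoid of $(M,\circ)$.

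The unit condition is immediate: $\sigma(e)=E(e,e)=1$ by reflexivity, i.e. condition (1) of an $A$-indistinguishability operator. For the aggregation condition $A(\sigma(x),\sigma(y))\le\sigma(x\circ y)$, the key is to exploit that the operator $E$ attached to an $A$-vague monoid is regular with respect to its associated operation $\circ$; this is exactly the content of the bijective correspondence between $A$-vague monoids and regular $A$-indistinguishability operators stated earlier. Right regularity applied with multiplier $y$ gives $E(x,e)\le E(x\circ y,\,e\circ y)=E(x\circ y,y)$, using $e\circ y=y$. Because $A$ is non-decreasing in each argument, this yields $A(E(x,e),E(y,e))\le A(E(x\circ y,y),E(y,e))$, and then transitivity of $E$ applied to the chain $x\circ y,\,y,\,e$ gives $A(E(x\circ y,y),E(y,e))\le E(x\circ y,e)$. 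Chaining these bounds produces $A(\sigma(x),\sigma(y))\le E(x\circ y,e)=\sigma(x\circ y)$, which is precisely condition (1).

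I expect the delicate points to be bookkeeping rather than genuine difficulty. The first is pinning down that the target operator in the kernel formula is $E$ and that $e'$ is the monoid identity $e$, so that indeed $\sigma(x)=E(x,e)$. The second, and the real crux, is making explicit that regularity of $E$ is available and choosing the correct one-sided form, here the right one, so that $e\circ y=y$ collapses the second argument into the shape needed for transitivity. Once regularity is in hand the argument is a single application of monotonicity of $A$ followed by transitivity of $E$, so no case analysis is required; the left-regular variant $E(y,e)\le E(x\circ y,x)$ combined with transitivity through $x$ would give the same conclusion and could serve as a check.
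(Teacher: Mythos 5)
Your proof is correct and takes essentially the same route as the paper's: identify the kernel as $\sigma(x)=E(x,e)$, note $\sigma(e)=E(e,e)=1$, and then chain the regularity bound $E(x,e)\le E(x\circ y,y)$ with monotonicity of $A$ and the transitivity axiom of $E$ to obtain $A(\sigma(x),\sigma(y))\le E(x\circ y,e)=\sigma(x\circ y)$. The only difference is presentational: you explicitly invoke and justify the regularity of $E$ with respect to $\circ$ (via the stated correspondence with regular $A$-indistinguishability operators), whereas the paper uses that inequality silently inside its displayed computation.
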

\begin{proof}
	Firstly, we have that $ker$id$(e) =E(e,e)=1$. On the other side, it holds that
	\begin{align*}
	A(ker\textnormal{id}(x),ker\textnormal{id}(y))&=A(E(x,e),E(y,e))\\
						&\leq A(E(x\circ y,y),E(y,e))\\
						&\leq E(x\circ y,e)\\
						&=ker\textnormal{id}(x\circ y).
	\end{align*}
\end{proof}

\subsection{Vague monoid by uninorms and nullnorms}
Since uninorm and nullnorm are special binary aggregation operators. Then we can obtain same conclusions and just set out them as follow.

\subsection{Uninorm vague monoid}

\begin{definition}
	Let $U$ be a uninorm and $X$ a set. A fuzzy relation $E : X\times X \rightarrow [0, 1] $ is a $U$-indistinguishability operator, which satisfies the following conditions
    for all $x, y, z \in X $:
	\begin{enumerate}[(1)]
		\item  $E(x, x) = 1 $;
		\item $ E(x, y) = E(y, x)$;
		\item $ U(E(x, y),E(y, z)) \leq E(x, z).$
	\end{enumerate}
	If $E(x, y) = 1$ implies $x = y,$ then it is said that $E$ separates points.
\end{definition}

\begin{definition}
	Let $E$ be a $U$-indistinguishability operator on $M$. A $U$-vague binary operation $\tilde{\circ}: M \times M \times M \rightarrow [0, 1]$
	is a fuzzy binary operation on $M$ when it satisfies the following conditions for all $x, y, z, x' , y' , z' \in M$:
	\begin{enumerate}[(1)]
		\item  $U(\tilde{\circ}(x, y, z),E(x, x'),E(y, y'),E(z, z')) \leq \tilde{\circ}(x' , y' , z ');$
		\item  $U(\tilde{\circ}(x, y, z), \tilde{\circ}(x, y, z')) \leq E(z, z');$
		\item
		 For all $ x, y \in M$, there exists $z\in M$ such that $\tilde{\circ} (x, y, z) = 1$.
	\end{enumerate}
	
\end{definition}
\begin{proposition}
	Let E be a $U$-indistinguishability operator on $M$ separating points and $\tilde{\circ}$ a vague binary operation on
	$M$. Then the $z$ in Definition~\ref{fuzzy_vague} \textnormal{(3)} is unique.
\end{proposition}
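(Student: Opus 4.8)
The plan is to mirror the argument used for the general $A$-vague case, specializing the aggregation operator to the uninorm $U$. The only genuinely uninorm-specific fact I need is that $U(1,1)=1$, which I would establish at the outset: since $e$ is the identity of $U$ we have $U(1,e)=1$, and because $U$ is non-decreasing in each argument and $1\geq e$, it follows that $U(1,1)\geq U(1,e)=1$, hence $U(1,1)=1$.

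With this in hand, I would take two elements $z,z'\in M$ with $\tilde{\circ}(x,y,z)=1$ and $\tilde{\circ}(x,y,z')=1$ for the given $x,y$, and invoke condition~(2) of the definition of a $U$-vague binary operation. This yields
$$
1 = U(1,1) = U\bigl(\tilde{\circ}(x,y,z),\tilde{\circ}(x,y,z')\bigr) \leq E(z,z'),
$$
so that $E(z,z')=1$.

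Finally, since $E$ separates points by hypothesis, $E(z,z')=1$ forces $z=z'$, giving the desired uniqueness. I do not expect any real obstacle here: the entire argument is a direct transcription of the earlier proposition on $A$-vague binary operations, with $A$ replaced by $U$, and the sole point meriting explicit verification is the identity $U(1,1)=1$, which the $t$-norm/$t$-conorm/uninorm machinery of the preliminaries supplies immediately.
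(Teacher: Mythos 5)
Your proof is correct and follows essentially the same route as the paper: the paper proves this for general $A$-vague binary operations (apply condition (2), conclude $E(z,z')=1$, then use separation of points) and simply transfers it to the uninorm case without further comment. Your explicit verification that $U(1,1)=1$ via $U(1,1)\geq U(1,e)=1$ is a small but worthwhile addition, since for a general aggregation operator this is a boundary axiom, whereas for a uninorm it genuinely needs that one-line monotonicity argument.
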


\begin{definition}
	Let $\tilde{\circ}$ be a $U$-vague binary operation on $M$ with respect to a $U$-indistinguishability operator $E$ on $M$.
	Then $(M, \tilde{\circ})$ is a $U$-vague monoid if and only if it satisfies the following properties:
	\begin{enumerate}[(1)]
		\item  Associativity:  $$U(\tilde{\circ}(y,z,d),\tilde{\circ}(x,d,m),\tilde{\circ}(x,y,q),\tilde{\circ}(q,z,w)) \leq E(m,w),$$
		 where $x, y, z, d, m, q \in M$;
		\item Identity:	There exists identity element $e \in M$ such that $$\tilde{\circ}(e,x,x)=1\ \textnormal{and } \tilde{\circ}(x,e,x) = 1\ \textnormal{for all }x\in M.$$	
	\end{enumerate}
\end{definition}
\begin{definition}
	A $U$-vague monoid is commutative if and only if
	for all $x,y,m,w \in M$, $$U(\tilde{\circ}(x,y,m), \tilde{\circ}(y,x,w)))\leq E(m,w).$$	
\end{definition}

Next, we give a sufficient condition for the uniqueness of identity element of $U$-vague monoid $(m,\tilde{\circ})$.

\begin{proposition}
	Let $U$ be a uninorm, $E$ a $U$-indistinguishability operator separating points on set $M$ and $(M,\tilde{\circ})$ a $U$-vague monoid. Then the identity element is unique.
\end{proposition}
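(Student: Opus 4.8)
The plan is to mirror the proof of the analogous statement for a general aggregation operator $A$ given earlier in the excerpt, since a uninorm $U$ is itself a binary aggregation operator and the $U$-vague monoid axioms are the verbatim specializations of the $A$-vague ones. Thus the whole argument transfers, and the only thing I need to check separately is that $U$ behaves like a generic aggregation operator on the single boundary value that appears.

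First I would assume that $e$ and $e'$ are both identity elements of the $U$-vague monoid $(M,\tilde{\circ})$, with the goal of deducing $e=e'$. Using that $e$ is an identity, the identity axiom $\tilde{\circ}(e,x,x)=1$ gives $\tilde{\circ}(e,e',e')=1$ upon taking $x=e'$. Using that $e'$ is an identity, the identity axiom $\tilde{\circ}(x,e',x)=1$ gives $\tilde{\circ}(e,e',e)=1$ upon taking $x=e$. This produces two triples that each receive full membership under $\tilde{\circ}$ and differ only in their last coordinate.

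Next I would invoke the second defining condition of a $U$-vague binary operation, namely $U(\tilde{\circ}(x, y, z), \tilde{\circ}(x, y, z')) \leq E(z, z')$, with $x=e$, $y=e'$, $z=e'$ and $z'=e$. Substituting the two membership-$1$ values yields
$$U(1,1)\leq E(e',e).$$
Here I would record the one genuinely uninorm-specific observation, $U(1,1)=1$: this holds because $U$ is non-decreasing and possesses a neutral element $e_U\in[0,1]$, so $U(1,1)\geq U(1,e_U)=1$; equivalently it is just the upper boundary condition $A(1,\dots,1)=1$ that every aggregation operator satisfies. Hence $E(e',e)=1$.

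Finally, since $E$ separates points, $E(e',e)=1$ forces $e'=e$, establishing uniqueness. I expect no genuine obstacle: the argument is a direct specialization of the already-proven $A$-version, and the sole point requiring verification is $U(1,1)=1$, which guarantees that the two full-membership inputs propagate through $U$ unchanged.
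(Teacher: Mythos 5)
Your proof is correct and takes essentially the same route as the paper: the paper gives no separate proof for the uninorm case, deferring to the identical argument for the general aggregation-operator version, which you reproduce exactly (the two identity-axiom instances $\tilde{\circ}(e,e',e')=1$ and $\tilde{\circ}(e,e',e)=1$, the second vague-operation axiom, and separation of points). Your explicit verification that $U(1,1)=1$ via monotonicity and the neutral element is a small, correct addition that the paper leaves implicit in writing $1=A(\tilde{\circ}(e,e',e'),\tilde{\circ}(e,e',e))$.
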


\begin{definition}
	Let $\circ$ be a binary operation on $M$, and $E$ a $U$-indistinguishability operator on $M$. $E$ is regular with respect to $\circ$ if and only if for all $x, y, z \in M$,
	$$E(x,y) \leq E(x\circ z,y \circ z)$$
	and
	$$E(x,y) \leq E(z\circ x,z \circ y).$$
\end{definition}
Under the condition of regular $U$-indistinguishability operators, the definition of the fuzzy mapping $\tilde{\circ}$ can be given and further the $U$-vague monoid is obtained.

\begin{proposition}
	Let $E$ be a regular $U$-indistinguishability operator on $M$ with respect to a binary operation $\circ$ on $M$.
	\begin{enumerate}[(1)]
		\item
		 The fuzzy mapping $\tilde{\circ} : M \times M \times M \rightarrow [0, 1]$ defined by
		$$\tilde{\circ}(x,y,z) = E(x \circ y,z), \forall x,y,z \in M,$$
		is a $U$-vague binary operation on $M$.
		\item
		 If $(M, \circ)$ is a monoid, then $(M,\tilde{\circ})$ is a $U$-vague monoid.
	\end{enumerate}
\end{proposition}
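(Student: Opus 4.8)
The plan is to exploit the fact that a uninorm $U$ is, in particular, a binary aggregation operator that is additionally commutative, associative and non-decreasing in each argument; consequently its $n$-ary extension is well defined and every manipulation used in the corresponding statement for a general aggregation operator $A$ carries over verbatim with $A$ replaced by $U$. Thus I would establish the two claims by checking the defining conditions directly, using only three ingredients: the transitivity axiom $U(E(a,b),E(b,c))\le E(a,c)$ of the $U$-indistinguishability operator, its reflexivity $E(a,a)=1$ and symmetry, and the two regularity inequalities $E(x,x')\le E(x\circ y,\,x'\circ y)$ and $E(y,y')\le E(x'\circ y,\,x'\circ y')$.

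For part (1) I would verify the three conditions of a $U$-vague binary operation. Condition (2) is the quickest: by symmetry and transitivity,
$$U(\tilde{\circ}(x,y,z),\tilde{\circ}(x,y,z'))=U(E(x\circ y,z),E(x\circ y,z'))\le E(z,z').$$
Condition (3) follows by taking $z=x\circ y$, whence $\tilde{\circ}(x,y,z)=E(x\circ y,x\circ y)=1$ by reflexivity. Condition (1) is the substantive one: starting from $U(E(x\circ y,z),E(x,x'),E(y,y'),E(z,z'))$, I would first absorb $E(z,z')$ into $E(x\circ y,z)$ by transitivity to reach $E(x\circ y,z')$, then use right regularity $E(x,x')\le E(x\circ y,x'\circ y)$ together with transitivity to pass to $E(x'\circ y,z')$, and finally use left regularity $E(y,y')\le E(x'\circ y,x'\circ y')$ and transitivity once more to obtain $E(x'\circ y',z')=\tilde{\circ}(x',y',z')$. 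Each regrouping step is licensed by the associativity and monotonicity of $U$.

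For part (2) I would check associativity and the identity axiom. Associativity amounts to the chain
$$U(E(y\circ z,d),E(x\circ d,m),E(x\circ y,q),E(q\circ z,w))\le E(m,w),$$
obtained by applying regularity to lift $E(y\circ z,d)$ to $E(x\circ(y\circ z),x\circ d)$ and $E(x\circ y,q)$ to $E((x\circ y)\circ z,q\circ z)$, collapsing the result via transitivity, and then using the associativity of $\circ$ to identify $x\circ(y\circ z)$ with $(x\circ y)\circ z$. The identity axiom is immediate: if $e$ is the unit of $(M,\circ)$ then $\tilde{\circ}(x,e,x)=E(x\circ e,x)=E(x,x)=1$ and likewise $\tilde{\circ}(e,x,x)=1$. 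I expect the only delicate point to be the bookkeeping in the associativity and condition-(1) chains, namely choosing the correct side of the regularity inequality for each factor and keeping the arguments of $U$ grouped so that monotonicity applies; this is routine once the regularity directions are matched to the corresponding factors.
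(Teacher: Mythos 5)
Your proposal is correct and follows essentially the same route as the paper: the paper proves this result for a general aggregation operator $A$ (and notes the uninorm case is obtained by the same argument), and your chains for condition (1), the transitivity/reflexivity checks, and the associativity/identity verification coincide step for step with that proof, including the correct matching of the two regularity directions. Your explicit remark that the regrouping of the $n$-ary $U$ is licensed by associativity, commutativity and monotonicity is a point the paper leaves implicit, but it is the same argument.
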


Conversely, a monoid $(M,\circ)$ can be obtained from a vague monoid $(M, \tilde{\circ})$.
\begin{proposition}
	Let $(M, \tilde{\circ})$ be a vague monoid with respect to a $U$-indistinguishability operator $E$ separating points. Then $(M, \circ)$ is a monoid where $x\circ y$ is the unique $z\in M$ such that $ \tilde{\circ}(x, y, z) = 1$.
\end{proposition}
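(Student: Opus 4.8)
The plan is to mirror the argument already used for the general $A$-vague monoid, since a uninorm $U$ is merely a special binary aggregation operator; the whole proof therefore rests on replacing $A$ by $U$ and checking that $U$ behaves correctly on the all-ones input. First I would confirm that $\circ$ is well defined: condition (3) in the definition of a $U$-vague binary operation guarantees that for every $x, y \in M$ there is some $z$ with $\tilde{\circ}(x, y, z) = 1$, and the preceding uniqueness proposition (valid because $E$ separates points) forces this $z$ to be unique. Hence $x \circ y := z$ is a genuine binary operation on $M$, and by construction $\tilde{\circ}(x, y, x \circ y) = 1$ for all $x, y \in M$.

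Next I would verify associativity. Setting $d = y \circ z$, $q = x \circ y$, $m = x \circ (y \circ z)$ and $w = (x \circ y) \circ z$, each of the four entries $\tilde{\circ}(y,z,d)$, $\tilde{\circ}(x,d,m)$, $\tilde{\circ}(x,y,q)$, $\tilde{\circ}(q,z,w)$ equals $1$ by the defining property of $\circ$. Feeding these into the associativity axiom of the $U$-vague monoid gives
\begin{align*}
1 = U(\tilde{\circ}(y,z,d),\tilde{\circ}(x,d,m),\tilde{\circ}(x,y,q),\tilde{\circ}(q,z,w)) \leq E(m, w),
\end{align*}
so $E(x \circ (y \circ z), (x \circ y) \circ z) = 1$ and, since $E$ separates points, $x \circ (y \circ z) = (x \circ y) \circ z$.

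For the identity I would invoke the element $e$ from the identity axiom, which satisfies $\tilde{\circ}(x, e, x) = 1$ and $\tilde{\circ}(e, x, x) = 1$ for all $x \in M$. Combining $\tilde{\circ}(x, e, x \circ e) = 1$ with $\tilde{\circ}(x, e, x) = 1$ and applying condition (2) of the $U$-vague binary operation yields $1 = U(1, 1) \leq E(x \circ e, x)$, whence $x \circ e = x$; the symmetric computation starting from $\tilde{\circ}(e, x, e \circ x) = 1$ and $\tilde{\circ}(e, x, x) = 1$ gives $e \circ x = x$. Thus $e$ is a two-sided identity for $(M, \circ)$.

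The only point genuinely special to the uninorm setting — and the one I would flag as the main (if minor) obstacle — is justifying that $U$ evaluated at all-ones arguments returns $1$, which is what makes the left-hand sides above collapse to $1$. This is immediate from the uninorm axioms: monotonicity together with the neutral-element identity $U(1, e) = 1$ gives $U(1, 1) \geq U(1, e) = 1$, hence $U(1, 1) = 1$, and the associative $n$-ary extension then inherits $U(1, \ldots, 1) = 1$. Once this is recorded, every inequality above reduces to the aggregation-operator computation already carried out for the $A$-vague case.
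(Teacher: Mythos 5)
Your proposal is correct and follows essentially the same route as the paper: the paper proves this statement by the identical argument given for the $A$-vague case (instantiating the associativity and identity axioms at the points where $\tilde{\circ}$ equals $1$ and using that $E$ separates points), and explicitly notes that uninorm results follow by specialization. Your extra observations — well-definedness of $\circ$ via condition (3) plus the uniqueness proposition, and the check that $U(1,\ldots,1)=1$ follows from monotonicity and $U(1,e)=1$ — are sound refinements of details the paper leaves implicit, not a different approach.
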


\begin{definition}
Let $(M, \tilde{\circ})$ be a vague monoid with respect to a $U$-indistinguishability operator $E$ separating points. Then $(M,\circ)$ is the monoid associated to the vague monoid $(M,\tilde{\circ})$.
\end{definition}

Further, if $U$ is a uninorm and $(M,\circ)$ is a monoid, then there exist bijective maps between their $U$-vague monoids and regular $U$-indistinguishability operators and they can represent each other:


\begin{itemize}
	\item $\tilde{\circ}(x,y,z)=E(x\circ y,z);$
	\item $E(x,y)=\tilde{\circ}(x,e,y).$
\end{itemize}

Regarding commutativity, the connection between $(M,\tilde{\circ})$ and $(M, \circ)$ is given.

\begin{proposition}
	Let $E$ be a $U$-indistinguishability operator separating points, $(M, \tilde{\circ})$ a $U$-vague operation and $(M, \circ)$ its associated operation $(x \circ y = z$ if and only if $\tilde{\circ}(x, y, z) = 1)$. Then $(M, \tilde{\circ})$ is commutative if and only if
	$(M, \circ)$ is commutative.
\end{proposition}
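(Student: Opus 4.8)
The plan is to mirror the proof of the analogous $A$-vague monoid proposition given earlier, specializing the aggregation operator to the uninorm $U$, and to exploit the representation $\tilde{\circ}(x,y,z)=E(x\circ y,z)$ of the $U$-vague operation in terms of its associated operation, which was recorded just before this statement. Since $U$ satisfies all the structural facts ($U(1,1)=1$, monotonicity, and the axioms of a $U$-indistinguishability operator) that the general aggregation argument used, the same two implications should go through verbatim.

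First I would treat the forward direction. Assuming $(M,\tilde{\circ})$ is commutative, I would instantiate the commutativity inequality $U(\tilde{\circ}(x,y,m),\tilde{\circ}(y,x,w))\leq E(m,w)$ at $m=x\circ y$ and $w=y\circ x$. Because $x\circ y$ (resp. $y\circ x$) is by definition the unique element $z$ with $\tilde{\circ}(x,y,z)=1$ (resp. $\tilde{\circ}(y,x,z)=1$), both arguments of $U$ equal $1$, so $1=U(1,1)\leq E(x\circ y,y\circ x)$. Hence $E(x\circ y,y\circ x)=1$, and since $E$ separates points this yields $x\circ y=y\circ x$.

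For the converse, I would assume $(M,\circ)$ is commutative and substitute the representation $\tilde{\circ}(x,y,z)=E(x\circ y,z)$. For arbitrary $x,y,m,w\in M$ this rewrites the left-hand side as $U(E(x\circ y,m),E(y\circ x,w))$, and commutativity of $\circ$ replaces $y\circ x$ by $x\circ y$, giving $U(E(x\circ y,m),E(x\circ y,w))$. Using the symmetry axiom of $E$ to turn $E(x\circ y,m)$ into $E(m,x\circ y)$ and then the transitivity axiom of the $U$-indistinguishability operator, namely $U(E(a,b),E(b,c))\leq E(a,c)$ with $b=x\circ y$, I obtain $U(E(m,x\circ y),E(x\circ y,w))\leq E(m,w)$, which is precisely the commutativity condition for $(M,\tilde{\circ})$.

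I do not expect a genuine obstacle here. The only point requiring care is arranging the two arguments of $U$ so that they share the common middle term $x\circ y$ in the order demanded by the transitivity axiom; this is exactly where the symmetry of $E$ must be invoked. Everything else is routine once the representation $\tilde{\circ}(x,y,z)=E(x\circ y,z)$ and the uniqueness of the associated product (from $E$ separating points) are in hand.
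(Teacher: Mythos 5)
Your proof is correct and follows essentially the same route as the paper: the paper states this uninorm version without a separate proof (as a specialization of the $A$-vague monoid proposition), and its proof of that general proposition is exactly your argument --- instantiate the commutativity inequality at $m=x\circ y$, $w=y\circ x$ and use separation for one direction, and substitute $\tilde{\circ}(x,y,z)=E(x\circ y,z)$ plus transitivity of $E$ for the other. Your only addition is to make explicit the use of the symmetry of $E$ to line up the middle term $x\circ y$ before applying transitivity, a step the paper leaves implicit.
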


\subsection{Nullnorm vague monoid}

\begin{definition}
	Let $F$ be a nullnorm and $X$ a set. A fuzzy relation $E : X\times X \rightarrow [0, 1] $ is an $F$-indistinguishability operator, which satisfies the following conditions for all $x, y, z  \in X $:
	\begin{enumerate}[(1)]
		\item  $E(x, x) = 1;$
		\item $ E(x, y) = E(y, x);$
		\item $ F(E(x, y),E(y, z)) \leq E(x, z).$
	\end{enumerate}
	If $E(x, y) = 1$ implies $x = y,$ then it is said that $E$ separates points.
\end{definition}

\begin{definition}
	Let $E$ be an $F$-indistinguishability operator on $M$. An $F$-vague binary operation $\tilde{\circ}: M \times M \times M \rightarrow [0, 1]$ is a fuzzy binary
	operation on $M$ when it satisfies the following conditions for all $x, y, z, x' , y' , z' \in M$:
	\begin{enumerate}[(1)]
		\item
		 $F(\tilde{\circ}(x, y, z),E(x, x'),E(y, y'),E(z, z')) \leq \tilde{\circ}(x' , y' , z ');$
		\item
		 $F(\tilde{\circ}(x, y, z), \tilde{\circ}(x, y, z')) \leq E(z, z');$
		\item
		 For all $ x, y \in M$ there exists $z\in M$ such that $\tilde{\circ} (x, y, z) = 1$.
	\end{enumerate}
	
\end{definition}
\begin{proposition}
	Let E be $F$-indistinguishability operator on $M$ separating points and $\tilde{\circ}$ a vague binary operation on
	$M$. Then the $z$ of Definition~\ref{fuzzy_vague} is unique.
\end{proposition}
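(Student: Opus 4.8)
The plan is to mirror the argument already used for the general aggregation operator $A$, since a nullnorm $F$ is merely a special binary aggregation operation and the only structural fact the proof needs is that $F(1,1)=1$. First I would fix $x,y\in M$ and, using condition (3) of the $F$-vague binary operation, suppose that both $z,z'\in M$ satisfy $\tilde{\circ}(x,y,z)=1$ and $\tilde{\circ}(x,y,z')=1$. The goal is then to force $z=z'$.

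Next I would invoke condition (2) of the $F$-vague binary operation, namely $F(\tilde{\circ}(x,y,z),\tilde{\circ}(x,y,z'))\leq E(z,z')$, and substitute the two equalities to get $F(1,1)\leq E(z,z')$. The key observation is that $F(1,1)=1$: since the absorbing element $k$ of $F$ satisfies $k\leq 1$, the nullnorm axiom $F(1,t)=t$ valid for all $t\geq k$ applies with $t=1$, yielding $F(1,1)=1$. Combining this with the inequality gives $1\leq E(z,z')$, and because $E$ takes values in $[0,1]$ we conclude $E(z,z')=1$.

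Finally, since $E$ separates points, the equality $E(z,z')=1$ forces $z=z'$, which establishes uniqueness. I expect no genuine obstacle in this proof; the only point requiring a line of justification is the verification $F(1,1)=1$ directly from the nullnorm axioms in Definition~\ref{def4}, which is immediate. Everything else is a direct transcription of the corresponding proposition proved for $A$-indistinguishability operators, with $A$ replaced by $F$.
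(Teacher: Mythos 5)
Your proposal is correct and follows exactly the argument the paper gives for the general aggregation-operator case (which the paper then reuses implicitly for nullnorms without writing a separate proof): apply condition (2) to the two witnesses, reduce to $F(1,1)\leq E(z,z')$, and invoke separation of points. Your explicit verification that $F(1,1)=1$ from the nullnorm axiom $F(1,x)=x$ for $x\geq k$ is a small but worthwhile addition, since for a nullnorm this boundary value is an axiom-level consequence rather than a stated condition as it is for aggregation operators.
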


\begin{definition}
	Let $\tilde{\circ}$ be an $F$-vague binary operation on $M$ with respect to an $F$-indistinguishability operator $E$ on $M$.
	Then $(M, \tilde{\circ})$ is an $F$-vague monoid if and only if it satisfies the following properties.
	\begin{enumerate}[(1)]
		\item  Associativity:   $$F(\tilde{\circ}(y,z,d),\tilde{\circ}(x,d,m),\tilde{\circ}(x,y,q),\tilde{\circ}(q,z,w)) \leq E(m,w),$$
		where $ x, y, z, d, m, q \in M$.
		\item Identity: There exists identity element $e \in M$ such that $$\tilde{\circ}(e,x,x)=1\ \textnormal{and}\ \tilde{\circ}(x,e,x) = 1\ \textnormal{	for\ all}\ x \in M.$$
	\end{enumerate}
\end{definition}
\begin{definition}
	An $F$-vague monoid is commutative if and only if
	for all $x,y,m,w \in M$, $$F(\tilde{\circ}(x,y,m), \tilde{\circ}(y,x,w))\leq E(m,w)).$$	
\end{definition}

Next, we give a sufficient condition for the uniqueness of identity element of $F$-vague monoid $(M,\tilde{\circ})$.

\begin{proposition}
	Let $F$ be nullnorm, $E$ an $F$-indistinguishability operator separating points on set $M$ and $(M,\tilde{\circ})$ an $F$-vague monoid. Then the identity element is unique.
\end{proposition}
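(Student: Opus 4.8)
The plan is to reproduce, almost verbatim, the argument already given for general aggregation operators, since a nullnorm $F$ is itself a binary aggregation operator satisfying the monotonicity and boundary conditions of Definition~\ref{aggregation function}. First I would suppose that $M$ admits two identity elements $e$ and $e'$, and aim to show $E(e,e')=1$; the separation property of $E$ will then force $e=e'$.

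The key steps, in order, are as follows. Applying the identity axiom for $e$, namely $\tilde{\circ}(e,x,x)=1$ for all $x\in M$, at $x=e'$ gives $\tilde{\circ}(e,e',e')=1$. Applying the identity axiom for $e'$, namely $\tilde{\circ}(x,e',x)=1$ for all $x\in M$, at $x=e$ gives $\tilde{\circ}(e,e',e)=1$. I would then invoke condition (2) of the $F$-vague binary operation, $F(\tilde{\circ}(x,y,z),\tilde{\circ}(x,y,z'))\leq E(z,z')$, with the substitution $x=e$, $y=e'$, $z=e'$, $z'=e$, obtaining
$$F(\tilde{\circ}(e,e',e'),\tilde{\circ}(e,e',e))=F(1,1)\leq E(e',e).$$

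The only point that requires the specific structure of the nullnorm is the evaluation $F(1,1)=1$; this is immediate from the boundary condition $F(1,x)=x$ for all $x\geq k$ applied at $x=1\geq k$ (equivalently from $F(1,1,\dots,1)=1$ in Definition~\ref{aggregation function}). Hence $E(e',e)\geq 1$, so $E(e',e)=1$, and since $E$ separates points we conclude $e=e'$. I do not anticipate any genuine obstacle: the argument is structurally identical to the aggregation-operator case, and the passage from a general $A$ to a nullnorm $F$ is transparent because the single numerical fact used, $F(1,1)=1$, holds for every nullnorm.
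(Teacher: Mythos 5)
Your proposal is correct and is essentially the paper's own argument: the paper states this nullnorm proposition without a separate proof, deferring to the proof given for general aggregation operators, and your argument is precisely that proof (two applications of the identity axiom, then condition (2) of the vague binary operation, then separation of points) specialized to $F$. Your explicit justification that $F(1,1)=1$ via the boundary condition $F(1,x)=x$ for $x\geq k$ is a correct and welcome detail that the paper glosses over.
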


\begin{definition}
	Let $\circ$ be a binary operation on $M$, and $E$ an $F$-indistinguishability operator on $M$. $E$ is regular with
	respect to $\circ$ if and only if for all $x, y, z \in M$,
	$$E(x,y) \leq E(x\circ z,y \circ z)$$
	and
	$$E(x,y) \leq E(z\circ x,z \circ y).$$
\end{definition}
Under the condition of regular $F$-indistinguishability operators, the definition of the fuzzy mapping $\tilde{\circ}$ can be given and further the $F$-vague monoid is obtained.

\begin{proposition}
	Let $E$ be a regular $F$-indistinguishability operator on $M$ with respect to a binary operation $\circ$ on $M$.
	\begin{enumerate}[(1)]
		\item
		The fuzzy mapping $\tilde{\circ} : M \times M \times M \rightarrow [0, 1]$ defined by
		$$\tilde{\circ}(x,y,z) = E(x \circ y,z), \forall x,y,z \in M,$$
		is an $F$-vague binary operation on $M$.
		\item
		 If $(M, \circ)$ is a monoid, then $(M,\tilde{\circ})$ is an $F$-vague monoid.
	\end{enumerate}
\end{proposition}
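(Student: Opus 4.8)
The plan is to mirror the proof of the corresponding $A$-vague binary operation proposition given earlier in this section, observing that a nullnorm $F$ is in particular a binary aggregation operator that is commutative, associative and non-decreasing. Because $F$ is associative and commutative (properties (F1)--(F2)), its $n$-ary extension is well defined and we may freely regroup and reorder its arguments; together with monotonicity (F3) this lets us propagate the transitivity inequality $F(E(x,y),E(y,z))\le E(x,z)$ and the regularity inequalities of $E$ through the aggregated expressions exactly as in the general case. No property of $F$ beyond (F1)--(F3) is needed, so the argument is a transcription of the general proof with $A$ replaced by $F$.

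For part (1), I would verify the three defining conditions of an $F$-vague binary operation in turn. For condition (1), starting from $F(\tilde{\circ}(x,y,z),E(x,x'),E(y,y'),E(z,z'))=F(E(x\circ y,z),E(x,x'),E(y,y'),E(z,z'))$, I would successively absorb the comparison terms: first regroup $E(x\circ y,z)$ with $E(z,z')$ and apply transitivity to pass to $E(x\circ y,z')$; then use regularity to write $E(x,x')\le E(x\circ y,x'\circ y)$ and transitivity again to reach $E(x'\circ y,z')$; and finally use $E(y,y')\le E(x'\circ y,x'\circ y')$ with a last application of transitivity to arrive at $E(x'\circ y',z')=\tilde{\circ}(x',y',z')$. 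Condition (2) follows from symmetry and transitivity of $E$, since $F(E(x\circ y,z),E(x\circ y,z'))\le E(z,z')$. For condition (3), choosing $z=x\circ y$ gives $\tilde{\circ}(x,y,x\circ y)=E(x\circ y,x\circ y)=1$.

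For part (2), assuming $(M,\circ)$ is a monoid, I would check the two axioms of an $F$-vague monoid. Associativity is obtained by expanding $F(\tilde{\circ}(y,z,d),\tilde{\circ}(x,d,m),\tilde{\circ}(x,y,q),\tilde{\circ}(q,z,w))$ as an $F$-aggregate of $E$-values, then applying regularity (comparing $E(y\circ z,d)$ with $E(x\circ(y\circ z),x\circ d)$ and $E(x\circ y,q)$ with $E((x\circ y)\circ z,q\circ z)$) together with transitivity, and finally invoking the associativity of $\circ$ so that $x\circ(y\circ z)=(x\circ y)\circ z$; this yields $E(m,w)$. For the identity, if $e$ is the identity of $(M,\circ)$ then $\tilde{\circ}(x,e,x)=E(x\circ e,x)=E(x,x)=1$ and symmetrically $\tilde{\circ}(e,x,x)=1$, so $e$ is an identity for $\tilde{\circ}$.

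The main obstacle is bookkeeping rather than conceptual: the delicate step is condition (1) of part (1), where one must regroup the four-argument nullnorm and interleave the transitivity and regularity estimates in the correct order. All such manipulations are justified by the commutativity, associativity and monotonicity of $F$, which guarantee that replacing any argument by a larger one, or merging two arguments via transitivity, can only increase or preserve the aggregated value. Once this chain is laid out carefully, the remaining conditions follow by direct substitution.
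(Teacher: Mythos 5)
Your proposal is correct and matches the paper's intent exactly: the paper does not write out a separate proof for the nullnorm case but declares that, since nullnorms are special binary aggregation operators, the conclusions and proofs of the general $A$-vague proposition carry over verbatim, and your chain of transitivity/regularity estimates is precisely that general proof with $A$ replaced by $F$. Your explicit remark that commutativity and associativity of $F$ make the $n$-ary extension well defined (so that the four-argument expressions can be regrouped and merged two at a time) is a detail the paper leaves implicit, and it is a welcome clarification rather than a deviation.
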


Conversely, a monoid $(M,\circ)$ can be obtained from a vague monoid on $(M, \tilde{\circ})$.
\begin{proposition}
	Let $(M, \tilde{\circ})$ be a vague monoid with respect to an $F$-indistinguishability operator $E$ separating points. Then $(M, \circ)$ is a monoid where $x\circ y$ is the unique $z\in M$ such that $ \tilde{\circ}(x, y, z) = 1$.
\end{proposition}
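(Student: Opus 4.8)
The plan is to recognize this as the nullnorm instance of the general associative-aggregation result already established for $A$-vague monoids, and to carry out the same argument verbatim with $F$ in place of $A$, since every nullnorm is in particular a binary aggregation operator. First I would confirm that $\circ$ is well defined: because $E$ separates points and $\tilde{\circ}$ is an $F$-vague binary operation, the element $z$ furnished by condition (3) of the $F$-vague binary operation is unique (this is precisely the preceding uniqueness proposition for the nullnorm case). Hence setting $x\circ y$ to be the unique $z$ with $\tilde{\circ}(x,y,z)=1$ yields a genuine binary operation $\circ:M\times M\to M$.

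Next I would verify associativity. Put $d=y\circ z$, $q=x\circ y$, $m=x\circ(y\circ z)$, and $w=(x\circ y)\circ z$, so that each of $\tilde{\circ}(y,z,d)$, $\tilde{\circ}(x,d,m)$, $\tilde{\circ}(x,y,q)$, $\tilde{\circ}(q,z,w)$ equals $1$ by the definition of $\circ$. Feeding these into the associativity axiom of the $F$-vague monoid and using $F(1,1,1,1)=1$ gives
\begin{align*}
1=F(\tilde{\circ}(y,z,d),\tilde{\circ}(x,d,m),\tilde{\circ}(x,y,q),\tilde{\circ}(q,z,w))\le E(m,w),
\end{align*}
so $E\bigl(x\circ(y\circ z),(x\circ y)\circ z\bigr)=1$, and since $E$ separates points we conclude $x\circ(y\circ z)=(x\circ y)\circ z$.

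For the identity, let $e$ be the identity element of the $F$-vague monoid, so $\tilde{\circ}(x,e,x)=1$ and $\tilde{\circ}(e,x,x)=1$ for all $x\in M$. Since $x\circ e$ is by definition the unique element with $\tilde{\circ}(x,e,x\circ e)=1$, applying condition (2) of the $F$-vague binary operation to the two facts $\tilde{\circ}(x,e,x)=1$ and $\tilde{\circ}(x,e,x\circ e)=1$ yields $1=F(1,1)\le E(x,x\circ e)$, whence $x\circ e=x$; the symmetric computation gives $e\circ x=x$. Thus $e$ is an identity for $(M,\circ)$, and combined with the previous paragraph $(M,\circ)$ is a monoid.

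The only genuinely delicate points, and where I would be most careful, are the two places where the nullnorm structure enters. We need $F(1,1)=1$ and $F(1,1,1,1)=1$; both hold because the absorbing element $k$ satisfies $k\le 1$, so the boundary condition $F(1,x)=x$ for $x\ge k$ forces $F(1,1)=1$, and the $4$-ary value is obtained by iterating the binary $F$ through associativity. The other recurring move is the upgrade from $E(\cdot,\cdot)=1$ to literal equality, which is exactly the separation hypothesis on $E$. Everything beyond these observations is a routine transcription of the already-proven $A$-vague case.
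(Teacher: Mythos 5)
Your proof is correct and takes essentially the same route as the paper: the paper proves this result for general $A$-vague monoids (associativity via applying the monoid axiom to the four witnesses $d=y\circ z$, $m=x\circ(y\circ z)$, $q=x\circ y$, $w=(x\circ y)\circ z$; identity via condition (2) of the vague binary operation; both upgraded to equalities by separation) and then asserts the nullnorm case as a special instance, which is exactly the transcription you carry out. Your extra check that the nullnorm boundary condition $F(1,x)=x$ for $x\ge k$ forces $F(1,\dots,1)=1$ is a small but worthwhile detail that the paper leaves implicit.
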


\begin{definition}
		Let $(M, \tilde{\circ})$ be a vague monoid with respect to an $F$-indistinguishability operator $E$ separating points. Then $(M,\circ)$ is the monoid associated to the vague monoid $(M,\tilde{\circ})$.
\end{definition}

Further, if $F$ is a nullnorm and $(M,\circ)$ is a monoid, then there exist bijective maps between their $F$-vague monoids and regular $F$-indistinguishability operators and they can represent each other:


\begin{itemize}
	\item $\tilde{\circ}(x,y,z)=E(x\circ y,z);$
	\item $E(x,y)=\tilde{\circ}(x,e,y).$
\end{itemize}

Regarding commutativity, the connection between $(M, \tilde{\circ})$ and $(M, \circ)$ is given.
\begin{proposition}
	Let $E$ be an $F$-indistinguishability operator separating points, $(M, \tilde{\circ})$ an $F$-vague operation and $(M, \circ)$ its associated operation $(x \circ y = z$ if and only if $\tilde{\circ}(x, y, z) = 1)$. Then $(M, \tilde{\circ})$ is commutative if and only if
	$(M, \circ)$ is commutative.
\end{proposition}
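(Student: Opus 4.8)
The plan is to mirror the argument already given for the general $A$-vague monoid, since a nullnorm $F$ is just a particular binary aggregation operator; the only point requiring a nullnorm-specific remark is the identity $F(1,1)=1$, which holds because $1\ge k$ forces $F(1,1)=1$ by the boundary condition $F(1,x)=x$ for $x\ge k$. I would split the biconditional into its two implications and exploit throughout the bijective correspondence $\tilde{\circ}(x,y,z)=E(x\circ y,z)$ between the $F$-vague monoid and its associated monoid $(M,\circ)$.

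For the forward direction, assume $(M,\tilde{\circ})$ is commutative. Using the associated monoid I have $\tilde{\circ}(x,y,x\circ y)=1$ and $\tilde{\circ}(y,x,y\circ x)=1$. Instantiating the commutativity inequality $F(\tilde{\circ}(x,y,m),\tilde{\circ}(y,x,w))\le E(m,w)$ at $m=x\circ y$ and $w=y\circ x$ yields
$$1=F(1,1)\le E(x\circ y,\,y\circ x).$$
Hence $E(x\circ y,y\circ x)=1$, and because $E$ separates points I conclude $x\circ y=y\circ x$, i.e. $(M,\circ)$ is commutative.

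For the converse, assume $(M,\circ)$ is commutative, so $x\circ y=y\circ x$. Rewriting each vague value through the representation $\tilde{\circ}(\cdot,\cdot,\cdot)=E(\cdot\circ\cdot,\cdot)$ and using the symmetry of $E$, I would compute
\begin{align*}
F(\tilde{\circ}(x,y,m),\tilde{\circ}(y,x,w))
&=F(E(x\circ y,m),E(y\circ x,w))\\
&=F(E(m,x\circ y),E(x\circ y,w))\\
&\le E(m,w),
\end{align*}
where the last inequality is the transitivity axiom (3) of the $F$-indistinguishability operator $E$. This is exactly the commutativity condition for $(M,\tilde{\circ})$, completing the equivalence.

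The argument is essentially routine, being a specialization of the $A$-vague case; the two places to be careful are the forward direction, where one must invoke both defining facts of the associated monoid together with $F(1,1)=1$, and the transitivity step in the converse, which relies on rewriting $E(x\circ y,m)=E(m,x\circ y)$ so that the two arguments of $F$ share the common point $x\circ y$. This small rewriting via symmetry of $E$ is the only substantive obstacle.
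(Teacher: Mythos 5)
Your proof is correct and takes essentially the same route as the paper, which proves this statement once for general aggregation operators $A$ (forward direction via $\tilde{\circ}(x,y,x\circ y)=\tilde{\circ}(y,x,y\circ x)=1$ and separation of points; converse via the representation $\tilde{\circ}(x,y,z)=E(x\circ y,z)$, commutativity of $\circ$, and the transitivity-type inequality) and then declares the nullnorm case to follow verbatim. Your explicit verification that $F(1,1)=1$ is a small but worthwhile addition, since for a nullnorm this is not a stated axiom and must be deduced from $F(1,x)=x$ for $x\ge k$, whereas the paper's general argument gets it for free from the boundary condition on $A$.
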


\section{Concluding remarks}\label{Concluding remarks}
Comparing the difference between using $t$-norms and aggregation functions as defining criteria, we can get the following points.
\begin{itemize}
	\item Notice that $t$-norm is a special aggregation function, substituting aggregate functions for $t$-norm can further generalize related properties. In the meantime, special aggregation functions such as uninorm and nullnorm can be brought in to expand the scope of this research.
	
	\item Despite losing control of the boundary conditions, the diversity of aggregation functions allows us to have more flexible options for dealing with related problems.
\end{itemize}

In the future, we intend to consider adding some properties such as archimedean and strictness to make more interesting and meaningful conclusions.

\section*{Acknowledgements}
   This research was supported by the National Natural Science Foundation of China (Grant no. 12101500), the Chinese Universities Scientific Fund (Grant no. 2452018054) and the College Students' Innovation and Entrepreneurship Training Program (Grant no. S202010712009).


\end{document}